\newtheorem{theorem}{Theorem}[section]
\newtheorem{definition}[theorem]{Definition}
\newtheorem{proposition}[theorem]{Proposition}
\newtheorem{lemma}[theorem]{Lemma}
\newtheorem{remark}[theorem]{Remark}
\newtheorem{assumption}{Assumption}
\newcommand{\mql}{\overline{\mathbb{Q}}_{\ell}^\times}
\newcommand{\ql}{\overline{\mathbb{Q}}_{\ell}}
\newcommand{\Fqcl}{\overline{\mathbb{F}}_{q}}
\newcommand{\dlt}{\widecheck{T}(\ql)}
\newcommand{\dt}{\widecheck{T}}
\newcommand{\Z}{\mathbb{Z}}
\newcommand{\Q}{\mathbb{Q}}
\newcommand{\G}{\mathbb{G}}
\newcommand{\K}{\breve{K}}
\newcommand{\E}{\breve{E}}
\newcommand{\F}{\mathbb{F}}
\newcommand{\N}{\Z_{>0}}
\newcommand{\T}{\mathcal{T}}
\newcommand{\Fq}{\mathbb{F}_q}
\renewcommand{\O}{\breve{\mathcal{O}}}
\newcommand{\Ocal}{\mathcal{O}}
\newcommand{\xto}{\xrightarrow}
\newcommand{\chg}{X^\ast}
\newcommand{\cchg}{X_\ast}
\newcommand{\wGamma}{\widetilde{\Gamma}}
\DeclareMathOperator{\cor}{Cor}
\DeclareMathOperator{\Hom}{Hom}
\DeclareMathOperator{\gal}{Gal}
\DeclareMathOperator{\im}{Im}
\DeclareMathOperator{\res}{Res}
\DeclareMathOperator{\nm}{Nm}
\DeclareMathOperator{\ab}{ab}
\DeclareMathOperator{\re}{r}
\DeclareMathOperator{\tr}{tr}
\DeclareMathOperator{\Tr}{Tr}
\DeclareMathOperator{\p}{p}
\newcommand{\tH}{\widehat{H}}
\newcommand{\elp}{L^{+}_{\E}}
\newcommand{\lp}{L^{+}}
\newcommand{\lpf}{L}
\newcommand{\CS}{\operatorname{CS}}
\DeclareMathOperator{\ch}{ch}
\DeclareMathOperator{\Fr}{Fr}
\DeclareMathOperator{\sm}{sm}
\DeclareMathOperator{\tran}{t}
\DeclareMathOperator{\Lang}{L}
\DeclareMathOperator{\opp}{op}
\newcommand{\onto}{\twoheadrightarrow}
\DeclareMathOperator{\ind}{Ind}
\DeclareMathOperator{\sh}{Sh}
\DeclareMathOperator{\val}{val}
\DeclareMathOperator{\infl}{Infl}
\renewcommand{\subset}{\subseteq}
\renewcommand{\supset}{\supseteq}
\title{Character Sheaves on Tori over Local Fields}
 \author{Tanmay Deshpande and Saniya Wagh}
\date{}
\begin{document}

\maketitle
\begin{abstract}
Let $\K$ be a complete discrete valuation field with an algebraically closed residue field ${k}$ and ring of integers $\O$. Let $T$ be a torus defined over $\K$. Let $L^+T$ denote the connected commutative pro-algebraic group over ${k}$ obtained by applying the Greenberg functor to the connected N\'eron model of $T$ over $\O$. Following the work of Serre for the multiplicative group, we first compute the fundamental group $\pi_1(L^+T)$. We then study multiplicative local systems (or character sheaves) on $L^+T$ and establish a local Langlands correspondence for them. Namely, we construct a canonical isomorphism of abelian groups between the group of multiplicative local systems on $L^+T$ and inertial local Langlands parameters for $T$. Finally, we relate our results to the classical local Langlands correspondence for tori over local fields due to Langlands, via the sheaf-function correspondence.
\end{abstract}

\section{Introduction}
Suppose $K$ is a non-Archimedean local field, that is, a complete discrete valuation field with a finite residue field. Then the reciprocity map of local class field theory gives us a canonical identification 
\begin{equation}\label{rec}
\re:K^\times\xrightarrow{\cong} W_K^{\ab}\mbox{ of topological groups,}
\end{equation}
 where $W_K^{\ab}$ denotes the abelianization of the Weil group of $K$. This statement is essentially the local Langlands correspondence for the multiplicative group $\G_m$ defined over $K$. 
 
 More generally, let $T$ be any torus defined over the local field $K$ and let $\dt$ denote the dual torus defined over $\mathbb{C}$. Note that the dual torus $\dt$ comes equipped with an action of the Weil group $W_K$ factoring through a finite quotient $W_K\twoheadrightarrow \gal(E/K)$ where $E/K$ is a finite Galois extension over which the torus $T$ splits.  In \cite{Langlands1997}, Langlands established a canonical isomorphism 
 \begin{equation}\label{eq:LLCfortori}
     \phi:\Hom_{\sm}(T(K),\mathbb{C}^\times)\xrightarrow{\cong} H^1_{cts}(W_K,\dt(\mathbb{C}))
 \end{equation}
 between the abelian group of smooth characters of the locally profinite group $T(K)$ and the abelian group of local Langlands parameters for $T$, namely the group of continuous group cohomology classes $H^1_{cts}(W_K,\dt(\mathbb{C}))$. This is the local Langlands correspondence for general tori. Note that the local Langlands parameters can equivalently be thought of as $\dt(\mathbb{C})$-conjugacy classes of continuous group homomorphisms $$W_K\to { }^LT(\mathbb{C}):=\dt(\mathbb{C})\rtimes W_K$$
 compatible with the natural projection ${}^LT(\mathbb{C})\to W_K$. We also remark that both the sides in (\ref{eq:LLCfortori}) remain unchanged as abstract abelian groups, whether we take the analytic topology or the discrete topology on $\mathbb{C}$.

Our goal in this paper is to study a geometric analogue of the above correspondence. Let $\K$ be a complete discrete valuation field with an algebraically closed residue field $k$. Let $\Ocal_{\K}$ denote the ring of integers of $\K$, which we will often denote as simply $\O$. In this setting, Serre proved a geometric analogue of local class field theory in \cite{Serre1961}. Namely, we may consider the group of units $\O^\times$ as a commutative pro-algebraic group $L^+\G_m$ defined over the residue field $k$, using the Greenberg functor applied to the multiplicative group $\G_m$ defined over $\O$. Serre defined the fundamental groups $\pi_1$ of such objects in \cite{Serre1960} and proved in \cite{Serre1961} that there is a canonical isomorphism
\begin{equation}\label{th}
    \theta:\pi_1(L^+\G_m)\xrightarrow{\cong} \gal^{\ab}_{\K}\mbox{ of topological groups,}
\end{equation}
where $\gal^{\ab}_{\K}$ denotes the abelianization of the absolute Galois group of $\K$.

Now let $T$ be any torus defined over $\K$. Let $\T$ denote its connected N\'eron model defined over the ring of integers $\O$. We can then apply the Greenberg functor, or the positive loops functor, and define the connected commutative pro-algebraic group $L^+T:=L^+\T$ defined over the algebraically closed residue field $k$. By definition, we have $L^+T(k)=\ \T(\O)$, which is the maximal connected schematic subgroup of $T(\K)$. Note that we will follow the conventions from \cite{Serre1960, Serre1961} about pro-algebraic groups over $k$. In particular, we will work in the setting of inverse systems of perfect schemes and perfect group schemes over the residue field $k$. In this paper, we will only be working with commutative pro-algebraic groups. Note that the group $L^+T$ is the neutral connected component of the full loop group $LT$ which can be thought of as a commutative group ind-scheme. In our first main result, we compute the fundamental group (in the sense of Serre \cite{Serre1960}) of the connected commutative pro-algebraic group $L^+T$. 

Let $\E/\K$ be a finite Galois extension over which the torus $T$ splits. Hence we have an action of $\gal(\E/\K)$ on the character lattice $X^*(T)$ and the co-character lattice  $X_*(T)$. We have the short exact sequence of Galois groups
\begin{equation}
    1\to \gal_{\E}\to \gal_{\K}\to \gal(\E/\K)\to 1
\end{equation}
which gives us an action of $\gal(\E/\K)$ on $\gal_{\E}^{\ab}$ and a transfer isomorphism (see \cite{Serre1961} for details)\begin{equation}
\gal_{\K}^{\ab}\cong \left(\gal_{\E}^{\ab}\right)^{\gal(\E/\K)}\hspace{-7pt}\subset \gal_{\E}^{\ab}.
\end{equation}
\begin{theorem}\label{thm:main1}
    Let $T$ be any torus defined over $\K$ as above. Then in the notation above, the fundamental group (in the sense of \cite{Serre1960}) of the commutative pro-algebraic group $L^+T$ defined over $k$ is given  by a canonical isomorphism 
    \begin{equation}\label{eq:pi_1computation}
        \pi_1(L^+T) \cong \left(X_*(T)\otimes\gal_{\E}^{\ab}\right)^{\gal(\E/\K)}.
    \end{equation}
\end{theorem}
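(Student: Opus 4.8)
The plan is to reduce \eqref{eq:pi_1computation} to Serre's isomorphism $\pi_1(L^+\mathbb{G}_m)\cong\gal_{\breve{K}}^{\ab}$ of \eqref{th} by d\'evissage through induced tori. Both sides of \eqref{eq:pi_1computation} are additive functors of the torus $T$ (among those split by $\breve{E}$) and both are left exact, so it suffices to identify them, naturally, on induced tori, and then to propagate the identification along a coresolution of $T$ by induced tori.

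For the base cases: $T=\mathbb{G}_m$ (with $\breve{E}=\breve{K}$) is exactly \eqref{th}, and the right-hand side is $X_*(\mathbb{G}_m)\otimes\gal_{\breve{K}}^{\ab}=\gal_{\breve{K}}^{\ab}$; since $L^+$ and Serre's $\pi_1$ commute with finite products, this settles split tori. The essential case is that of an induced torus $P=\operatorname{Res}_{\breve{E}'/\breve{K}}\mathbb{G}_m^n$ with $\breve{K}\subseteq\breve{E}'\subseteq\breve{E}$. Here one uses that the connected N\'eron model of a Weil restriction is the Weil restriction of the connected N\'eron model, together with the identity $L^+_{\breve{K}}\circ\operatorname{Res}_{\mathcal{O}_{\breve{E}'}/\mathcal{O}_{\breve{K}}}=L^+_{\breve{E}'}$ for the Greenberg functor (valid because $\breve{E}'/\breve{K}$ is totally ramified, so $\breve{E}'$ still has residue field $k$); this gives $L^+P\cong(L^+_{\breve{E}'}\mathbb{G}_m)^n$ and hence $\pi_1(L^+P)\cong(\gal_{\breve{E}'}^{\ab})^n$ by \eqref{th} over $\breve{E}'$. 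On the other side $X_*(P)\otimes\gal_{\breve{E}}^{\ab}\cong\bigl(\mathbb{Z}[\Gamma/\Gamma']\otimes\gal_{\breve{E}}^{\ab}\bigr)^n$ with $\Gamma'=\gal(\breve{E}/\breve{E}')$, and Shapiro's lemma identifies its $\Gamma$-invariants with $\bigl((\gal_{\breve{E}}^{\ab})^{\Gamma'}\bigr)^n=(\gal_{\breve{E}'}^{\ab})^n$, the last equality being the transfer isomorphism for $\breve{E}/\breve{E}'$. That these identifications agree and are natural in $P$ reduces to the compatibility of Serre's $\theta$, of the Greenberg--Weil-restriction formalism, and of corestriction in local class field theory.

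Now let $T$ be a general torus split by $\breve{E}$. Choosing a surjection from a permutation $\gal(\breve{E}/\breve{K})$-lattice onto $X^*(T)$ and dualizing gives a short exact sequence $1\to T\to P^{0}\to S\to 1$ of tori with $P^{0}$ induced and $S$ a torus; iterating on $S$ produces a coresolution $0\to L^+T\to L^+P^{0}\to L^+P^{1}\to\cdots$ by $L^+$ of induced tori. I would then propagate: the right-hand functor is computed by this coresolution because $X_*(S)$ is $\mathbb{Z}$-free, so $(-)\otimes\gal_{\breve{E}}^{\ab}$ keeps $0\to X_*(T)\to X_*(P^{0})\to X_*(S)\to 0$ exact, and $(-)^{\gal(\breve{E}/\breve{K})}$ is left exact. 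For the left-hand side one needs, first, exactness of the connected N\'eron model functor on such a sequence of tori over the strictly henselian $\mathcal{O}_{\breve{K}}$ — which reduces to the vanishing of $H^1$ of smooth connected $\mathcal{O}_{\breve{K}}$-group schemes, hence to $H^1(k,-)=0$ over the algebraically closed residue field — followed by the exact Greenberg functor; and second, that $0\to\pi_1(L^+T)\to\pi_1(L^+P^{0})\to\pi_1(L^+S)\to 0$ is exact, the surjectivity being automatic for a surjection of connected pro-algebraic groups and the injectivity following from the fact that the pro-algebraic groups in play have homological dimension one in Serre's category — a reflection of $\operatorname{cd}(\gal_{\breve{K}})=\operatorname{cd}(\gal_{\breve{E}'})=1$, the residue field being algebraically closed. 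Feeding the base case through the coresolution then yields the canonical isomorphism \eqref{eq:pi_1computation}.

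The step I expect to be the real obstacle is this exactness bookkeeping. The connected N\'eron model functor is not exact on arbitrary exact sequences of tori, so one must verify exactness on the particular sequences coming from an induced-torus resolution — exploiting that the induced tori in the resolution acquire good reduction over $\breve{E}$, which confines the possible failure — or else replace exactness by a comparison up to isogeny, which $\pi_1$ detects controllably since isogenies become invertible after profinite completion. Securing the homological-dimension-one property for \emph{all} the pro-algebraic groups that occur, and not just for $L^+\mathbb{G}_m$, is the other genuinely substantive point; granted these, compatibility with $\theta$ and with the transfer isomorphism is forced by the construction.
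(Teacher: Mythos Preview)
Your d\'evissage through induced tori is a plausible strategy, but it is considerably more indirect than the paper's argument, and the obstacles you yourself flag---exactness of the connected N\'eron model functor along your coresolution, and the homological-dimension input---are genuine and would require real work. The paper avoids all of this.

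The paper's proof is essentially three steps. First, a lemma: if a finite group $\Gamma$ acts on a commutative pro-algebraic group $G$, then $\pi_1(G^\Gamma) = \pi_1(G)^\Gamma$. This is immediate from the left-exactness of $\pi_1$ (Serre proved $\pi_i = 0$ for $i \geq 2$), applied to the exact sequence $0 \to G^\Gamma \to C^0(\Gamma, G) \to C^1(\Gamma, G)$ of pro-algebraic groups. Second, one observes that $\lp T$ is the identity component of $(\elp T)^{\gal(\E/\K)}$: indeed $\mathcal{T}(\O) \subset \mathcal{T}(\Ocal_{\E})^{\gal(\E/\K)}$ with finite index, by finiteness of Tate cohomology. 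Since $\pi_1$ is insensitive to $\pi_0$, the lemma gives $\pi_1(\lp T) = \pi_1(\elp T)^{\gal(\E/\K)}$. Third, over $\E$ the torus $T$ is split, so its connected N\'eron model is $\cchg(T) \otimes \G_{m,\Ocal_{\E}}$, hence $\pi_1(\elp T) = \cchg(T) \otimes \pi_1(\elp \G_m) \cong \cchg(T) \otimes \gal_{\E}^{\ab}$ by Serre's theorem over $\E$. Taking $\gal(\E/\K)$-invariants finishes.

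The contrast: the paper goes \emph{up} to the splitting field $\E$, where the torus is a product of copies of $\G_m$, and then descends via the single clean identity $\pi_1(G^\Gamma) = \pi_1(G)^\Gamma$. You instead stay over $\K$ and resolve $T$ by induced tori, which forces you to control exactness of the N\'eron model and Greenberg functors on short exact sequences of tori---a known delicate point that the paper's route never touches. Your approach would also need the induced-torus base case and its compatibility with transfer, which the paper does establish (as part of the refined functoriality statement), but only \emph{after} the main isomorphism is already in hand by the direct argument.
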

A priori, the right hand side above seems to depend on the choice of the finite Galois extension $\E/\K$ which splits $T$. However, it is easy to see using the transfer isomorphisms that the right hand side of (\ref{eq:pi_1computation}) is independent of the choice of $\E$. A more precise form of this result is stated and proved as Theorem \ref{thm:fundamentalgroup} in \S\ref{sec:pfmain1}.

We now fix a prime number $\ell$ invertible in the residue field $k$. Our next goal is to study the multiplicative $\ql$-local systems, or character sheaves, on $L^+T$ and to establish a Langlands type correspondence for them. Multiplicative $\ql$-local systems on a connected commutative  pro-algebraic group $G$ over $k$ are in a natural bijection with continuous characters (see Section \ref{sec:CSonCCG} for more)
\[\chi:\pi_1(G)\to \mql\] 
of the Serre fundamental group $\pi_1(G)$, where we work with the $\ell$-adic topology on $\ql^\times$. 
We will use either of the notations below: \begin{equation}\label{eq:CSG}
    \CS(G)=\Hom_{\ell\text{-adic}}(\pi_1(G),\ql^\times),
\end{equation} to denote the abelian group of multiplicative $\ql$-local systems on any connected commutative pro-algebraic group $G$ defined over any algebraically closed field $k$. In particular, we are interested in studying the abelian group $\CS(L^+T)$.

Let us now describe the inertial $\ql$-Langlands parameters which form the other side of the Langlands correspondence for character sheaves on $L^+T$. Let $\dt$ denote the (split) dual torus defined over $\Z$. The absolute Galois group $\gal_{\K}$ acts on $\dt$ via the finite quotient $\gal_{\K}\twoheadrightarrow\gal(\E/\K)$. Consider the semi-direct product 
\begin{equation}
    {}^LT:=\dt\rtimes\gal_{\K}
\end{equation} 
as a group scheme over $\Z$. We let $\Hom_{\gal_{\K},\ell\text{-adic}}(\gal_{\K},{ }^LT(\ql))$ denote the set of continuous group homomorphisms $\gal_{\K}\to { }^LT(\ql)$ (in the $\ell$-adic topology) which are compatible with the natural projection ${}^LT\to \gal_{\K}$ as in the commutative diagram below:
\begin{equation}\label{eq:illpcommute}
    \xymatrix{
\gal_{\K}\ar@{=}[rd]\ar[rr] & & {}^LT(\ql).\ar@{->>}[ld]\\
 & \gal_{\K} &
 }
\end{equation}

We have an action of $\dt(\ql)$ on $\Hom_{\gal_{\K},\ell\text{-adic}}(\gal_{\K},{ }^LT(\ql))$ by conjugation and we define inertial local $\ql$-Langlands parameters to be the quotient $\Hom_{\gal_{\K},\ell\text{-adic}}\left.(\gal_{\K},{ }^LT(\ql))\middle/\dt(\ql)\right.$. We may also describe this set as the set of equivalence classes of continuous 1-cocycles $\gal_{\K}\to \dt(\ql)$, which we denote as $H^1_{\ell\text{-adic}}(\gal_{\K}, \dt(\ql)).$ It is then clear that these inertial $\ql$-Langlands parameters form an abelian group. We can now state our next main result which establishes an inertial local Langlands correspondence for $\ql$-character sheaves on $L^+T$:

\begin{theorem}\label{thm:main2}
    For a torus $T$ defined over $\K$ as  before, let $\CS^+(T)$ denote the abelian group of multiplicative $\ql$-local systems (or character sheaves) on the connected commutative pro-algebraic group $L^+T$. Then we have a canonical isomorphism of abelian groups
    \begin{equation}
        \CS^+(T)\cong H^1_{\ell\text{-adic}}(\gal_{\K}, \dt(\ql)).
    \end{equation}
\end{theorem}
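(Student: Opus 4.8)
The plan is to feed the computation $\pi_1(\lp T)\cong\big(X_*(T)\otimes\gal_{\E}^{\ab}\big)^{\gal(\E/\K)}$ of Theorem~\ref{thm:main1} into a Hochschild--Serre analysis of the Galois side, and to bridge the remaining gap using Tate--Nakayama duality for the class formation $\{\gal_F^{\ab}\}$ provided by Serre's reciprocity \eqref{th}. Write $G=\gal(\E/\K)$ and set $A:=X_*(T)\otimes\gal_{\E}^{\ab}$, a profinite $G$-module for the diagonal action. By Theorem~\ref{thm:main1}, $\CS^+(T)=\Hom_{\ql}(\pi_1(\lp T),\ql^\times)=\Hom_{\ql}(A^{G},\ql^\times)$. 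On the other side $\dt(\ql)=X^*(T)\otimes\ql^\times=\Hom_{\Z}(X_*(T),\ql^\times)$ is a trivial $\gal_{\E}$-module, and since $\ell$ is invertible in $k$ one has $\operatorname{cd}_{\ell}(\gal_{\K})\le1$; as $\dt(\ql)$ is a divisible $\gal_{\K}$-module this forces $H^2_{cts}(\gal_{\K},\dt(\ql))=0$, so the Hochschild--Serre spectral sequence for $1\to\gal_{\E}\to\gal_{\K}\to G\to1$ degenerates to a four-term exact sequence
\[
0\to H^1(G,\dt(\ql))\xrightarrow{\infl}H^1_{\ql}(\gal_{\K},\dt(\ql))\xrightarrow{\res}H^1_{\ql}(\gal_{\E},\dt(\ql))^{G}\xrightarrow{d_2}H^2(G,\dt(\ql))\to0 .
\]

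The next step is to identify the three outer terms purely in terms of $A$. Since $\gal_{\E}$ acts trivially, tensor--hom adjunction gives $H^1_{\ql}(\gal_{\E},\dt(\ql))^{G}=\Hom_{\ql}\big(\gal_{\E}^{\ab},\Hom_{\Z}(X_*(T),\ql^\times)\big)^{G}\cong\Hom_{\ql}(A,\ql^\times)^{G}=\Hom_{\ql}(A_{G},\ql^\times)$, the $G$-coinvariants appearing because a $G$-invariant homomorphism into the trivial module $\ql^\times$ must kill the augmentation submodule. For $H^{i}(G,\dt(\ql))$ with $i=1,2$: since $\ql^\times$ is divisible with uniquely divisible torsion-free quotient $\ql^\times/\mu_{\infty}$, we get $H^{i}(G,\dt(\ql))\cong H^{i+1}(G,X^*(T))$; the standard duality $\widehat{H}^{j}(G,X^*(T))\cong\widehat{H}^{-j}(G,X_*(T))^{\vee}$ between the Tate cohomologies of the dual $G$-lattices rewrites this as $\widehat{H}^{-i-1}(G,X_*(T))^{\vee}$; and Tate--Nakayama for the class formation $\{\gal_F^{\ab}\}$ --- cup product with the fundamental class $u_{\E/\K}\in H^2(G,\gal_{\E}^{\ab})$ --- gives $\widehat{H}^{j}(G,X_*(T))\cong\widehat{H}^{j+2}(G,A)$. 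Combining, $H^1(G,\dt(\ql))\cong\Hom(\widehat{H}^{0}(G,A),\ql^\times)$ and $H^2(G,\dt(\ql))\cong\Hom(\widehat{H}^{-1}(G,A),\ql^\times)$.

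Now compare the Hochschild--Serre sequence with the one obtained by applying $\Hom_{\ql}(-,\ql^\times)$ to the tautological exact sequence
\[
0\to\widehat{H}^{-1}(G,A)\to A_{G}\xrightarrow{N}A^{G}\to\widehat{H}^{0}(G,A)\to0
\]
relating the norm map $N$ to Tate cohomology; this yields $0\to\Hom(\widehat{H}^{0}(G,A),\ql^\times)\to\CS^+(T)\xrightarrow{N^{\ast}}\Hom_{\ql}(A_{G},\ql^\times)\to\Hom(\widehat{H}^{-1}(G,A),\ql^\times)\to0$. By the previous step the two outer terms agree with those of the Hochschild--Serre sequence and the middle-right term agrees on the nose, so it suffices to produce a commutative ladder between the two four-term sequences; the five lemma then delivers the desired canonical isomorphism $\CS^+(T)\cong H^1_{\ql}(\gal_{\K},\dt(\ql))$.

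Producing that ladder --- equivalently a canonical map $H^1_{\ql}(\gal_{\K},\dt(\ql))\to\CS^+(T)$ lifting $\res$ through $N^{\ast}$ --- is where I expect the essential difficulty. The naive recipe (restrict a cocycle to $\gal_{\E}$, view the resulting homomorphism $A\to\ql^\times$, restrict it along $A^{G}=\pi_1(\lp T)\hookrightarrow A$) does \emph{not} make the square with $N^{\ast}$ commute; it differs from the map dual to $N$ by a factor of $|G|$. The correct map has to be built from the extension class $\varepsilon_{\E/\K}\in H^2(G,\gal_{\E}^{\ab})$ of $1\to\gal_{\E}^{\ab}\to\gal_{\K}/\overline{[\gal_{\E},\gal_{\E}]}\to G\to1$: the key point is that, by Serre's geometric local class field theory, $\varepsilon_{\E/\K}$ is (up to sign) the fundamental class $u_{\E/\K}$, whence the transgression $d_2$ equals cup product with it, and Tate--Nakayama then identifies $d_2$ with the dual of the inclusion $\widehat{H}^{-1}(G,A)\hookrightarrow A_{G}$; this forces $\ker d_2=\im N^{\ast}$ and pins the lift down canonically. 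Establishing this compatibility of Serre's reciprocity with the class-formation structure, together with the attendant topological bookkeeping ($\ell$-adic continuous cohomology, exactness of $\Hom_{\ql}(-,\ql^\times)$ on profinite abelian groups, the vanishing $H^2_{cts}(\gal_{\K},\dt(\ql))=0$), is the main obstacle. As an independent check --- and as an alternative route that sidesteps part of this --- I would first verify the statement for induced tori $T=\operatorname{Res}_{\breve{E}'/\K}\G_m$, where $\pi_1(\lp T)\cong\gal_{\breve{E}'}^{\ab}$ and both sides reduce to Serre's theorem via Shapiro's lemma, and then pass to a general $T$ through a resolution of the $G$-lattice $X^*(T)$ by permutation lattices (dually, a resolution of $T$ by quasi-trivial tori), using that $\pi_1(\cdot)$, $\lp(\cdot)$ and the relevant Galois cohomology functors are exact enough on the short exact sequences at hand --- again thanks to $\operatorname{cd}_{\ell}(\gal_{\K})\le1$.
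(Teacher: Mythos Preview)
Your overall strategy --- inflation--restriction for $1\to\gal_{\E}\to\gal_{\K}\to\Gamma\to1$ on the Galois side, the Tate--cohomology four-term sequence for the norm on the $\CS^+$ side, Tate--Nakayama on the outer columns, then a five-lemma ladder --- is exactly the paper's. Proposition~\ref{prop:cor} sets up precisely your diagram (phrased with $\Hom(\gal_{\E}^{\ab},\dt(\ql))$ rather than your dual formulation via $A=X_*(T)\otimes\gal_{\E}^{\ab}$), and Lemma~\ref{lemma:tH iso} is your Tate--Nakayama identification of the outer terms.

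The gap you correctly flag is the one the paper closes: the missing middle arrow is the \emph{corestriction} map
\[
\cor:\Hom_{\ql}(\gal_{\E}^{\ab},\dt(\ql))_{\Gamma}\longrightarrow H^1_{\ql}(\mathscr{G}_{\E/\K},\dt(\ql)),\qquad \cor(\alpha)(g)=\sum_{\gamma\in\Gamma}g_{\gamma}\alpha(g_{\gamma}^{-1}gg_{\gamma'}),
\]
for coset representatives $g_{\gamma}$. With this concrete map in hand the three squares are checked directly: the right square anti-commutes because the transgression $d_2$ is evaluation at the extension class, which by Remark~\ref{rk:canonicalclass} is the fundamental class $u_{\E/\K}$ (this is your observation $\varepsilon_{\E/\K}=u_{\E/\K}$); the middle square commutes because $\res\circ\cor=\nm$; and the left square is handled by a dimension-shifting cup-product computation. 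Your ``naive recipe'' (restrict to $\gal_{\E}$, then to $A^{\Gamma}$) goes the wrong way; the paper builds the map from $\CS^+(T)$ \emph{to} $H^1_{\ql}$ via $\cor$, and it is corestriction that makes the norm appear. Without a specified middle arrow your argument only shows the two four-term sequences have isomorphic graded pieces, which is not enough for a \emph{canonical}, functorial isomorphism.

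Two smaller points. First, your vanishing $H^2_{cts}(\gal_{\K},\dt(\ql))=0$ from $\operatorname{cd}_{\ell}(\gal_{\K})\le1$ is not justified as written: $\dt(\ql)$ is not $\ell$-torsion, and divisibility alone does not kill higher cohomology (already for finite $G$ one has $H^1(G,\ql^\times)\neq0$). Fortunately you do not need it: neither row of the ladder has to terminate in a surjection onto the $H^2$-term, since the diagram chase only uses exactness at the inner three spots together with the outer isomorphisms. Second, your alternative route via resolutions by induced tori is not how the paper proves existence; induced tori enter only to normalise and characterise the isomorphism uniquely (Theorem~\ref{thm:inertial LLC}), after $\varphi_T$ has already been constructed via $\cor$.
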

We will state and prove a more refined version of this result in \S\ref{sec:pfmain2}. Once we have this inertial local Langlands correspondence for character sheaves on $L^+T$, we will relate it to the classical correspondence (\ref{eq:LLCfortori}) established by Langlands via the sheaf-function correspondence in \S\ref{sec:pfmain3}. We first recall the sheaf-function correspondence for connected commutative algebraic and pro-algebraic groups defined over finite fields due to Lusztig, cf. \cite[\S 5]{Lus06} and \cite[\S 1.5]{BD06}. We also refer to \S\ref{sec:CSonCCG} below for a more detailed exposition.

Let $k=\Fqcl$, where $q$ is a power of some prime number $p$. Let $G$ be a connected commutative algebraic group over $k$ equipped with an $\Fq$-structure defined by an $\Fq$-Frobenius endomorphism $\Fr:G\to G$. Then the Frobenius endomorphism permutes the set $\CS(G)$ of multiplicative local systems on $G$, and the sheaf-function correspondence gives us a canonical identification
\begin{equation}
    \Hom(G(\Fq),\ql^\times)\cong \CS(G)^{\Fr}
\end{equation} between the Pontryagin dual of the finite abelian group $G(\Fq)$ and $\Fr$-invariant multiplicative local systems on $G$. Using the same argument from \cite[\S 1.5]{BD06} we can extend the above result to connected commutative pro-algebraic groups $G$ defined over $\Fq$ and obtain a canonical identification of abelian groups
\begin{equation}
    \Hom_{\ell\text{-adic}}(G(\Fq),\ql^\times)\cong \CS(G)^{\Fr}
\end{equation} between the abelian groups of continuous $\ql$-characters of the commutative profinite group $G(\Fq)$ and $\Fr$-invariant character sheaves on $G$. However, in this paper, we will only need to deal with connected commutative pro-algebraic groups that satisfy some further simplifying assumptions. We will state and prove a more refined version of the above statement for this special class of pro-algebraic groups in \S\ref{sec:CSonCCG}.

To relate Theorem \ref{thm:main2} to (\ref{eq:LLCfortori}), we return to the setting of a torus $T$ defined over a non-Archimedean local field $K$ with ring of integers $\Ocal$ and a finite residue field $\Fq$. Let $\mathcal{T}$ denote the connected N\'eron $\Ocal$-model for $T$. Let $\K$ denote the completion of the maximal unramified extension of $K$ and $\O$ its ring of integers. The residue field of $\K$ is then the algebraically closed field $k=\Fqcl$. We have the short exact sequence $1\to \gal_{\K}\to W_K\to \Z\to 1$ of groups. The absolute Galois group $\gal_{\K}$  of $\K$ is often called the inertia subgroup of $W_K$. We have the Frobenius automorphism $\Fr=\Fr_K:\K\to \K$, which is a generator of the quotient $W_K/{\gal_{\K}}=\Z$.

By a slight abuse of notation, we continue to denote by $T$ the base change of the torus $T$ to $\K$. Moreover, the connected N\'eron $\O$-model for $T$ is simply the base change to $\O$ of the connected N\'eron $\Ocal$-model $\mathcal{T}$. Again, we continue to denote by $\mathcal{T}$  the base change of $\mathcal{T}$ to $\O$. We see that in the current setting, the positive loop group $L^+T$, which is defined over $k$, in fact has an $\Fq$-structure given by an $\Fq$-Frobenius endomorphism $\Fr: L^+T \to L^+T$ since the N\'eron model $\mathcal{T}$ is defined over $\Ocal$. Note that we have 
\[\mathcal{T}(\O)=L^+T(k)\mbox{ and }\mathcal{T}(\Ocal)=L^+T(\Fq).\]

The Kottwitz homomorphism for $T$ (cf. \cite{Kottwitz[1997]}) gives us the following two short exact sequences:
\begin{equation}
     0 \to \mathcal{T}(\O) \to T(\K) \to X_*(T)_{\gal_{\K}} \to 0
\end{equation} and taking Frobenius fixed points (and using Lang's Theorem \ref{thm:langprofin})
\begin{equation}\label{eq:kottwizforK}
     0 \to \mathcal{T}(\Ocal) \to T(K) \to \left(X_*(T)_{\gal_{\K}}\right)^{\Fr} \to 0.
\end{equation} We refer to \S\ref{sec:kottwitz} for a detailed discussion.

In the local Langlands correspondence for $T$, we are interested in parametrizing the smooth 1-dimensional characters
\[\chi:T(K)\to \ql^\times.\]
However since $T(K)/\mathcal{T}(\Ocal)$ is discrete and $\mathcal{T}(\Ocal)$ is profinite with a finite $\ell$-primary part, the smooth characters in fact coincide with $\ell$-adic (i.e. continuous with respect to the $\ell$-adic topology on $\ql^\times$) characters  for the groups $T(K),\mathcal{T}(\Ocal)$. We also obtain (see \S\ref{sec:pfmain3} for details) the dual exact sequence from (\ref{eq:kottwizforK})
\begin{equation}\label{eq:dualkottwitz}
    0\to\Hom((\cchg(T)_{\gal_{\K}})^{\Fr},\mql)\to\Hom_{\sm}(T(K),\mql)\to \Hom_{\sm}(\mathcal{T}(\Ocal),\mql)\to 0.
\end{equation}
Applying the results on the sheaf-function correspondence from \S\ref{sec:CSonCCG} to the connected commutative pro-algebraic group $L^+T$ and using Theorem \ref{thm:main2}, we obtain the isomorphism
\begin{equation}\label{eq:psidef}
    \psi:\Hom_{\sm}(\mathcal{T}(\mathcal{O}),\ql^\times)=\Hom_{\ell\text{-adic}}(L^+T(\Fq),\ql^\times)\xrightarrow{\cong} \CS^+(T)^{\Fr}\xrightarrow{\cong} H^1_{\ell\text{-adic}}(\gal_{\K}, \dt(\ql))^{\Fr}.
\end{equation}

For the other side of the Langlands correspondence, we have the dual torus $\dt$ (defined over $\Z$) equipped with an action of the Weil group $W_K$ which factors through a finite quotient $W_K\onto \gal(E/K)$, where $E/K$ is a finite Galois extension over which $T$ splits. We have the short exact sequence of topological groups
\[
1\to \gal_{\K} \to W_K\to \Z\to 1
\]
with the above action of $W_K$ on $\dt(\ql)$ and hence we have the associated inflation-restriction exact sequence. Our final main result below describes the relationship between Theorem \ref{thm:main2} and the classical local Langlands correspondence (\ref{eq:LLCfortori}) for tori.

\begin{theorem}\label{thm:main3}
Let $T$ be a torus defined over a non-Archimedean local field $K$. Then in the notation used above, we have the following two isomorphic short exact sequences of abelian groups fitting into a commutative diagram
\[
    \xymatrix{
0\ar[r]&\Hom((\cchg(T)_{\gal_{\K}})^{\Fr},\mql)\ar[r]\ar[d]_\cong&\Hom_{\sm}(T(K),\mql)\ar[r]^\res\ar[d]_\cong^\phi & \Hom_{\sm}(\mathcal{T}(\Ocal),\mql)\ar[r]\ar[d]_\cong^\psi & 0\\
  0\ar[r]& H^1(\Z,\dlt^{\gal_{\K}})\ar[r]^{\inf} & H_{\ql}^{1}(W_K,\dlt)\ar[r]^\res & H_{\ql}^{1}(\gal_{\K},\dlt)^{\Fr}\ar[r] &  0,
}
\] where the top short exact sequence is (\ref{eq:dualkottwitz}), i.e. the smooth dual  of the Kottwitz short exact sequence (\ref{eq:kottwizforK}), the bottom short exact sequence comes from the inflation-restriction sequence applied to the action of $W_K$ on the dual torus $\dlt$, the left vertical arrow is a canonical identification, the middle vertical isomorphism $\phi$ is the $\ell$-adic local Langlands correspondence for $T$ and the rightmost vertical isomorphism $\psi$ is (\ref{eq:psidef}) obtained from Theorem \ref{thm:main2} and the sheaf-function correspondence.
\end{theorem}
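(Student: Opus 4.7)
The plan is to verify that both rows are short exact, construct the leftmost vertical identification, and then show the two squares commute. The top row is (\ref{eq:dualkottwitz}), obtained by applying $\Hom_{\sm}(-,\mql)$ to the Kottwitz sequence (\ref{eq:kottwizforK}); this is short exact because $T(K)/\mathcal{T}(\Ocal)$ is a finitely generated abelian group and $\mql$ is divisible. For the bottom row, I would apply the Hochschild--Serre spectral sequence in continuous $\ell$-adic cohomology to $1 \to \gal_{\K} \to W_K \to \Z \to 1$ with coefficients in $\dt(\ql)$; since $\Z$ has cohomological dimension one, only two edge terms survive and we obtain the inflation--restriction short exact sequence, with the image of restriction landing in $\Fr$-fixed classes by functoriality.

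The leftmost vertical identification comes from a Pontryagin-style duality. Since $\cchg(T)_{\gal_{\K}}$ is a finitely generated $\Z[\langle\Fr\rangle]$-module, we have
\[\dt(\ql)^{\gal_{\K}} = \Hom(\cchg(T)_{\gal_{\K}}, \ql^\times),\]
and $H^1(\Z, A) \cong A/(\Fr-1)A$ for any $\Z$-module $A$. Applying $\Hom(-,\ql^\times)$ to the tautological four-term sequence $0 \to A^\Fr \to A \xrightarrow{\Fr-1} A \to A_\Fr \to 0$ with $A = \cchg(T)_{\gal_{\K}}$ yields the canonical isomorphism $H^1(\Z, \dt(\ql)^{\gal_{\K}}) \cong \Hom((\cchg(T)_{\gal_{\K}})^\Fr, \mql)$, which supplies the desired leftmost vertical arrow.

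For the right square my approach is dévissage. Both $\phi$ and $\psi$ are functorial in $T$ and compatible with Weil restriction $\res_{K'/K}$ for finite separable $K'/K$: on the smooth-character side via Frobenius reciprocity, on the Weil-group side via induction of $L$-groups, and on the geometric side because the Greenberg/positive-loop functor commutes with Weil restriction. Using a standard resolution of $T$ by induced tori, one reduces the commutativity to the case $T = \G_m$ over a finite extension of $K$. In that case, $\phi$ is the local reciprocity map $\re$ of (\ref{rec}), while $\psi$ factors through Serre's geometric reciprocity $\theta$ of (\ref{th}) via Theorem \ref{thm:main2} and the sheaf-function correspondence. The compatibility of $\re$ and $\theta$ under the inclusion $\gal_{\K}^{\ab} \hookrightarrow W_K^{\ab}$ -- a classical fact from \cite{Serre1961} -- then gives the commutativity.

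For the left square, a character $\eta \in \Hom((\cchg(T)_{\gal_{\K}})^\Fr, \mql)$ pulls back along the Kottwitz map to a smooth character of $T(K)$ trivial on $\mathcal{T}(\Ocal)$. Such ``unramified'' characters correspond under $\phi$ exactly to classes in $H^1(\Z,\dt(\ql)^{\gal_{\K}})$ inflated to $H^1_{\ql}(W_K, \dt(\ql))$, as follows by the same dévissage to $\G_m$. Tracking through the Pontryagin duality above then shows this correspondence coincides with the leftmost vertical arrow followed by $\inf$. The principal obstacle is the right square: matching a cocycle produced by the geometric construction underlying Theorem \ref{thm:main2} with one produced by Langlands' arithmetic correspondence. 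Both ingredients I have invoked -- Weil-restriction compatibility of Theorem \ref{thm:main2} and its specialization to Serre's $\theta$ in the $\G_m$ case -- should be visible from the proof of Theorem \ref{thm:main2} itself, so the remaining work is a careful verification of those two compatibilities.
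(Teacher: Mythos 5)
Your overall organization matches the paper's: verify the two rows, build the left identification, show the squares commute. The left identification via Pontryagin duality is exactly the paper's Lemma \ref{lemma:shortexactiden}. However, for the two squares your route is genuinely different from the paper's, and it has unfilled gaps.

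\emph{What the paper does for the right square.} The paper does not reduce to $\G_m$ by a resolution argument. Instead it proves Proposition \ref{propo:LLC} directly, by unwinding both $\phi$ and $\psi$ through the same corestriction/Shapiro construction relative to a chosen splitting field $E/K$: $\phi$ is $\cor_{\wGamma}\circ \re_E^{-1}\circ(\text{Shapiro})$ and $\psi$ is $\cor_{\Gamma}\circ\theta_{\E}^{-1}\circ(\text{sheaf--function map }\p_E)$. The comparison then reduces to three explicit commutative squares, whose central ingredient is Lemma \ref{lemma:diag2commutativity} together with the diagram \eqref{diagram:Serre reciprocity}. Crucially, that diagram is \emph{not} just ``$\theta$ and $\re$ agree on $\gal_{\K}^{\ab}\hookrightarrow W_K^{\ab}$'': it is the statement that $\re\circ\p_K=\theta$ as maps $\pi_1(\lp\G_m)\to\gal_{\K}^{\ab}\to\gal_K^{\ab}$, where $\p_K:\pi_1(\lp\G_m)\onto\Ocal_K^\times$ is the sheaf--function-correspondence surjection of Lemma \ref{lemma:pi1to fqpts}. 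The paper proves this via Dwork's explicit formula for the reciprocity map (Lemma \ref{lemma:Dwork}) and the identification of $\p_K$ with a norm map. Your plan elides $\p_K$ and treats the compatibility as an off-the-shelf fact from \cite{Serre1961}; this is the single ingredient that actually requires work, and it sits \emph{between} $\theta$ and $\re$, so it cannot be skipped.

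\emph{The d\'evissage gap.} Your reduction ``using a standard resolution of $T$ by induced tori'' is plausible but is the hard part, and you have not carried it out. Concretely: choosing $0\to T_2\to T_1\to T\to 0$ with $T_1,T_2$ induced, you must propagate commutativity of the right square from $T_1,T_2$ to $T$. The two horizontal maps in the square are not components of a short exact sequence in $T$, so this is not an immediate five-lemma argument; you would either need a diagram chase through the long exact sequences of all four corners (and exactness of $T\mapsto \Hom_{\sm}(\T(\Ocal),\mql)$ and $T\mapsto H^1_{\ql}(\gal_\K,\dlt)^\Fr$ in the relevant degrees must be checked), or you would need to invoke uniqueness of natural transformations \`a la Yu/Theorem \ref{thm:inertial LLC} on \emph{both} sides and verify that $\res\circ\phi$ and $\psi\circ\res$ satisfy the same uniqueness criteria. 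Either route is real work; the paper avoids it entirely by the direct corestriction computation. For the left square the paper again computes directly: it checks that $\wGamma$ acts trivially on $\im(\widehat\kappa(f))$ so that the corestriction formula collapses to inflation. Your d\'evissage sketch for the left square has the same status.

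\emph{A smaller point.} You assert the bottom row is short exact by Hochschild--Serre for $\Z$. The paper is careful not to: \eqref{eq:inf-res} is only exact through the restriction term, and surjectivity of restriction is obtained \emph{a posteriori} from the commutative diagram and the fact that the other three maps are isomorphisms. In continuous $\ell$-adic cohomology the Hochschild--Serre machinery needs justification; while one can likely argue by hand for $W_K\to\Z$, stating short exactness upfront is an unsupported step that the paper deliberately sidesteps.

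In summary, the plan correctly identifies the ingredients, but the load-bearing steps (the precise Serre--Dwork compatibility through $\p_K$, and the propagation of commutativity under d\'evissage) are exactly the parts you have left as sketches; the paper instead proves these by direct computation.
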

Note that the commutativity of the square on the right gives the compatibility of the inertial local Langlands correspondence for character sheaves on $L^+T$ (i.e. Theorem \ref{thm:main2}) with the classical correspondence due to Langlands via the sheaf-function correspondence. This result is proved in \S\ref{sec:pfmain3}.\\

\noindent{\textit{\bf Related works.}} In the case where the residue field $k$ is of positive characteristic, some closely related questions have been studied previously in \cite{Beg80} (mixed characteristic case) and \cite[\S8,10]{Suz20}. In \cite[\S8]{Suz20}, the author studies duality in the derived category of sheaves on a certain category (site) of fields and applies it to the study of tori over local fields with a perfect residue field of positive characteristic. Our approach is more elementary and classical, and also allowing $k$ to be of characteristic 0. Another difference is that we work with group of units $\ql^\times$ with the $\ell$-adic topology and not $\Q/\Z$ with the discrete topology (see also Remark \ref{rk:smoothvsladic} below). This is because we are primarily interested in studying multiplicative $\ql$-local systems on $L^+T$. We note that the spaces parametrizing multiplicative $\ql$-local systems on (connected) unipotent algebraic groups over $k$ (cf. \cite{BD06}) and on tori over $k$ (cf. \cite{GL}) behave quite differently. In the former case, it is a perfect commutative unipotent group scheme over $k$ known as the Serre dual of the connected unipotent algebraic group and leads to the notion of Fourier-Deligne transform of $\ell$-adic sheaves. On the contrary, in the case of tori defined over $k$, the space of multiplicative $\ql$-local systems is naturally a scheme defined over $\ql$ and leads to the notion of the Mellin transform as studied in \cite{GL}. In the latter case, there exists a rich supply of multiplicative $\ql$-local systems which do not have finite monodromy. In the theory of character sheaves on general algebraic groups  both the Fourier-Deligne transform as well as the Mellin transform play an important role (see also \cite{Desh:16,Desh:17}). For groups of the form $L^+T$ or $LT$, we hope to study this in future work.

The relationship between character sheaves and the local Langlands correspondence for tori over local fields $K$ has also been explored by Cunningham and Roe in \cite{CR18}. In their approach, the authors define and study (Frobenius-fixed) multiplicative local systems on possibly disconnected commutative (pro-)algebraic groups over finite fields and establish a sheaf-function correspondence for them. 
They then use the classical local Langlands correspondence for tori to obtain a description of the collection of Frobenius-fixed multiplicative local systems on the disconnected group $LT$ in terms of local Langlands parameters, which is closely related to Theorem \ref{thm:main3} above. The main difference in our approach is that we do not use the local Langlands correspondence for tori, but rather directly give a parametrization of  the multiplicative local systems on the connected group $L^+T$ and then prove that our parametrization is compatible with the classical local Langlands correspondence for tori. 

\begin{remark}\label{rk:smoothvsladic}
    In the setting of Theorems \ref{thm:main2}, \ref{thm:main3}, there often is a rich supply of multiplicative $\ql$-local systems on $L^+T$ which do not have finite monodromy. However (in the setting of Theorem \ref{thm:main3}), we will see that all the {\emph{Frobenius-fixed}} multiplicative $\ql$-local systems on $L^+T$ have finite monodromy. In particular, there is a rich supply of multiplicative $\ql$-local systems on $L^+T$ which are not fixed by any power of the Frobenius.
\end{remark}

In our approach the residue field $k$ can be any algebraically closed field. Our approach follows that of \cite{BD06,Desh:16,Desh:17} which study character sheaves on unipotent and solvable groups. In particular, we only consider multiplicative local systems on the connected commutative pro-algebraic group $L^+T$. We compute the fundamental group $\pi_1(L^+T)$ and directly give a parametrization of all multiplicative local systems on $L^+T$ in terms of inertial local Langlands parameters. We define character sheaves on a disconnected commutative algebraic group $G$ to simply be the translates of the multiplicative local systems on the neutral connected component $G^\circ$ to the other connected components $gG^\circ\subset G$. With this approach, the relationship between the Frobenius-fixed character sheaves on $G$ and the characters of (all pure inner forms of) $G(\Fq)$ is described by a ``discrete Fourier transform'' or a ``crossed S-matrix''.

Let us now describe the organization of the remainder of the text. In \S\ref{sec:prel} we recall various preliminaries needed to state and prove our results. We state the notations and conventions that are used throughout the paper in \S\ref{sec:notconv}. In \S\ref{sec:proalg} we recall the notion of pro-algebraic groups, and the definition of the fundamental group of a commutative pro-algebraic group due to Serre. In \S\ref{sec:CSonCCG}, we study multiplicative local systems on connected commutative pro-algebraic groups and their relationship with 1-dimensional characters, in case the pro-algebraic group is defined over a finite field. In \S\ref{sec:nerongreenberg}, we recall the notion of the connected N\'eron model of a torus $T$ defined over a local field and also that of the Greenberg (or positive loops) functor. In \S\ref{sec:canonclass}, we recall Serre's construction of the isomorphism (\ref{th}) in terms of the canonical class. Finally in \S\ref{sec:pfmain} we give the proofs of all our main results. In Appendix \ref{sec:ladicduality} we prove some auxiliary results related to $\ql$-characters of a certain class of abelian profinite groups which will be needed in the main part of the paper. In Appendix \ref{sec:langproalg} we recall Lang's theorem for general connected pro-algebraic groups defined over finite fields.

\section*{Acknowledgments}
We are grateful to Clifton Cunningham and Sandeep Varma for very helpful discussions. This work was supported by the Department of Atomic Energy, Government
of India, under project no.12-R\&D-TFR-5.01-0500.

\section{Preliminaries}\label{sec:prel}
\subsection{Notations and conventions}\label{sec:notconv}
     Throughout this article, $\K$ will denote a complete discrete valuation field with ring of integers $\O$ and with an algebraically closed residue field denoted by $k$. We will also consider finite Galois extensions $\E$ of $\K$. Let $\Gamma$ be the Galois group of $\E$ over $\K$. Let $\gal_{\E}$ and $\gal_{\K}$ denote the absolute Galois groups of $\E$ and $\K$ respectively. 
     We then have the following short exact sequence of groups,
     \begin{equation}\label{eqn:exactseq}
         1\to\gal_{\E}\to\gal_{\K}\to\Gamma\to 1.
     \end{equation}
    Let, \begin{equation}
    \mathscr{G}_{\E/\K}:=
        \gal_{\K}/\overline{[\gal_{\E},\gal_{\E}]}.
    \end{equation}
   Then the above exact sequence gives us the following.
     \begin{equation}\label{galois}
    1\to\gal_{\E}^{\ab}\to\mathscr{G}_{\E/\K}\to\Gamma\to 1
     \end{equation}
     
     Observe that $\gal_{\E}^{\ab}$ is a finite index normal abelian subgroup of  $\mathscr{G}_{\E/\K}$. 
      \par Let $T$ be a torus  defined over $\K$. Let $\E$ denote a finite Galois extension of $\K$ over which $T$ splits. We have the character lattice $\chg(T)=\Hom(T_{\E}, \G_{m,\E})$ and the co-character lattice $\cchg(T)=\Hom(\G_{m,\E},T_{\E})$. The group $\gal_{\K}$ acts on the lattices $\chg(T)$ and $\cchg(T)$ via the finite quotient $\gal_{\K}\twoheadrightarrow \Gamma=\gal(\E/\K)$, with the subgroup $\gal_{\E}$ acting trivially.
      \par Associated with $T$, we have the dual torus $\dt$ which is defined over $\Z$. For any commutative ring $R$, its $R$-points are given as $\dt(R)=X^*(T)\otimes R^\times$. In particular, we fix a prime number $\ell$ invertible in the residue field $k$ and consider
      \begin{equation}
      \dlt=X^\ast(T)\otimes \mql.   
      \end{equation}
      As $\Gamma$ acts on the character group $\chg(T)$, it also acts on the dual torus $\dt$. We define ${}^LT:=\dt\rtimes \gal_{\K}.$

       \noindent We now define  the abelian group $H^1_{\ell\text{-adic}}(\gal_{\K},\dlt)$. Let $Z^{1}_{\ql}(\gal_{\K},\dlt)$ be the subgroup of $Z^{1}(\gal_{\K},\dlt)$ consisting of those 1-cocycles that are continuous for $\ell$-adic topology on the target and the canonical topology on $\gal_{\K}$. The group of 1-coboundaries $B^{1}(\gal_{\K},\dlt$ is in fact a subgroup of $Z^{1}_{\ql}(\gal_{\K},\dlt)$. We define 
       \[H^1_{\ell\text{-adic}}(\gal_{\K},\dlt):=Z^{1}_{\ql}(\gal_{\K},\dlt)/B^{1}(\gal_{\K},\dlt)\leq H^{1}(\gal_{\K},\dlt).\] Now given a 1-cocycle $f:\gal_{\K}\to\dlt$, it determines a map $\widetilde{f}:\gal_{\K}\to { }^LT(\ql)$ such that (\ref{eq:illpcommute}) commutes. The cocycle condition translates to this being a homomorphism. The co-boundary maps translate to conjugation by the corresponding element of $\dlt$. As mentioned in the introduction, this induces a canonical bijection between $H^1_{\ell\text{-adic}}(\gal_{\K},\dlt)$ and $\Hom_{\gal_{\K},\ell\text{-adic}}\left.(\gal_{\K},{ }^LT(\ql))\middle/\dt(\ql)\right.$. The elements of $H^1_{\ell\text{-adic}}(\gal_{\K},\dlt)$  are known as the inertial local Langlands parameters.

       We recall the following consequence of the Baire category theorem.

\begin{lemma}\label{lemma:profinitecon}
    Let $A$ be any profinite group and $G$ an algebraic group defined over $\Q_{\ell}$. Then for any continuous homomorphism (with respect to the $\ell$-adic topology on the target) $\rho:A\to G(\ql)$, we have the $\im(\rho)\subset G(F)$ where $F$ is a finite extension of $\Q_{\ell}$.  
 \end{lemma}
 \begin{proof}
     Observe that \(\rho(A)\) is compact and hence a Baire space. We write \[\rho(A)=\bigcup_{i\in I}\rho(A)\cap G(F_i)\] where \(F_i\)'s are finite extensions of \(\Q_{\ell}\), each intersection \(\rho(A)\cap G(F_i)\) is closed and \(I\) is countable. From the Baire property, it follows that there exists \(i \in I\) such that \(\rho(A)\cap G(F_i)\) contains an open subset of \(\rho(A)\). This further implies that the index of the subgroup \(\rho(A)\cap G(F_i)\) in \(\rho(A)\) is finite. Thus, \(\rho(A)\subseteq G(F)\) where \(F\) is a finite extension of \(\Q_{\ell}\) containing \(F_i\) such that \(G(F)\) contains all the coset representatives of \(\rho(A)\cap G(F_i)\) in \(\rho(A)\).
 \end{proof}
      
       \par We use $K$ to denote a non-Archimedean local field with finite residue field $\Fq$ with $q=p^r$, where $p\neq\ell$ is a prime. Let $\Ocal$ denote the ring of integers of $K$.
       In this setting, we will use $\K$ to denote the completion of the maximal unramified extension of $K$. Thus in this case, we have the following short exact sequence, 
       \begin{equation}
           1\to\gal_{\K}\to\gal_K\to \gal(\Fqcl/\Fq)\to 1.
       \end{equation}
       The $\Fq$-Frobenius automorphism is a topological generator of $\gal(\Fqcl/\Fq)=\widehat{\Z}$. We lift it to an element of $\gal_K$. This element determines an automorphism of $\K$. Any lift of the Frobenius determines the same automorphism of $\K$. We will denote it by $\Fr_{K}$, or often simply as $\Fr:\K\to \K$. Hence we have the unique Frobenius automorphism $\Fr:\K\to\K$.
       \par From the above short exact sequence  and the inclusion  $\Z\subset\gal(\Fqcl/\Fq)$ we obtain the following short exact sequence of topological groups
       \begin{equation}
       1\to\gal_{\K}\to W_K\to \Z \to 1    
       \end{equation}
       where $W_K$ denotes the Weil group for the field $K$.
       \begin{remark}
      From the above, we get the exact sequence
      \begin{equation}
      0\to \gal_{\K}/\overline{[W_K,W_K]}\to W_K^{\ab}\to \Z\to 0.
  \end{equation} The group $\gal_{\K}/\overline{[W_K,W_K]}$ can be naturally identified with ${(\gal^{\ab}_{\K})}_{\Fr}$. 
   Moreover, local class field theory gives a canonical identification of the groups ${(\gal_{\K}^{\ab})}_{\Fr}$ and $\Ocal^{\times}$ that fit in the commutative diagram
   \begin{equation}
       \begin{tikzcd}[column sep=0pt]
           \ \ \Ocal^{\times}\arrow[d,"\re","\cong"']   &\subset & K^{\times}\arrow[d,"\re","\cong"']\\
           (\gal_{\K}^{\ab})_{\Fr} & \subset & W_K^{\ab}. 
       \end{tikzcd}
   \end{equation}
      \end{remark}
       Let $E$  denote a finite Galois extension of $K$. We let $\widetilde{\Gamma}$ denote the Galois group of $E$ over $K$. Let $W_E$ and $W_K$ be the Weil groups for $E$ and $K$ respectively. Then we have the following short exact sequences
     \begin{equation}
         1\to W_E \to W_K \to \widetilde{\Gamma} \to 1
     \end{equation}
    Using a similar notation as before, let,
    \begin{equation}
    W_{E/K}:=
        W_K/\overline{[W_E,W_E]},
    \end{equation}
    we have the short exact sequence
    \begin{equation}\label{weil}
         1\to W_{E}^{\ab} \to W_{E/K} \to \widetilde{\Gamma} \to 1.
     \end{equation}
 
     \par For a finite group $\Gamma$, a $\Gamma$-module $M$ and $i\in\Z$, we denote by $\widehat{H}^{i}(\Gamma, M)$ the $i^{th}$ Tate cohomology group. We refer to  Chapter 8 in \cite{Serre1967} for more. 
    
     \subsection{Commutative Pro-algebraic Groups}\label{sec:proalg}
     In this paper we will consider certain group schemes which occur as the limit of inverse systems of commutative algebraic groups. In \cite{Serre1961} the author describes the group of units of the discrete valuation ring $\O^\times$ as an inverse limit of commutative algebraic groups defined over the residue field $k$. Such objects are called commutative pro-algebraic groups. We recall the definition of the fundamental group of such objects from \cite{Serre1960}. This notion was used by Serre to give a geometric analogue of local class field theory.
     \par Since we will work in the setting of perfect group schemes over $k$, let us recall the definition of a perfect scheme. A scheme over a field of characteristic $p\neq 0$ is called perfect if the map $f\mapsto f^p$ on the local sections of the structure sheaf is an isomorphism of sheaves. We call any scheme over a characteristic $0$ field a perfect scheme. Perfect schemes over the algebraically closed field $k$  form a sub-category of the category of all schemes over $k$. The inclusion functor from the sub-category of perfect schemes to the category of all schemes admits a right adjoint functor called the perfectization functor \cite{Gr1965},\cite[\S A.3]{Boy2013}. Passing from a scheme to its perfectization does not change the Zarisiki or the \'etale topology or the $k$-points of the underlying scheme.  
     \begin{definition}
         A quasi-algebraic group over $k$ is a perfect group scheme over $k$ such that it is isomorphic to perfectization of a finite type group scheme over $k$.
     \end{definition}
     
      \noindent We now recall the definition of commutative pro-algebraic groups from \cite[\S 2.1]{Serre1960}:
     
     \begin{definition}
	A commutative pro-algebraic group $G$ defined over $k$ is a commutative group scheme over $k$ together with a non-empty collection of subgroup schemes $S$ such that for any $H \in S$ the quotient $G/H$ exists and is a quasi-algebraic group over $k$, and this data satisfies the following:
	\begin{enumerate}
		\item $H, H'\in S \implies H\cap H \in S'$.
		\item If $H\in S$, the subgroups containing  $H$ in $S$ are the inverse images of the closed subgroups of $G/H$.
		\item If $H\subseteq H'$ and $H,H'\in S$, then the homomorphism $G/H\to G/H'$ is a homomorphism of quasi-algebraic groups.
		\item The natural map $G \to \varprojlim G/H $ is an isomorphism to the projective limit of $G/H, H\in S$ as commutative group schemes.
	 \end{enumerate} 
         \end{definition}
     The category $\mathscr{P}$ of commutative pro-algebraic groups with morphisms as defined in \cite[\S 2.1]{Serre1960} is an abelian category. 
     
     \par A commutative pro-algebraic group $G$ is connected if  $G/H$ is connected for each $H\in S$. Let $G^\circ$ be the connected component containing the identity element.  It is the smallest subgroup of $G$ such that $G/G^\circ$ is profinite. We also have $G/G^\circ=\varprojlim (G/H)/{(G/H)}^\circ$ \cite[\S5.1]{Serre1960}. 
     \par Denote by $\mathscr{P}_{0}$ the category of abelian profinite groups with morphisms as group homomorphisms, or in other words, it is the full subcategory of $\mathscr{P}$ of 0-dimensional commutative pro-algebraic groups.
     \par Let $\pi_0:\mathscr{P} \to \mathscr{P}_{0}$ be the functor given by $$\pi_0(G):=G/G^\circ.$$  This functor is right exact \cite[\S5.1]{Serre1960}. The left derived functors of $\pi_0$ are denoted by $\pi_i$. In particular:
     \begin{definition}\label{defn:pi1}
     The first left derived functor $\pi_1(G)$ is called the fundamental group of $G$.
     \end{definition}
      The functor $\pi_1$ commutes with inverse limits (See \cite[\S 5.3]{Serre1960}).
      It is proved in \cite[\S 10]{Serre1960} that $\pi_i(G)=0, \forall i\geq 2$ for any commutative pro-algebraic group $G$. This shows that $\pi_1$ is in fact a left exact functor. Thus a short exact sequence $0\to A\to B\to C\to 0$ in $\mathscr{P}$ 
      gives a long exact sequence
       \begin{align}\label{eq:les}
     0 \to \pi_1(A)\to \pi_1(B)\to \pi_1(C)\xrightarrow{\delta} \pi_0(A)\to \pi_0(B)\to \pi_0(C)\to 0.
    \end{align}    
      In particular if $A$ is connected, i.e. if $\pi_0(A)=0$, then we have the short exact sequence
    \begin{equation}
        0 \to \pi_1(A)\to \pi_1(B)\to \pi_1(C)\to 0
    \end{equation}
    i.e.  we have
    \begin{equation}\label{eq:pi_1 quo}
     \pi_1(B/A)=\pi_1(B)/\pi_1(A).   
    \end{equation}
    We have for a pro-algebraic group $G$, $\pi_1(G^{\circ})=\pi_1(G)$.
    \par There is a notion of universal cover of a commutative pro-algebraic group given in \cite[\S 6.2]{Serre1960}. We recall it here.
    A commutative pro-algebraic group is said to be simply connected if $\pi_1(G)=0$. For any $G\in \mathscr{P}$, $\exists$ a connected, simply connected commutative pro-algebraic group $\overline{G}$ with an exact sequence:
    \begin{equation}\label{eq:defnunicover}
        0\to\pi_1(G)\to\overline{G}\to G\to\pi_0(G)\to 0.
    \end{equation}
    The group $\overline{G}$ is called the universal cover of $G$.
    \par The construction of the universal cover for a connected quasi-algebraic group is given in \cite[\S 6.4]{Serre1960} as the inverse limit of connected groups $G_{f}$ with an isogeny $f:G_{f}\to G$. Hence the fundamental group of such objects is
    \begin{equation}\label{eq:pi_1}
      \pi_1(G)=\varprojlim \ker(f).  
    \end{equation}
    It is proved in \cite[\S 2]{Serre1960} that the universal cover functor commutes with inverse limits. Hence we obtain a description for the universal cover of any pro-algebraic group $G$.
    \begin{remark}\label{remark:et}
    For any $k$-scheme $X$, the \'etale fundamental group of $X$, denoted by $\pi_{1}^{\acute{e}t}(X)$, is the inverse limit of automorphism groups of all finite \'etale covers of $X$, whereas $\pi_1$  defined here (Definition \ref{defn:pi1}) is defined only for commutative pro-algebraic groups. It is the inverse limit of automorphism groups of finite \'etale covers which are also isogenies as above. Hence for a pro-algebraic group $G$ there is a surjective map $\pi_{1}^{et}(G)\twoheadrightarrow \pi_1(G)$.
    \end{remark}

We next consider the case $k=\Fqcl$. Let $G$ be a connected commutative pro-algebraic group over $k$ equipped with an $\Fq$-Frobenius $\Fr:G\to G$ as in Appendix \ref{sec:langproalg}. By Lang's theorem  for connected pro-algebraic groups (Theorem \ref{thm:langprofin}), for any positive integer $n$, the $n$-th Lang isogeny $\Lang_n: G \to G$ defined by
            $g\mapsto g \Fr^{n}(g^{-1})$ gives us the following short exact sequence in $\mathscr{P}$
    \begin{equation}\label{eq:Lang ses}
        0 \to G(\F_{q^n})\to G\xrightarrow{\Lang_n} G\to 0.
    \end{equation}
    We then have the following lemma:
    \begin{lemma}\label{lemma:pi1to fqpts}
       In the setting above, we have the canonical maps $\pi_1(G)\twoheadrightarrow \pi_1(G)_{\Fr^n}\xrightarrow{\cong}G(\mathbb{F}_{q^n})$, i.e. the profinite abelian group $G(\F_{q^n})$ can be canonically identified with the $\Fr^n$-coinvariants of $\pi_1(G)$.
    \end{lemma}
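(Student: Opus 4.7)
The plan is to apply the long exact sequence \eqref{eq:les} to the Lang short exact sequence \eqref{eq:Lang ses}. First, observe that $G(\F_{q^n})$, being a profinite abelian group, lies in $\mathscr{P}_0 \subset \mathscr{P}$, so it is its own $\pi_0$ and has trivial $\pi_1$; also, $G$ is connected by hypothesis, so $\pi_0(G) = 0$. Thus the six-term exact sequence collapses to
\[
0 \to \pi_1(G) \xrightarrow{\pi_1(\Lang_n)} \pi_1(G) \xrightarrow{\delta} G(\F_{q^n}) \to 0.
\]

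Next, I would identify the endomorphism $\pi_1(\Lang_n)$. Since $G$ is commutative, the Lang map $\Lang_n(g) = g \cdot \Fr^n(g)^{-1}$ is genuinely a morphism in the abelian category $\mathscr{P}$, and in the additive notation for the Hom-groups it decomposes as $\Lang_n = \mathrm{id}_G - \Fr^n$. Because $\pi_1$ is an additive functor between abelian categories (being a derived functor of $\pi_0$), it respects this decomposition, giving $\pi_1(\Lang_n) = 1 - \Fr^n$ on $\pi_1(G)$, where $\Fr$ denotes the functorial action $\pi_1(\Fr)$ on the fundamental group.

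Combining these two points, the image of $\pi_1(\Lang_n)$ is $(1 - \Fr^n)\pi_1(G)$, so the connecting map $\delta$ factors through the quotient $\pi_1(G) \twoheadrightarrow \pi_1(G)/(1-\Fr^n)\pi_1(G) = \pi_1(G)_{\Fr^n}$ and induces the asserted canonical isomorphism $\pi_1(G)_{\Fr^n} \xrightarrow{\cong} G(\F_{q^n})$.

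The only non-routine point is the identification $\pi_1(\Lang_n) = 1 - \Fr^n$, which rests on two facts one should be careful to cite: that $\Lang_n$ is a homomorphism in $\mathscr{P}$ (this needs commutativity of $G$) and that $\pi_1$ is additive on morphisms (this follows from it being the first left derived functor of $\pi_0$ on the abelian category $\mathscr{P}$, as set up in \S\ref{sec:proalg}). Once these are in place, the rest is essentially the mechanics of the long exact sequence, and no further delicate input is needed.
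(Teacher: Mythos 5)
Your proof is correct and follows the same approach as the paper: apply the long exact sequence for $\pi_1$ and $\pi_0$ to the Lang short exact sequence and read off the cokernel. The only difference is that you spell out the identification $\pi_1(\Lang_n) = 1 - \Fr^n$ via additivity of $\pi_1$ on the abelian category $\mathscr{P}$, a point the paper leaves implicit in the phrase ``observe that the $\Fr^n$-coinvariants of $\pi_1(G)$ are exactly $\pi_1(G)/\Lang_n(\pi_1(G))$.''
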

    \begin{proof}
        Consider the long exact sequence \eqref{eq:les} associated with the short exact sequence \eqref{eq:Lang ses}:  
        \begin{align}
        \dots\to \pi_1(G(\F_{q^n})) = 0 \to \pi_1(G) \xrightarrow[]{\pi_1(\Lang_n)}\pi_1(G) \to G(\F_{q^n})\to 0=\pi_0(G)\to\dots 
      \end{align}
      Observe that the $\Fr^n$-coinvariants of $\pi_1(G)$ are exactly $\pi_1(G)/\pi_{1}(\Lang_n)(\pi_1(G))$. This proves the lemma.
    \end{proof}

   \subsection{Character sheaves on connected commutative pro-algebraic groups}\label{sec:CSonCCG}
    \begin{definition}
        Let $G$ be a connected commutative pro-algebraic group defined over an algebraically closed field $k$ of characteristic $p\geq 0$ and let $\ell$ be a prime number invertible in $k$. A  multiplicative $\ql$-local system, or a character sheaf on $G$ is a rank one $\ql$-local system $\mathcal{L}$ on $G$ such that $\alpha^*\mathcal{L}\cong \mathcal{L}\boxtimes\mathcal{L}$, where $\alpha:G\times G\to G$ is the multiplication map. The isomorphism classes of multiplicative local systems on $G$ form an abelian group under tensor product, which we denote by $\CS(G)$.
    \end{definition}

    \begin{remark}
    Let $A$ be a commutative profinite group. We can analogously define the notion of multiplicative $A$-torsors on a connected pro-algebraic group $G$. Then the isomorphism classes of multiplicative $A$-torsors on $G$ can be naturally identified with the isomorphism classes of central extensions of $G$ by $A$
    (see \cite[\S1]{Kam2009} and \cite[\S 7.2]{Boy2010}). This further gives us a natural identification of abelian groups $$\CS(G)=\Hom_{\ell\text{-adic}}(\pi_1(G),\ql^\times)$$ as follows:
    \\
    Let $\mathcal{L}_{\chi}$ be any rank 1 local system on $G$ corresponding to a continuous ($\ell$-adic) character $\chi:\pi_1^{\acute{e}t}(G)\to \ql^\times$ of the \emph{\'etale fundamental group} of $G$. This corresponds to an $\im(\chi)$-torsor $X\onto G$, where $\im(\chi)\subseteq \ql^\times$ is a pro-finite subgroup. Moreover, the local system $\mathcal{L}_\chi$ is multiplicative if and only if the corresponding $\im(\chi)$-torsor $X\onto G$ is multiplicative, which is in turn equivalent to $X\onto G$ having the structure of a central extension of $G$ by $\im(\chi)$. In other words, $\mathcal{L}_\chi$ is a multiplicative local system if and only if the character $\chi$ factors through the quotient $\pi_1^{\acute{e}t}(G)\onto\pi_1(G)$.

    Also note that by Lemma \ref{lemma:profinitecon}, $\im(\chi)\subset \Ocal_{F}^\times$ for some finite extension $F$ of $\mathbb{Q}_{\ell}$.
    \end{remark}

     Lusztig defined character sheaves on connected commutative algebraic groups $G$ defined over a finite field $\Fq$ in \cite[\S 5]{Lus06} and related them to the character theory of the finite abelian groups $G(\Fq)$. We will recall these results below following \cite[\S 1.5]{BD06}.

    Let $G$ be a connected commutative algebraic group over $\Fqcl$ equipped with an $\Fq$-structure given by a Frobenius map $\Fr:G\to G$. For each positive integer $n$, a $\Fr^{n}$-invariant $\ql$-multiplicative local system  is a multiplicative local system  with an isomorphism $\psi_{n,\mathcal{L}}:{(\Fr^{n})}^{\ast}(\mathcal{L})\cong \mathcal{L}$, which is chosen such that it is identity on the stalk of $\mathcal{L}$ at $1\in G$. Trace functions for such local systems  $\mathcal{L}$ are  defined in \cite{Del77}, \cite[\S 5]{Lus06} and \cite[\S 1.5]{BD06} as follows:
    \begin{equation}\label{eq:defn trace fn}
    \begin{split}
      \Tr_{n,\mathcal{L}}:  G(\mathbb{F}_{q^n})\to \mql\\
      \Tr_{n,\mathcal{L}}(g):= \tr(\mathcal{L}_g=\mathcal{L}_{\Fr^n(g)}\xrightarrow[\cong]{\psi_{n,\mathcal{L},g}} \mathcal{L}_g),
     \end{split}
    \end{equation}
    where $\mathcal{L}_g$ denotes the stalk of $\mathcal{L}$ at $g\in G(\mathbb{F}_{q^n})$. It was proved by Lusztig that these trace of Frobenius functions provide a bijection between the set of $\Fr^{n}$-invariant multiplicative $\ql$-local systems on $G$ and 1-dimensional characters of $G(\mathbb{F}_{q^n})$ for any $n\in\Z_{>0}$.
    To construct the map in the other direction, let \begin{equation} \label{eq:defn Psi_n}
    \Psi_n:\Hom(G(\mathbb{F}_{q^n}),\mql)\xrightarrow{\Psi_n}\Hom_{\ell\text{-adic}}(\pi_{1}(G),\mql)    
    \end{equation} be the map induced by pulling back a character along  the  homomorphism $\pi_1(G)\twoheadrightarrow G(\mathbb{F}_{q^n})$ given by Lemma \ref{lemma:pi1to fqpts}. Since ${\pi_1(G)}_{\Fr^n}=G(\mathbb{F}_{q^n})$, we have $\im(\Psi_n)\subset \Hom_{\ell\text{-adic}}(\pi_{1}(G),\mql)^{\Fr^n}$. 
    We recall Lusztig's result following \cite[\S1.5, \S 1.8]{BD06}:
    \begin{theorem}\label{th: quasi alg sh-fn}
    Let $G$ be a connected, commutative algebraic group  over $\Fqcl$ equipped with an $\Fq$-structure. 
    \begin{itemize}
        \item[(i)] The morphism of abelian groups $\Psi_n$  \eqref{eq:defn Psi_n} gives us an isomorphism
        \begin{equation}\label{sh-fn}
            \Hom(G(\mathbb{F}_{q^n}),\mql)\xrightarrow{\Psi_n}\Hom_{\ell\text{-adic}}(\pi_{1}(G),\mql)^{\Fr^n}=\CS(G)^{\Fr^n}
    \end{equation}
    and for every character $\chi:G(\mathbb{F}_{q^n})\to \mql$ 
     $$\Tr_{n, \Psi_n(\chi)}=\chi.$$
     \item[(ii)] For $n\in\N$, we have the norm homomorphism  $\nm_n:G(\mathbb{F}_{q^n})\to G(\Fq)$  defined as,
     \begin{equation}\label{norm map}
      \nm_n=\prod_{s=1}^{n} \Fr^{s}.
    \end{equation}
    The homomorphism on the Pontryagin duals induced by the norm map defines an isomorphism 
    \begin{equation}\label{compatibilty}  \widehat{\nm}_n:\Hom(G(\Fq),\mql)\xrightarrow{\cong}\Hom(G(\mathbb{F}_{q^n}),\mql)^{\Fr}.
    \end{equation}
   \item[(iii)] The following diagram is commutative
    \begin{center}
     \begin{tikzcd}
\Hom(G(\Fq),\mql)\arrow[r,"\widehat{\nm}_n","\cong"']\arrow[d,"\Psi_1","\cong"']& \Hom(G(\mathbb{F}_{q^n}),\mql)^{\Fr}\arrow[r,hook]\arrow[d,"\Psi_n","\cong"'] & \Hom(G(\mathbb{F}_{q^n}),\mql)\arrow[d,"\Psi_n","\cong"'] \\
\Hom_{\ell\text{-adic}}(\pi_{1}(G),\mql)^{\Fr}\ar[equal]{r}& \Hom_{\ell\text{-adic}}(\pi_{1}(G),\mql)^{\Fr}\arrow[r,hook] & \Hom_{\ell\text{-adic}}(\pi_{1}(G),\mql)^{\Fr^n}. 
    \end{tikzcd}
    \end{center}
   \end{itemize}
    \end{theorem}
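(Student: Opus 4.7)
The plan is to prove (i) by first establishing the abstract bijection $\Psi_n$ and then verifying the trace formula via Galois descent along the Lang isogeny. Parts (ii) and (iii) will follow from formal manipulations using the identification $G(\F_{q^m}) \cong \pi_1(G)_{\Fr^m}$ provided by Lemma \ref{lemma:pi1to fqpts}.

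For the bijection in (i), I would observe that by the universal property of coinvariants, a continuous character $\chi : \pi_1(G) \to \mql$ factors through $\pi_1(G)/(1-\Fr^n)\pi_1(G) \cong G(\F_{q^n})$ if and only if it is killed by $(1-\Fr^n)$, i.e.\ if and only if it is $\Fr^n$-invariant. This yields the abstract isomorphism. For the trace formula, I would realise $\Psi_n(\chi)$ geometrically as follows. The Lang isogeny $\Lang_n : G \to G$ is a finite \'etale Galois cover with deck group $G(\F_{q^n})$, so a character $\chi$ of $G(\F_{q^n})$ determines a rank one direct summand $\mathcal{L}_\chi$ of $(\Lang_n)_* \ql$. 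One checks that $\mathcal{L}_\chi$ is multiplicative and that its associated character on $\pi_1(G)$ via the quotient $\pi_1(G) \twoheadrightarrow G(\F_{q^n})$ is precisely $\chi$, so $\mathcal{L}_\chi = \Psi_n(\chi)$. The trace formula $\Tr_{n,\mathcal{L}_\chi}(g) = \chi(g)$ then follows by picking a lift $h \in G$ with $\Lang_n(h) = g$ and observing that $\Fr^n$ acts on the stalk $((\Lang_n)_* \ql)_g$ by translation-by-$g$ on the natural basis indexed by $\Lang_n^{-1}(g)$, hence by the scalar $\chi(g)$ on the $\chi$-isotypic summand.

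For (ii), I would identify both Pontryagin duals with subgroups of $\Hom_{\ql}(\pi_1(G),\mql)$ and note that a character of $G(\F_{q^n})$ is $\Fr$-invariant precisely when it factors further through $\pi_1(G)/(1-\Fr)\pi_1(G) \cong G(\F_q)$. It then remains to recognise the dual of the resulting quotient $\pi_1(G)_{\Fr^n} \twoheadrightarrow \pi_1(G)_{\Fr}$ as the map induced by the norm $\nm_n$. I would deduce this from the factorisation $1-\Fr^n = (1-\Fr)(1 + \Fr + \cdots + \Fr^{n-1})$, which yields a commutative square of Lang isogenies exhibiting the identity on $\pi_1(G)$ as inducing $\nm_n : G(\F_{q^n}) \to G(\F_q)$ on coinvariant quotients; passing to Pontryagin duals gives the desired isomorphism $\widehat{\nm}_n$.

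Part (iii) is then a formal diagram chase: all three vertical isomorphisms are instances of the universal property of coinvariants applied to different quotients of $\pi_1(G)$, and the horizontal maps correspond naturally under these identifications. The main obstacle in this program is the trace formula in (i): one must align the geometric realisation of $G(\F_{q^n})$ as the deck group of $\Lang_n$ with the algebraic identification $G(\F_{q^n}) \cong \pi_1(G)_{\Fr^n}$ from Lemma \ref{lemma:pi1to fqpts}, and verify that the normalisation of $\psi_{n,\mathcal{L}_\chi}$ by the condition of being the identity on the stalk at $1$ is the one induced from the natural $G(\F_{q^n})$-equivariant structure on $(\Lang_n)_* \ql$. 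Once this compatibility is established, everything else reduces to formal manipulations of the coinvariants of $\pi_1(G)$ under the $\Fr$-action.
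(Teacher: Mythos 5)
The paper does not actually prove this theorem; it is recalled as a known result of Lusztig, with references to \cite{Lus06} and \cite{BD06}, and the only remark the paper adds is that the argument survives perfectization. So there is no ``paper proof'' to compare yours against. That said, the strategy you outline is precisely the one underlying the standard argument in those references: identify $G(\F_{q^n})$ with $\pi_1(G)_{\Fr^n}$ via the Lang isogeny, realise $\Psi_n(\chi)$ as the $\chi$-isotypic summand of $(\Lang_n)_*\ql$, and compute Frobenius traces on stalks directly. The proposal is therefore a genuine proof where the paper gives none, and it is the right proof.

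A few points worth tightening. In (i), before concluding that a continuous $\Fr^n$-invariant character of $\pi_1(G)$ factors through the coinvariants, you should note that $(1-\Fr^n)\pi_1(G)$ is closed (continuous image of a compact group into a Hausdorff group), so that the induced map on the quotient is automatically continuous; this is immediate but must be said. In (ii), the factorisation $1-\Fr^n=(1-\Fr)(1+\Fr+\cdots+\Fr^{n-1})$ does give a commutative square of Lang isogenies with vertical maps $N=1+\Fr+\cdots+\Fr^{n-1}$ and $\mathrm{id}$, but you should also verify that the restriction of $N$ to $G(\F_{q^n})$ coincides with the norm $\nm_n=\prod_{s=1}^{n}\Fr^s$ from \eqref{norm map}: on $G(\F_{q^n})$ one has $\Fr^n=\mathrm{id}$, so $\prod_{s=1}^{n}\Fr^s=\prod_{s=0}^{n-1}\Fr^s=N$, and the identification is exact. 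The most delicate point — which you rightly flag as the main obstacle — is the trace formula: $\Fr^n$ acts on the fibre $\Lang_n^{-1}(g)$, for $g\in G(\F_{q^n})$, by $x\mapsto x g^{-1}$ (since $\Lang_n(x)=x\Fr^n(x)^{-1}=g$), so whether the computed trace comes out as $\chi(g)$ or $\chi(g)^{-1}$ hinges on the chosen convention for the $\chi$-isotypic summand of $(\Lang_n)_*\ql$ and on the normalisation $\psi_{n,\mathcal L}$ being the identity on the stalk at $1$. This is a bookkeeping issue, not a conceptual gap, but it cannot be elided in a complete proof. Modulo that care, your argument is sound and agrees in substance with the references the paper cites.
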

    The above theorem also holds if we consider quasi-algebraic connected commutative groups as in \cite{BD06} since passing to the perfectizations does not affect the arguments.
     We will now establish a similar correspondence for  pro-algebraic groups of certain type. We first prove a simple lemma: 
     \begin{lemma}\label{lemma:pro-quo}
     Let $P$ be a profinite group with $P=\varprojlim P_i$, where $\{P_i\}_i$ form a directed inverse system  of profinite groups. Then any continuous homomorphism from $P$ to a finite group factors through some $P_i$.
      \end{lemma}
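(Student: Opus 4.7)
The key observation is that $\ker(f)$ is an open subgroup of $P$: since $F$ is finite and hence discrete, the singleton $\{1\}\subset F$ is open, so continuity of $f$ makes $\ker(f) = f^{-1}(\{1\})$ open in $P$. My strategy is to show that this open subgroup contains $\ker(q_i)$ for some structural projection $q_i: P \to P_i$; equivalently, that $f$ factors through $q_i$.

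By definition of the inverse limit topology, any open neighborhood of $1 \in P$ contains a finite intersection $\bigcap_{k=1}^{n} q_{i_k}^{-1}(U_{i_k})$, with each $U_{i_k}$ an open neighborhood of $1$ in $P_{i_k}$. Since each $P_{i_k}$ is profinite, I can shrink $U_{i_k}$ to an open normal subgroup $N_{i_k} \trianglelefteq P_{i_k}$. The directedness of the index system then supplies a single index $i$ dominating all the $i_k$, equipped with transition maps $\phi_k: P_i \to P_{i_k}$ satisfying $q_{i_k} = \phi_k \circ q_i$. Setting $N := \bigcap_k \phi_k^{-1}(N_{i_k})$, which is a finite intersection of open normal subgroups of $P_i$ and hence itself open normal in $P_i$, we obtain $q_i^{-1}(N) = \bigcap_k q_{i_k}^{-1}(N_{i_k}) \subseteq \ker(f)$.

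Since $\ker(q_i) \subseteq q_i^{-1}(N) \subseteq \ker(f)$, the homomorphism $f$ descends through $q_i$: the composition $P \xrightarrow{q_i} P_i \twoheadrightarrow P_i/N$ has kernel $q_i^{-1}(N) \subseteq \ker(f)$, and because $P_i/N$ is a finite discrete group, $f$ is visibly pulled back along $q_i$ from a homomorphism defined on $P_i$ (factoring through $P_i/N \to F$). The only mild subtlety is the reduction from a finite intersection of basic opens indexed by different $i_k$'s to a single index, which is exactly what directedness of the inverse system is designed to handle; beyond that the argument is purely formal.
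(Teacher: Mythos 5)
Your proof is correct and takes essentially the same approach as the paper: both arguments show that the open subgroup $\ker(f)$ contains the kernel of the projection $P\to P_i$ for a single index $i$, using directedness of the index set to reduce a finite family of indices to one. The only stylistic difference is that the paper extracts the finite family via a compactness/finite-subcover argument applied to the complement of $\ker(f)$, whereas you read it off directly from the definition of the inverse-limit topology; these are two phrasings of the same underlying fact.
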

   \begin{proof}
   Let $f:P\to A$ be a continuous homomorphism to a finite  group $A$. Let $N_i$ denote the the kernel of the map $P\to P_i$ and $N$ denote the kernel of $f$. Each $N_i$, $N$ is a closed subset of $P$ and  $\cap_i N_i =\{1\}$. Taking the complements, $\cup_i {N_i}^{\mathsf{c}}= P\setminus\{1\}\supseteq {N}^{\mathsf{c}}$. Since $N$ has finite index in $P$, ${N}^{\mathsf{c}}$ is compact. Hence it is covered by finitely many ${N_i}^{\mathsf{c}}$. Since the indexing set is directed, there exists an $i_0$ such that $N_{i_0}\subset N$. Hence $f$ factors through  $P_{i_0}$.
    \end{proof}     
     \noindent We now make the following assumptions on the pro-algebraic group $G$ defined over an algebraically closed field $k$:
     \begin{assumption}\label{assump1}
     Assume that $G$ is a connected, commutative pro-algebraic group defined over an algebraically closed field $k$, that has a filtration $G\supset U^{(0)}\supset U^{(1)}\supset U^{(2)}\supset U^{(3)}\supset\dots$ by connected pro-unipotent subgroups $U^{(i)}$, such that each $G/U^{(i)}$ is a commutative quasi-algebraic group and that $G=\varprojlim G/U^{(i)}$, and hence $U^{(0)}=\varprojlim U^{(0)}/U^{(i)}$.
      \end{assumption}
      Since the functor $\pi_1$ commutes with inverse limits and since $U^{(i)}$ are connected, we have using \eqref{eq:pi_1 quo}
\begin{equation}\label{eq:pi1,inv limit}
     \begin{split}
      \pi_1(G)= & \varprojlim\pi_1(G/U^{(i)})=\varprojlim\pi_1(G)/\pi_1(U^{(i)})\\  
      \pi_1(U^{(0)})= & \varprojlim\pi_1(U^{(0)}/U^{(i)})=\varprojlim\pi_1(U^{(0)})/\pi_1(U^{(i)}).
     \end{split}
    \end{equation}
    \begin{lemma}\label{lemma:pi1 fin-im}
         Let $G$ be a connected commutative pro-algebraic group satisfying Assumption \ref{assump1}. Then any $\chi\in\Hom_{\ell\text{-adic}}(\pi_1(G),\mql)$ factors through $\pi_1(G/U^{(i)})$ for some $i$. In other words, any multiplicative local system on $G$ is a pull-back of a multiplicative local system on $G/U^{(i)}$ under the natural projection $G\to G/U^{(i)}$. If $k$ is of characteristic 0, then in fact $\pi_1(G)=\pi_1(G/U^{(0)})$ and hence any multiplicative local system on $G$ is a pull-back of a multiplicative local system on $G/U^{(0)}$.
     \end{lemma}
     \begin{proof}
     If characteristic of $k$ is $0$, then $\pi_1(U^{(0)})$ is trivial. Hence the last statement above is clear.
     
     We now assume that the characteristic of $k$ is a prime number $p$. Since $\chi$ is a continuous (\(\ell\)-adic) homomorphism, $\im(\chi)\subset F^\times$ where $F$ is a finite extension of $\Q_{\ell}$ by Lemma \ref{lemma:profinitecon}. Additionally  $\pi_1(G)$ is a profinite group, hence compact and hence $\im(\chi)\subset \Ocal_{F}^{\times}$ where $\Ocal_{F}$ is the ring of integers of $F$.  Consider $\chi|_{\pi_1(U^{(0)})}$. Since $U^{(0)}$ is a pro-unipotent group, $\pi_1(U^{(0)})$ is a pro-$p$ group \cite[\S 8]{Serre1960}. Hence the $\ell$-primary part $(\pi_1(U^{(0)}))_{\ell}$ is trivial. The structure of the group of units $\Ocal_{F}^{\times}$ is of the form $$\Ocal_{F}^{\times} \cong \Z/((\ell-1)\Z)\oplus\Z/{\ell}^a\Z\oplus\Z_{\ell}^{d}$$ for some non-negative integers $a$ and $d$ (Chapter 2,\cite[\S 5]{Neu1992}). In particular, the $p$-primary part of $\Ocal_{F}^{\times}$ i.e. $(\Ocal_{F}^{\times})_p$ is finite. Since $\chi$ preserves the $p$-primary part, the image $\chi({\pi_1(U^{(0)})})$ is finite. Using \eqref{eq:pi1,inv limit} and Lemma \ref{lemma:pro-quo} we see that $\chi|_{\pi_1(U^{(0)})}$ factors through $\pi_1(U^{(0)}/U^{(i)})=\pi_1(U^{(0)})/\pi_1(U^{(i)})$ for some $i$, i.e. $\pi_1(U^{(i)})\subset\ker\chi$. Hence $\chi$ factors through $\pi_1(G/U^{(i)})=\pi_1(G)/\pi_1 (U^{(i)})$ as desired.
        \end{proof}

    \begin{definition}
    Suppose that $G$ satisfies Assumption \ref{assump1}. Depth of a character sheaf $\mathcal{L}$, corresponding to a $\ql$-character $\chi:\pi_1(G)\to\mql$, is the minimal non-negative integer $i$ such that $\chi$ factors through $\pi_1(G/U^{(i)})=\pi_1(G)/\pi_1(U^{(i)})$. Equivalently, it is the minimal non-negative integer $i$ such that $\mathcal{L}$ is the pull-back of a character sheaf on the quasi-algebraic group $G/U^{(i)}$ under the natural projection $G\onto G/U^{(i)}$.
    \end{definition}
    Note that the depth of the character sheaf depends on the chosen filtration.
    Denote by $\CS_{\leq d}(G)\subset \CS(G)$ the subgroup of multiplicative local systems of depth less than or equal to $d\in \Z_{\geq 0}$. By Lemma \ref{lemma:pi1 fin-im}
   \begin{equation}\label{eq:CS union}
     \CS(G)=\bigcup_{d\geq 0} \CS_{\leq d}(G)=\bigcup_{d\geq 0}\CS(G/U^{(d)}).  
   \end{equation}
    \begin{assumption}\label{assump2}
    Assume that $k=\Fqcl$. In addition to Assumption \ref{assump1}, suppose that $G$ has an $\Fq$-structure given by a Frobenius $\Fr:G\to G$ such that each $U^{(i)}$ is $\Fr$-stable. Since each $U^{(i)}$ is connected, using Lang's Theorem we see that
    \begin{equation}\label{eq:G fqpoints quo}
     \begin{split}
         U^{(0)}(\Fq)=\varprojlim (U^{(0)}/U^{(i)})(\Fq)=\varprojlim U^{(0)}(\Fq)/U^{(i)}(\Fq),\\
         G(\Fq)=\varprojlim(G/U^{(i)})(\Fq)=\varprojlim G(\Fq)/U^{(i)}(\Fq).
     \end{split}
     \end{equation}
     
    \end{assumption}
    \begin{lemma}\label{lemma:fqpts fin im}
        For a connected commutative pro-algebraic group $G$ satisfying Assumptions \ref{assump1} and \ref{assump2}, any $\ql$-character $\rho:G(\Fq)\to\mql$ factors through $(G/U^{(i)})(\Fq)=G(\Fq)/U^{(i)}(\Fq)$ for some $i$. In particular $\im(\rho)$ is finite. 
    \end{lemma}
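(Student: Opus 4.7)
The plan is to mimic the argument in the proof of Lemma \ref{lemma:pi1 fin-im}, replacing the fundamental group $\pi_1$ with the profinite group of $\Fq$-points throughout, and exploiting the fact that $U^{(0)}(\Fq)$ is a pro-$p$ group while the $\ell$-adic units of any finite extension of $\Q_\ell$ have only a finite $p$-primary part.

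First I would observe that under Assumption \ref{assump2} the group $G(\Fq)$ is profinite (by (\ref{eq:G fqpoints quo})), hence compact, so any continuous $\ql$-character $\rho$ takes values in $\Ocal_F^\times$ for some finite extension $F/\Q_\ell$, and $\Ocal_F^\times$ has finite $p$-primary part since $\Ocal_F^\times \cong \Z/(\ell-1)\Z \oplus \Z/\ell^a\Z \oplus \Z_\ell^d$. Next, I would restrict $\rho$ to $U^{(0)}(\Fq)$; since each quotient $U^{(0)}/U^{(i)}$ is a connected commutative unipotent quasi-algebraic group over $\Fq$, its group of $\Fq$-points $(U^{(0)}/U^{(i)})(\Fq)$ is a finite $p$-group (this is the standard fact for unipotent groups over $\Fq$, and connectedness combined with Lang's theorem gives the identification $U^{(0)}(\Fq)/U^{(i)}(\Fq) = (U^{(0)}/U^{(i)})(\Fq)$). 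Passing to the inverse limit in (\ref{eq:G fqpoints quo}), $U^{(0)}(\Fq)$ is pro-$p$. Consequently $\rho(U^{(0)}(\Fq))$ lies in the $p$-primary part of $\Ocal_F^\times$, and is therefore finite.

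Once the restriction $\rho|_{U^{(0)}(\Fq)}$ has finite image, I would apply Lemma \ref{lemma:pro-quo} to the presentation $U^{(0)}(\Fq) = \varprojlim U^{(0)}(\Fq)/U^{(i)}(\Fq)$ to conclude that $\rho|_{U^{(0)}(\Fq)}$ factors through some $U^{(0)}(\Fq)/U^{(i)}(\Fq)$, i.e.\ $U^{(i)}(\Fq) \subset \ker\rho$. Using again that $U^{(i)}$ is connected (so Lang gives $G(\Fq)/U^{(i)}(\Fq) = (G/U^{(i)})(\Fq)$), this shows $\rho$ factors through $(G/U^{(i)})(\Fq)$ for this $i$. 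Since $G/U^{(i)}$ is a commutative quasi-algebraic group over $\Fq$, the group $(G/U^{(i)})(\Fq)$ is finite, and hence $\im(\rho)$ is finite.

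The only mildly non-routine step is verifying that $U^{(0)}(\Fq)$ is pro-$p$; everything else is a direct transcription of Lemma \ref{lemma:pi1 fin-im}. The obstacle disappears once one notes that for a connected commutative unipotent algebraic group $U$ over $\Fq$ the set $U(\Fq)$ is a finite $p$-group (e.g.\ by a filtration with $\G_a$-quotients), so that the inverse limit $U^{(0)}(\Fq) = \varprojlim (U^{(0)}/U^{(i)})(\Fq)$ is pro-$p$.
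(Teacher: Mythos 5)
Your proof is correct and follows the same route as the paper's own (terse) argument: show that $U^{(0)}(\Fq)$ is pro-$p$, use that $\Ocal_F^\times$ has finite $p$-primary part to force $\rho|_{U^{(0)}(\Fq)}$ to have finite image, then invoke Lemma \ref{lemma:pro-quo} together with the identifications of Assumption \ref{assump2} exactly as in Lemma \ref{lemma:pi1 fin-im}. Your write-up simply supplies the details the paper leaves to the reader by analogy.
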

    \begin{proof}
     The groups $U^{(i)}$ are pro-unipotent. Hence $U^{(i)}(\Fq)$ are pro-$p$ groups. Now, the lemma can proved using \eqref{eq:G fqpoints quo} and Lemma \ref{lemma:pro-quo} and an argument similar to that used to prove Lemma \ref{lemma:pi1 fin-im}. 
     \end{proof}
     \begin{definition}
         Let $G$ satisfy Assumptions \ref{assump1} and \ref{assump2}. Depth of a $\ql$-character $\rho:G(\Fq)\to\mql$ is the minimum non-negative integer $i$ such that $\rho$ factors through the finite quotient $(G/U^{(i)})(\Fq)$.
     \end{definition}
     \begin{remark}\label{remark:smooth char}
     An abstract group homomorphism from a profinite group to $\mql$ is said to be smooth if its kernel contains an open subgroup, or equivalently if is continuous for the discrete topology on $\mql$. Let $\Hom_{\sm}(G(\Fq),\mql)$ denote the set of smooth homomorphisms from $G(\Fq)$ to $\mql$. In the case when $G$ is a pro-algebraic group satisfying Assumptions \ref{assump1} and  \ref{assump2}, we have  $$\Hom_{\sm}(G(\Fq),\mql)=\Hom_{\ell\text{-adic}}(G(\Fq),\mql).$$ Indeed, it is clear that $\Hom_{\sm}(G(\Fq),\mql)\subset\Hom_{\ell\text{-adic}}(G(\Fq),\mql)$ and the reverse inclusion follows as each $\ql$-character of $G(\Fq)$ has a finite image. 
     \end{remark}

     We then have the following version of Lusztig's theorem for pro-algebraic groups:
     \begin{theorem}\label{thm:sh-fn}
         Let $G$ be a connected commutative pro-algebraic group satisfying Assumptions \ref{assump1} and  \ref{assump2}. Then:
         \begin{itemize}
             \item[(i)] For each $n\in \Z_{>0}$, the homomorphism $\Psi_n$ of abelian groups obtained by pulling back $\ql$-characters via the projection $\pi_1(G)\onto G(\F_{q^n})$ from Lemma \ref{lemma:pi1to fqpts} defines an isomorphism of abelian groups $$\Hom_{\sm}(G(\mathbb{F}_{q^n}),\mql)=\Hom_{\ell\text{-adic}}(G(\mathbb{F}_{q^n}),\mql)\xrightarrow[\Psi_n]{\cong} \Hom_{\ell\text{-adic}}(\pi_1(G),\mql)^{\Fr^n}=\CS(G)^{\Fr^n}$$ such that for each $\chi\in \Hom_{\ell\text{-adic}}(G(\mathbb{F}_{q^n}),\mql)$
             $$\Tr_{n, \Psi_n(\chi)}=\chi.$$ 
             \item[(ii)]The above isomorphism preserves depth, i.e. it takes a 
             $\ql$-character of $G(\F_{q^n})$ of depth $d$ to a $\Fr^n$-invariant character sheaf on $G$ of depth $d$.
             \item[(iii)] The norm map $\nm_n$ defined in   \eqref{norm map} induces an isomorphism  $$\widehat{\nm}_n:\Hom_{\ell\text{-adic}}(G(\mathbb{F}_{q}),\mql)\to \Hom_{\ell\text{-adic}}(G(\mathbb{F}_{q^n}),\mql)^{\Fr}$$ and the following diagram is commutative
              \begin{center}
            \begin{tikzcd}
     \Hom_{\ell\text{-adic}}(G(\Fq),\mql)\arrow[r,"\widehat{\nm}_n","\cong"']\arrow[d,"\Psi_1","\cong"'] & \Hom_{\ell\text{-adic}}(G(\mathbb{F}_{q^n}),\mql)^{\Fr}\arrow[r,hook]\arrow[d,"\Psi_n","\cong"'] & \Hom_{\ell\text{-adic}}(G(\mathbb{F}_{q^n}),\mql)\arrow[d,"\Psi_n","\cong"']\\
      \Hom_{\ell\text{-adic}}(\pi_1(G),\mql)^{\Fr}\ar[equal]{r} & \Hom_{\ell\text{-adic}}(\pi_1(G),\mql)^{\Fr}\arrow[r,hook] & \Hom_{\ell\text{-adic}}(\pi_1(G),\mql)^{\Fr^n}.
           \end{tikzcd}
           \end{center}
        \end{itemize}
    
     \end{theorem}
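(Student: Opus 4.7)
The plan is to reduce everything to the quasi-algebraic case treated in Theorem \ref{th: quasi alg sh-fn} by exploiting the filtration $G \supset U^{(0)} \supset U^{(1)} \supset \cdots$ together with the exhaustion results of Lemmas \ref{lemma:pi1 fin-im} and \ref{lemma:fqpts fin im}. Concretely, under Assumptions \ref{assump1} and \ref{assump2}, both sides of the claimed isomorphism in (i) decompose as directed unions indexed by depth: on the sheaf side, $\CS(G) = \bigcup_d \CS(G/U^{(d)})$ from \eqref{eq:CS union}, while on the function side Lemma \ref{lemma:fqpts fin im} gives $\Hom_{\ql}(G(\F_{q^n}),\mql) = \bigcup_d \Hom((G/U^{(d)})(\F_{q^n}),\mql)$, where $(G/U^{(d)})(\F_{q^n}) = G(\F_{q^n})/U^{(d)}(\F_{q^n})$ by Lang's theorem applied to the connected pro-unipotent $U^{(d)}$ (this uses \eqref{eq:G fqpoints quo}). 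The natural compatibility maps between these directed systems are induced by the projections $G \to G/U^{(d)}$.

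For part (i), I would construct $\Psi_n$ on the level of each $G/U^{(d)}$ using Theorem \ref{th: quasi alg sh-fn} and check that these morphisms are compatible with the transition maps $G/U^{(d+1)} \to G/U^{(d)}$; this amounts to observing that both the surjection $\pi_1(G) \twoheadrightarrow G(\F_{q^n})$ of Lemma \ref{lemma:pi1to fqpts} and the Frobenius action are natural in $G$, so pulling back along $G \to G/U^{(d)}$ intertwines the two constructions. Taking the direct limit over $d$ then yields the asserted isomorphism for $G$, and the trace formula $\Tr_{n,\Psi_n(\chi)} = \chi$ is inherited level by level since the trace of Frobenius computation is preserved under pullback along the projection $G \to G/U^{(d)}$. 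Part (ii) is then essentially built into the construction: a $\ql$-character of $G(\F_{q^n})$ has depth $\leq d$ if and only if it lies in the image from $(G/U^{(d)})(\F_{q^n})$, which under $\Psi_n$ corresponds exactly to character sheaves pulled back from $G/U^{(d)}$, i.e. to character sheaves of depth $\leq d$.

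For part (iii), I would argue that the norm map $\nm_n$ is compatible with the projections $G \to G/U^{(d)}$ (being defined by the same formula \eqref{norm map} in both cases), so the bijectivity of $\widehat{\nm}_n$ at each quasi-algebraic level (given by Theorem \ref{th: quasi alg sh-fn}(ii)) passes to the limit. The commutativity of the diagram in (iii) likewise reduces to the commutativity of the analogous diagram for each quotient $G/U^{(d)}$, which is Theorem \ref{th: quasi alg sh-fn}(iii).

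The technical point that requires the most care is the interplay between the inverse limit defining $\pi_1(G)$ (via \eqref{eq:pi1,inv limit}) and the inverse limit defining $G(\F_{q^n})$ (via \eqref{eq:G fqpoints quo}): one must verify that the Lang-theoretic identification $\pi_1(H)_{\Fr^n} \cong H(\F_{q^n})$ from Lemma \ref{lemma:pi1to fqpts} is natural with respect to quotients by $\Fr$-stable connected normal subgroups, so that taking $\Fr^n$-coinvariants of $\pi_1(G) = \varprojlim \pi_1(G/U^{(d)})$ produces $\varprojlim (G/U^{(d)})(\F_{q^n}) = G(\F_{q^n})$. Once this naturality is in hand, everything assembles formally; the remaining assertions about depth and the norm map are immediate consequences of the corresponding assertions for the quasi-algebraic quotients.
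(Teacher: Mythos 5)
Your proposal reduces to the quasi-algebraic case of Theorem \ref{th: quasi alg sh-fn} via the depth filtration, using Lemmas \ref{lemma:pi1 fin-im} and \ref{lemma:fqpts fin im} to exhaust both sides and the compatibility of the projections $G\to G/U^{(d)}$ with Frobenius and the Lang identification to glue the level-$d$ isomorphisms; this is exactly the paper's argument, with the naturality you flag as the ``technical point'' being precisely what the paper records in its commutative diagram of surjections $\pi_1(G)\onto G(\F_{q^n})\onto G(\Fq)$ and their images in the quotient.
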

     \begin{proof}
     By Remark \ref{remark:smooth char}, we have $\Hom_{\sm}(G(\mathbb{F}_{q^n}),\mql)=\Hom_{\ell\text{-adic}}(G(\mathbb{F}_{q^n}),\mql)$. For each $d\in \Z_{\geq 0}$, $G/U^{(d)}$ is a quasi-algebraic group with an $\Fq$-structure, hence by Theorem \ref{th: quasi alg sh-fn} (along with Lang's Theorem) we have 
     \[\Hom(G(\F_{q^n})/U^{(d)}(\F_{q^n}),\mql)\cong \CS(G/U^{(d)})^{\Fr^n}.\]
     By Lemma \ref{lemma:fqpts fin im} (resp. \ref{lemma:pi1 fin-im}) all the $\ql$-characters of $G(\F_{q^n})$ (resp. character sheaves on $G$) have finite depth. Moreover, since the projection $G\to G/U^{(d)}$ is compatible with the $\Fq$-structures we have the  commutative diagram
     \begin{equation}
     \xymatrix{
     \pi_1(G)\ar@{->>}[r] \ar@{->>}[d]& G(\F_{q^n})\ar@{->>}[r]^{\nm_n}\ar@{->>}[d]& G(\Fq)\ar@{->>}[d]\\
     \pi_1(G/U^{(d)})\ar@{->>}[r] & (G/U^{(d)})(\F_{q^n})\ar@{->>}[r]^{\nm_n}& (G/U^{(d)})(\Fq).
     }\end{equation}
     The theorem now follows from Theorem \ref{th: quasi alg sh-fn}.
      \end{proof}
      \begin{remark}
     A version of Theorem \ref{thm:sh-fn} can be stated for all connected commutative pro-algebraic groups with an $\Fq$-structure, with its proof on similar lines as given in \cite{BD06} for quasi-algebraic groups. Since the pro-algebraic groups we deal with in this paper satisfy the additional assumptions in the theorem above, we only state it under these assumptions.
     \end{remark}   
     \subsection{N\'eron Models and Greenberg Functor}\label{sec:nerongreenberg}
     Given a torus $T$ over $\K$ we wish to get a smooth model of $T$ over $\O$. We consider the connected N\'eron model of $T$ for this purpose. It is the connected component containing the identity element of the lft-N\'eron model. We denote the connected N\'eron model (which is a connected smooth group scheme over $\O$) by $\T$.  The connected N\'eron model $\T$ is functorial in tori $T$ defined over $\K$ (see \cite[\S B.7]{KP23}). For a split torus $T$ over $\K$, the co-ordinate ring of the connected N\'eron model $\T$ is given by
    \begin{equation}
        \O[\T]=\O[\cchg(T)].
    \end{equation} 
    In particular for the multiplicative group $\G_m$ over $\K$, the connected N\'eron model is $\G_{m,\O}$ the multiplicative group over $\O$. For an induced torus $T=\res_{\E/\K}\G_m$, where $\E$ is a finite separable extension of $\K$, the connected N\'eron model is given by $$\T=\res_{\mathcal{O}_{\E}/\O}\G_{m,\mathcal{O}_{\E}},$$ where $\mathcal{O}_{\E}$ denotes the ring of the integers of $\E$. 
    \par Let $\mathfrak{m}=\langle \varpi \rangle$ be the maximal ideal of $\O$ and  $k$ the algebraically closed residue field.  In \cite{Serre1961}, the group of units $\O^\times$ is given a structure of commutative  pro-algebraic group. This is done by giving ${(\O/\mathfrak{m}^{n})}^\times$ the structure of a quasi-algebraic group over $k$ and then using $\O^{\times}\cong \varprojlim{(\O/\mathfrak{m}^{n})}^\times$. 
     This construction can be generalized for any torus using the Greenberg functor applied to its connected N\'eron model. We recall the definition here. If $k$ has positive characteristic and $R$ is a perfect $k$-algebra, let $W(R)$ denote the ring of unramified Witt vectors with values in $R$. 
     
     Define the following functors: For any perfect $k$-algebra $R$
     \begin{equation}
         \mathbb{W}(R):=
         \begin{cases}
         R[[\varpi]] & \text{if $\ch(k)=\ch(\O)$} \\
         W(R)\otimes_{W(k)}\O & \text{otherwise} 
         \end{cases}
     \end{equation}
     \begin{equation}
         \mathbb{W}\left[\frac{1}{\varpi}\right](R):=
         \begin{cases}
         R((\varpi)) & \text{if $\ch(k)=\ch(\O)$} \\
         (W(R)\otimes_{W(k)}\O)\left[\frac{1}{\varpi}\right] & \text{otherwise} 
         \end{cases}
     \end{equation}
    Now for a quasi finite scheme $X$ over $\O$, a scheme $Y$ over $\K$ and a perfect $k$-algebra $R$ define
     \begin{equation}
        \lp X(R)=X(\mathbb{W}(R))
     \end{equation}
     \begin{equation}
     \lpf Y(R)=Y(\mathbb{W}\left[\frac{1}{\varpi}\right](R)).
     \end{equation}
     Observe that $\lp X(k)=X(\O)$ and $\lpf Y(k)=Y(\K)$. The functor $\lp{}$ is called the Greenberg functor and is defined in \cite{Gr1961},\cite{BG2018}. For a smooth group scheme $G$ over $\O$, $L^+G$ is a pro-algebraic group over $k$. If $G$ is also commutative, then $L^+G$ is a commutative pro-algebraic group.
     \par Throughout we will use $\lp{T}$ to denote the Greenberg functor applied to the connected N\'eron model of the torus $T$ over $\K$. It is a connected commutative pro-algebraic group over $k$. We use $\elp{T}$ to denote the Greenberg functor applied to the connected N\'eron model of the torus $T$ defined over a finite separable extension $\E$ of $\K$. For a torus $T$ over $\K$, $\elp{T}$ would mean the Greenberg functor applied to the connected N\'eron model of the torus $T_{\E}$ over $\E$ obtained by extension of scalars.

     For the connected N\'eron model, $\ \T(\O)=T(\K)^{0}\subset T(\K)$ where $T(\K)^{0}$ is the Iwahori subgroup. By \cite[\S 5]{JKYu[2015]} we have a filtration of $T(\K)$ by connected schematic subgroups $$T(\K)\supset T(\K)^{0}\supset U_{0} \supset  U_{1} \dots$$ where the subgroups $U_{n}:=T(\K)_{n+1}^{mc}$ for $n\geq 0$ are defined in \cite[\S 5]{JKYu[2015]}. Since $U_{n}$ is connected schematic we have smooth connected $\O$- group schemes $\T_n$ with $\T_n(\O)=U_n$. Hence we have the filtration of connected commutative pro-algebraic groups $L^+T\supset L^+\T_0\supset L^+\T_1\supset\cdots$. It is proved in \cite[\S5.2]{JKYu[2015]} that each $L^+\T_{n}/L^+\T_{n+1}$ is unipotent and that $\lp T=\varprojlim L^+T/\lp \T_n$. Hence $\lp{T}$ satisfies the Assumption \ref{assump1}. In case $T$ is defined over a local field $K$ (with finite residue field), then the filtration is $\Fr$-stable and hence Assumption \ref{assump2} is also satisfied in this case.

     \subsection{Serre's local class field theory and the canonical class}\label{sec:canonclass}
     Consider the reciprocity map $\re: K^{\times} \to W_{K}^{\ab}$ for a local field $K$ with finite residue field. We first describe the inverse of this map as defined in \cite[Ch XI,\S3]{Serre1967}. If $E/K$ is a finite Galois extension, then we have the isomorphisms (see {\it{loc. cit.}})
     \begin{equation}
     \begin{split}
     \gal(E/K)^{\ab}\cong \tH^{-2}(\gal(E/K), \Z)\\
     K^{\times}/\nm E^{\times}\cong \tH^{0}(\gal(E/K),E^{\times}). 
     \end{split} 
     \end{equation}
     Here $\nm: E^\times\to K^\times$ is the norm map defined as $\nm(x):=\prod_{\sigma\in \gal(E/K)} \sigma(x)$. Then the ``inverse of the reciprocity map'', namely $\gal(E/K)^{\ab}\to K^{\times}/\nm E^{\times}$, is given by taking cup product with the fundamental class in $\tH^{2}(\gal(E/K), E^{\times})$. 
     The reciprocity map $\re_E:E^{\times}\to W_{E}^{\ab}$  gives us a map of the cohomology groups $$\tH^{2}(\gal(E/K), E^{\times})\to\tH^{2}(\gal(E/K), W_{E}^{\ab}).$$ Under this map the image of the fundamental class corresponds to the short exact sequence \eqref{weil}, i.e.
     \begin{equation}
         1\to W_{E}^{\ab}\to W_{E/K}\to\gal(E/K)\to 1.
     \end{equation}
     
     Serre in \cite[\S 2.5]{Serre1961} gives a similar construction of the inverse map $\theta: \pi_1(\lp{\G_m}) \to \gal_{\K}^{\ab}$ to \eqref{th}. We recall the construction here.  As before, $\E$ will denote a finite Galois extension of $\K$ and $\Gamma$ will denote the Galois group. The ring of integers of $\E$ will be denoted by $\Ocal_{\E}$. 
      
      The natural action of $\Gamma$ on $\Ocal_{\E}^\times$ defines a $\Gamma$-action on the pro-algebraic group $\elp{\mathbb{G}_m}$, 
 whose $\Gamma$-fixed points are $\lp\G_m$. We also get a $\Gamma$-action on its universal cover  $\overline{\elp{\mathbb{G}_m}}$. Then \eqref{eq:defnunicover} gives us an exact sequence of $\Gamma$-modules:
    \begin{equation}
	     0\to \pi_1(\elp{\mathbb{G}_m})\to \overline{\elp{\mathbb{G}_m}}(k)\to \elp{\mathbb{G}_m}(k)\to 0. 
	     \end{equation}

	For any subgroup $H\subseteq \Gamma$ the above sequence gives the following long exact sequence for Tate cohomology:
	     \begin{small}
	     \begin{equation*}
	     \begin{split}
	          \to \tH^{q+1}(H,\overline{\elp{\mathbb{G}_m}}(k))\to \tH^{q+1}(H,\elp{\mathbb{G}_m}(k))\to   
              \tH^{q+2}(H,\pi_1(\elp{\mathbb{G}_m}))\to\tH^{q+2}(H,\overline{\elp{\mathbb{G}_m}}(k))\to
	     \end{split}
	     \end{equation*}
	     \end{small}
	     Since $\overline{\elp{\mathbb{G}_m}}(k)$ is cohomologically trivial \cite[\S 2.5]{Serre1961}, for all $q\in \Z$ we have an isomorphism:
	     \begin{equation} \label{iso1}
	      \tH^{q+1}(H,\elp{\mathbb{G}_m}(k))\xto{\cong} \tH^{q+2}(H,\pi_1(\elp{\mathbb{G}_m})).
	     \end{equation}
	     \\ We also have the short exact sequence of $\Gamma$-modules:
	     \begin{equation*}
         0\to \elp\G_m(k)=\Ocal_{\E}^{\times} \to \E^\times \to \Z\to 0.
	     \end{equation*}
	     This gives a long exact sequence:
       \begin{equation*}
     \dots\to \tH^q(H,\Ocal_{\E}^{\times})\to \tH^q(H,\E^\times)\to \tH^q(H,\Z)\to \tH^{q+1}(H, \Ocal_{\E}^{\times})\to \tH^{q+1}(H, \E^\times)\to \dots
	  \end{equation*}
	 Since $\E^\times$ is also cohomologically trivial \cite[\S2.2 Prop. 1]{Serre1961}, we get an isomorphism:
	  \begin{equation} \label{iso2}
	     \tH^q(H,\Z)\xto{\cong} \tH^{q+1}(H,\elp{\mathbb{G}_m}(k)). 
	  \end{equation}
	  \\From \eqref{iso1} and \eqref{iso2} for all $q\in \Z$ and for all subgroups $H$ of $\Gamma$ we have the isomorphism:
	  \begin{equation}
	      \tH^q(H,\Z)\xrightarrow[\cong]{\delta_H} \tH^{q+2}(H,\pi_1(\elp{\mathbb{G}_m})).
	  \end{equation}
        Using $q=-1$ and $q=0$ for any subgroup $H\subset\Gamma$,
	  \begin{equation}\label{eq:canonicalclass}
       \begin{split}
        \tH^1(H, \pi_1(\elp{\mathbb{G}_m}))={1}\\
        \tH^2(H,\pi_1(\elp{\mathbb{G}_m}))\cong \Z/|H|\Z.
        \end{split}
	  \end{equation}Thus $\tH^2(H,\pi_1(\elp{\mathbb{G}_m}))$ is a cyclic group. The element $u_{\E/\K}:=\delta_{\Gamma}(\overline 1)$ generates $\tH^2(\Gamma,\pi_1(\elp{\mathbb{G}_m}))$. For any subgroup $H$, its restriction to $H$, $\res(u_{\E/\K}) \in \tH^2(H,\pi_1(\elp{\mathbb{G}_m}))$ is also a generator. 
     The element $u_{\E/\K}$ is called the canonical class. 
       Cup product with the canonical class gives an isomorphism: 
       \begin{equation}\label{eq:defn-delta}
      \delta_{\E/\K}:\Gamma^{\ab}=\tH^{-2}(\Gamma,\Z)\xrightarrow[\cong]{\_\cup u_{\E/\K}}\tH^0(\Gamma,\pi_1(\elp \G_m))=\pi_1(\lp \G_m)/\nm(\pi_1(\elp \G_m)).
       \end{equation}
     Here $\nm$ is the norm map $\pi_1(\lp{\G_{m,\E}}) \to \pi_1(\lp{\G_m})$ for the $\Gamma$-action.
     The induced map $\gal_{\K}^{\ab}\to\pi_1(\lp\G_m)$ at the level of inverse limits   is the inverse of the map $\theta:\pi_1(\lp\G_m)\to \gal_{\K}^{\ab}$ (see \cite[2.5]{Serre1961}).
\begin{remark}\label{rk:canonicalclass}
The map $\theta_{\E}:\pi_1(\elp\G_m)\xto{\cong} \gal_{\E}^{\ab}$ gives us a map on corresponding cohomology groups:
$$\tH^2(\Gamma,\pi_1(\elp{\mathbb{G}_m}))\to\tH^2(\Gamma,\gal_{\E}^{\ab}).$$ The image of $u_{\E/\K}$ under this map (which we also denote by $u_{\E/\K}$ ) corresponds to the exact sequence:
     \begin{equation}
         1\to \gal_{\E}^{\ab}\to\mathscr{G}_{\E/\K}\to \Gamma \to 1.
     \end{equation}
      \end{remark}

    \section{Proofs of the Main Results}\label{sec:pfmain}
	\subsection{Proof of Theorem \ref{thm:main1}}\label{sec:pfmain1}
    In this section we compute the fundamental group of the pro-algebraic group $\lp{T}$ for a torus $T$ defined over $\K$. The fundamental group of the pro-algebraic group $\lp \G_m$ is known due to Serre (See \cite{Serre1961}). We use this result and the fact that the torus $T$ splits over a finite Galois extension $\E/\K$ to compute $\pi_1(\lp T)$. We first prove the following:
    \begin{lemma}{\label{gal}}
        Let $\Gamma$ be a finite group acting on a commutative pro-algebraic group $G$ defined over $k$. Then we have $\pi_1(G^{\Gamma})=\pi_1(G)^\Gamma$.
   \end{lemma}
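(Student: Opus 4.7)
The plan is to pass through the universal cover $\overline{G}$, which inherits a canonical $\Gamma$-action by functoriality, and to identify the universal cover of $(G^\Gamma)^\circ$ with $\overline{G}^\Gamma$. Since $\pi_1$ is unchanged upon passing to the identity component, and $\Gamma$ preserves $G^\circ$, I may assume at the outset that $G$ is connected. The defining sequence of the universal cover then becomes a short exact sequence
\[
0 \to \pi_1(G) \to \overline{G} \to G \to 0
\]
in the category of commutative pro-algebraic groups over $k$ equipped with a $\Gamma$-action; in particular, $\pi_1(G)$ acquires a $\Gamma$-module structure compatible with the one coming from the quotient description.

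Next I apply the left exact functor of $\Gamma$-invariants. This gives
\[
0 \to \pi_1(G)^\Gamma \to \overline{G}^\Gamma \to G^\Gamma,
\]
and the cokernel of the last arrow lives inside the profinite (hence $0$-dimensional) group $H^1(\Gamma, \pi_1(G))$. Consequently, the image of $\overline{G}^\Gamma$ in $G^\Gamma$ is of finite index and contains the identity component $(G^\Gamma)^\circ$.

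The key technical claim I need is that $\overline{G}^\Gamma$ is itself a connected and simply connected commutative pro-algebraic group. Granting this, the induced surjection $\overline{G}^\Gamma \twoheadrightarrow (G^\Gamma)^\circ$ presents $\overline{G}^\Gamma$ as the universal cover of $(G^\Gamma)^\circ$, and reading off the kernel gives $\pi_1(G^\Gamma) = \pi_1((G^\Gamma)^\circ) = \pi_1(G)^\Gamma$, as desired.

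The main obstacle is establishing this connectedness and simple-connectedness of $\overline{G}^\Gamma$. I would approach it by invoking Serre's structure theory \cite{Serre1960} for connected simply connected commutative pro-algebraic groups, decomposing $\overline{G}$ into building blocks of vector, pro-unipotent, and torus-universal-cover type, and checking on each block that a linear action of a finite group has well-behaved (connected, simply connected) fixed subgroup. An alternative I would keep in reserve is to use Serre's description of $\pi_1(G)$ as the inverse limit of kernels $\ker(f)$ over isogenies $f : H \to G$, combined with a $\Gamma$-equivariant lifting property for such isogenies, which follows from the projectivity of the universal cover in the appropriate category of commutative pro-algebraic groups with $\Gamma$-action.
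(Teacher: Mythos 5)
Your approach via the universal cover is genuinely different from the paper's. The paper observes that $G^\Gamma$ is, by construction, the kernel of the cochain differential $d : C^0(\Gamma, G) \to C^1(\Gamma, G)$, where each $C^l(\Gamma, G)$ is a finite product of copies of $G$; since $\pi_1$ is left exact and commutes with finite products, applying $\pi_1$ to the exact sequence $0 \to G^\Gamma \to C^0(\Gamma, G) \to C^1(\Gamma, G)$ immediately yields $\pi_1(G^\Gamma) = \ker\bigl(d : C^0(\Gamma, \pi_1(G)) \to C^1(\Gamma, \pi_1(G))\bigr) = \pi_1(G)^\Gamma$. That argument, borrowed from Serre \cite[\S 2.3 Prop.\ 4]{Serre1961}, is a few lines long, needs no reduction to the connected case, and invokes no structure theory.

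Your proof has a genuine gap at precisely the step you flag as the ``main obstacle'': the claim that $\overline{G}^\Gamma$ is connected and simply connected is stated but not proved. Moreover, there is a circularity lurking in the most natural route to its simple-connectedness: that assertion is exactly $\pi_1(\overline{G}^\Gamma) = \pi_1(\overline{G})^\Gamma = 0$, i.e.\ the lemma itself applied to $\overline{G}$. Your first proposed route (decompose $\overline{G}$ by Serre's structure theory and check each block) is not routine; in particular, when $\ch(k) = p > 0$ and $p$ divides $|\Gamma|$ one cannot average by $\tfrac{1}{|\Gamma|}\sum_\gamma \gamma$ to split off the invariants in the pro-unipotent pieces, and the connectedness of $\Gamma$-fixed points is exactly the kind of statement (cf.\ Steinberg-type theorems) that tends to require a real argument. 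Your fallback, that $\overline{G}$ is projective in the category of $\Gamma$-equivariant pro-algebraic groups, also needs justification: projectivity in $\mathscr{P}$ does not automatically transfer to the equivariant category. As written the argument is therefore incomplete, and filling the gap along the lines you suggest would be substantially more work than the one-step cochain-complex argument the paper uses.
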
 
	\begin{proof} 
    
    \par The proof of this lemma is similar to that of \cite[\S 2.3 Prop. 4]{Serre1961}. An \(l\)-cochain is a function \(\Gamma^l\to G\). Let $C^l(\Gamma,G)$ be the group of $l$-cochains. This is a pro-algebraic group as it is isomorphic to a product of a finite number of copies of $G$. The boundary map $C^l(\Gamma, G) \xrightarrow{d} C^{l+1}(\Gamma,G)$ is a morphism of pro-algebraic groups, which gives us the following exact sequence: 
	\begin{equation}
	0\xrightarrow{} {G}^\Gamma \xrightarrow{} C^0(\Gamma,G) \xrightarrow{d} C^1(\Gamma,G).
	\end{equation}
	The functor $\pi_1$ is left-exact \cite[\S 10.2]{Serre1960}, and hence the following sequence is exact.
	\begin{equation}
	0\xrightarrow{} \pi_1({G}^\Gamma) \xrightarrow{} \pi_1(C^0(\Gamma,G)) \xrightarrow{\pi_1(d)} \pi_1(C^1(\Gamma,G)).
	\end{equation}
    The functor \(\pi_1\) commutes with products. The pro-algebraic \(C^l(\Gamma,G)\) is isomorphic to a product of \(|\Gamma|^l\)-copies of \(G\). Hence \(\pi_1(C^l(\Gamma, G))\cong C^l(\Gamma, \pi_1(G))\).
	The lemma now follows since the above exact sequence gives us the following exact sequence:
	\begin{equation}\label{eq:transfer}
	0\xrightarrow{} \pi_1(G^\Gamma) \xrightarrow{} C^0(\Gamma,\pi_1(G)) \xrightarrow{\pi_1(d)} C^1 (\Gamma, \pi_1(G)).	
	\end{equation}  
	\end{proof}
   \par We will now state and prove a more precise version of Theorem \ref{thm:main1}. Let $\E$ be a finite Galois extension of $\K$.
   Let $\E'/\K$ be a finite Galois extension containing $\E$, then $\gal_{\E'}$ is a finite index subgroup of $\gal_{\E}$ and we have the short exact sequence
     \begin{equation}
      0\to \gal_{\E'}\to \gal_{\E}\to \gal(\E'/\E)\to 0.
  \end{equation}
   We have the transfer homomorphism $\tran:\gal_{\E}^{\ab}\to \gal_{\E'}^{\ab}$ defined in \cite[Ch. VII \S 8]{Serre1967}. 
  
  The group $\gal_{\E}$ acts on the groups $\gal_{\E'}$ and $\gal_{\E}$ by conjugation and hence on their abelianizations. On $\gal_{\E'}^{\ab}$ and $\gal_{\E}^{\ab}$, the finite index normal subgroup $\gal_{\E'}$ of $\gal_{\E}$ acts trivially. Hence there is an action of the quotient $\gal(\E'/\E)$ on them. On $\gal_{\E}^{\ab}$ this action is trivial. By  \cite[\S 2.4, Prop. 7]{Serre1961} the transfer map is injective and gives an isomorphism
   $$\tran:\gal_{\E}^{\ab}\xto{\cong}\left({\gal_{\E'}^{\ab}}\right)^{\gal(\E'/\E)}\subset \gal_{\E'}^{\ab}.$$
    \par Let $T$ be a torus defined over $\K$ and split over a finite Galois extension $\E/\K$. Let $\E'/\K$ be a finite Galois extension containing $\E$. Then $\gal(\E'/\E)$ acts trivially on the co-character lattice $\cchg(T)$. Hence we have the isomorphism
    $$\tran:\cchg(T)\otimes\gal_{\E}^{\ab}\xto{\cong} (\cchg(T)\otimes{\gal_{\E'}^{\ab}})^{\gal(\E'/\E)}.$$
    Since the finite group $\gal(\E/\K)$ acts on both the domain and the target of the above map and it is a $\gal(\E/\K)$-module map, we have the following transfer isomorphism: 
  \begin{equation}\label{eq:traniso}
   \tran:({\cchg(T)\otimes\gal_{\E}^{\ab}})^{\gal(\E/\K)}\to (({{\cchg(T)\otimes\gal_{\E'}^{\ab})})^{\gal(\E'/\E)}})^{\gal(\E/\K)}=({\cchg(T)\otimes\gal_{\E'}^{\ab}})^{\gal(\E'/\K)}.   
  \end{equation}

   \begin{theorem}\label{thm:fundamentalgroup}
    For a torus $T$ defined over $\K$ and split over $\E$, we have a canonical isomorphism of topological groups:
    \begin{equation}
    \pi_1(\lp T)\xrightarrow[\cong]{\theta_{\E/\K}} {(\cchg(T)\otimes {\gal_{\E}^{\ab}})}^{\gal(\E/\K)}
    \end{equation}
    satisfying the following:\\
    (i) If $\E'/\K$ is a finite Galois extension containing $\E$, then the following diagram is commutative where $\tran$ is the transfer isomorphism \eqref{eq:traniso}:
    \begin{equation}\label{diag:transfer}
     \begin{tikzcd}
     & \pi_1(\lp T)\arrow[dl,"\theta_{\E/\K}"',"\cong"]\arrow[dr,"\theta_{\E'/\K}","\cong"'] & \\
    ({\cchg(T)\otimes\gal_{\E}^{\ab}})^{\gal(\E/\K)}\arrow[rr,"\tran","\cong"'] &        &({\cchg(T)\otimes\gal_{\E'}^{\ab}})^{\gal(\E'/\K)}.  
   \end{tikzcd}   
    \end{equation}
    (ii) (Functoriality.) Let $f:T_1\to T_2$ be a morphism of tori over $\K$. Let $\E/\K$ be a finite Galois extension over which both $T_1,T_2$ split. Then the following diagram is commutative:
    \begin{equation}\label{diag:natural}
        \begin{tikzcd}
            \pi_1(\lp T_1) \arrow[r,"\pi_1(f)"]\arrow[d,"\theta_{\E/\K}"] & \pi_1(\lp T_2)\arrow[d,"\theta_{\E/\K}"]\\
            (\cchg(T_1)\otimes\gal_{\E}^{\ab})^{\gal(\E/\K)}\arrow[r]& (\cchg(T_2)\otimes\gal_{\E}^{\ab})^{\gal(\E/\K)}.
        \end{tikzcd}
    \end{equation}
    (iii) Consider the induced torus $T'=\res_{\K'/\K}\G_m$ where $\K'/\K$ is a finite separable extension. In this case $\lp T'\cong\lp_{\K'} \G_m$ since the connected N\'eron model $\T'=\res_{\Ocal_{\K'}/\O}\G_m$. Let $\E/\K$ be a finite Galois extension containing $\K'$. Then  we have the following commutative diagram:
    \begin{equation}\label{diag:induced}
       \begin{tikzcd}
         \pi_1(\lp_{\K'} \G_m) \arrow[d,"\cong","\theta_{\K'}"']\arrow[r,"\cong"] &\pi_1(\lp T')\arrow[d,"\cong","\theta_{\E/\K}"']\\
         \gal_{\K'}^{\ab}\arrow[r,"\cong"]  & (\cchg(T')\otimes\gal_{\E}^{\ab})^{\gal(\E/\K)}.
           \end{tikzcd}
    \end{equation}

    \end{theorem}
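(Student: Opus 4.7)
The plan is to reduce the computation to Serre's theorem for $\G_m$ by splitting $T$ over $\E$. Since $T$ splits over $\E$, the $\E$-torus $T_\E$ is canonically isomorphic to $\cchg(T)\otimes_\Z \G_{m,\E}$, and this description descends to the connected Néron model: $\T_{\Ocal_\E}\cong \cchg(T)\otimes_\Z \G_{m,\Ocal_\E}$. Applying the Greenberg functor, which commutes with finite direct products, yields a canonical $\gal(\E/\K)$-equivariant isomorphism $\elp T\cong \cchg(T)\otimes_\Z \elp \G_m$ of commutative pro-algebraic groups over $k$. The first step is therefore to record this and to verify, using either the functoriality of the connected Néron model under unramified base change together with étale descent along $\Ocal_\E/\Ocal$ (here one uses that $k$ is algebraically closed so that the residue extension is trivial), that $\lp T$ is canonically the fixed-point subgroup $(\elp T)^{\gal(\E/\K)}$ in the category $\mathscr{P}$.

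Once this is in place, the main isomorphism is assembled in three moves. First, since $\pi_1$ is additive on the category $\mathscr{P}$ of commutative pro-algebraic groups and commutes with inverse limits, tensoring with the finite free abelian group $\cchg(T)$ gives $\pi_1(\elp T)\cong \cchg(T)\otimes \pi_1(\elp \G_m)$ as $\gal(\E/\K)$-modules. Second, Serre's theorem supplies a canonical $\gal(\E/\K)$-equivariant isomorphism $\theta_\E:\pi_1(\elp \G_m)\xto{\cong} \gal_\E^{\ab}$, whence $\pi_1(\elp T)\cong \cchg(T)\otimes \gal_\E^{\ab}$. Third, applying Lemma \ref{gal} to the $\gal(\E/\K)$-action on $\elp T$ yields
\[
\pi_1(\lp T)=\pi_1\bigl((\elp T)^{\gal(\E/\K)}\bigr)=\pi_1(\elp T)^{\gal(\E/\K)}\cong \bigl(\cchg(T)\otimes \gal_\E^{\ab}\bigr)^{\gal(\E/\K)},
\]
and this composite is the desired $\theta_{\E/\K}$. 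The functoriality statement (ii) is then automatic: a morphism $f:T_1\to T_2$ of tori split over $\E$ induces a $\gal(\E/\K)$-equivariant map $\cchg(f)\otimes \mathrm{id}$ on $\elp$-groups, and every step above is natural in the torus.

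For the transfer compatibility (i), one applies the construction to both $\E$ and $\E'$ and invokes the analogous statement for $\G_m$, which is exactly \cite[\S 2.4, Prop. 7]{Serre1961} in Serre's paper: the diagram
\[
\xymatrix{
\pi_1(\elp\G_m)\ar[r]^-{\theta_\E}\ar[d] & \gal_\E^{\ab}\ar[d]^{\tran}\\
\pi_1(\lp_{\E'}\G_m)\ar[r]^-{\theta_{\E'}} & \gal_{\E'}^{\ab}
}
\]
commutes, where the left vertical arrow is the map induced by viewing $\elp \G_m=(\lp_{\E'}\G_m)^{\gal(\E'/\E)}$. Tensoring with $\cchg(T)$ (on which $\gal(\E'/\E)$ acts trivially) and then taking $\gal(\E/\K)$-fixed points produces the diagram \eqref{diag:transfer}.

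Property (iii) for the induced torus $T'=\res_{\K'/\K}\G_m$ is the step that requires the most care, and is the place I expect the main obstacle to lie. Here $\cchg(T')$ is canonically the induced $\gal(\E/\K)$-module $\Z[\gal(\E/\K)/\gal(\E/\K')]$, so
\[
\bigl(\cchg(T')\otimes \gal_\E^{\ab}\bigr)^{\gal(\E/\K)}\cong \bigl(\gal_\E^{\ab}\bigr)^{\gal(\E/\K')}\cong \gal_{\K'}^{\ab},
\]
the last isomorphism being Serre's transfer theorem. Matching this composite with the more direct isomorphism $\lp T'\cong \lp_{\K'}\G_m$ coming from $\T'=\res_{\Ocal_{\K'}/\Ocal}\G_m$ requires tracking through how Greenberg commutes with Weil restriction and how the canonical class $u_{\E/\K'}$ relates to $u_{\E/\K}$ under Shapiro's lemma. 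Once this identification is established for $T=\G_m$ (i.e.\ showing that the construction above recovers Serre's original $\theta_\K$), (iii) follows by combining it with (ii) applied to the norm/unit maps for induced tori. I would verify (iii) by reducing to the case $\K'=\K$ via the tower of extensions and by using that both $\theta_{\E/\K}$ and the transfer map on the right-hand side are natural in $\E$.
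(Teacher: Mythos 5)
Your overall strategy matches the paper's: descend to the splitting field $\E$, identify $\pi_1(\elp T)\cong\cchg(T)\otimes\pi_1(\elp\G_m)\cong\cchg(T)\otimes\gal_\E^{\ab}$ via Serre, and then take $\gal(\E/\K)$-invariants using the analogue of \cite[\S 2.3 Prop.~4]{Serre1961} (the paper's Lemma~\ref{gal}). Parts (i) and (ii) are handled essentially as in the paper. However there are two genuine gaps.

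First, the claim that $\lp T$ \emph{equals} the fixed-point subgroup $(\elp T)^{\gal(\E/\K)}$ is false in general, and the reasoning offered for it is broken. Since $k$ is algebraically closed, every nontrivial finite Galois extension $\E/\K$ is totally ramified, so $\Ocal_\E/\O$ is never \'etale; ``\'etale descent along $\Ocal_\E/\O$'' therefore does not apply. Moreover the equality itself can fail: for $T=\res^1_{\E/\K}\G_m$ the norm-one torus of a ramified quadratic extension, $(\elp T)^{\gal(\E/\K)}(k)=\{x\in\Ocal_\E^\times:\nm_{\E/\K}(x)=1\}$ has two connected components while $\lp T=\T(\O)$ is the identity component. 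What the paper proves, and what you need, is only that $\lp T=\bigl((\elp T)^{\gal(\E/\K)}\bigr)^\circ$ with finite component group; this follows because $\T(\Ocal_\E)^{\gal(\E/\K)}/\T(\O)$ is finite by finiteness of the relevant Tate cohomology groups. Since $\pi_1$ depends only on the identity component, the conclusion $\pi_1(\lp T)=\pi_1\bigl((\elp T)^{\gal(\E/\K)}\bigr)$ survives, but your argument does not establish it as stated.

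Second, part (iii) is only sketched, and you explicitly flag it as the step where the ``main obstacle'' lies. The paper disposes of it cleanly: by Frobenius reciprocity one has, for any $\gal(\E/\K)$-module $M$, a functorial identification $M^{\gal(\E/\K')}\cong(\cchg(T')\otimes M)^{\gal(\E/\K)}$; applying this with $M=\pi_1(\elp\G_m)$ and $M=\gal_\E^{\ab}$ yields the right square in \eqref{diag:induced}, while the left square is precisely Serre's functoriality of $\theta$ from \cite[\S 2.4 Prop.~7]{Serre1961}. Your plan of ``reducing to $\K'=\K$ via the tower and using (ii)'' does not by itself supply this comparison, since (ii) concerns morphisms of tori over the fixed base $\K$, whereas (iii) requires matching constructions over two different base fields $\K$ and $\K'$.
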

    \begin{proof}
        \par We have that $T$ is defined over $\K$ and split over a finite Galois extension $\E/\K$. The action of $\gal(\E/\K)$ on $\T(\Ocal_{\E})\subseteq T(\E)$ induces a $\gal(\E/\K)$-action on the pro-algebraic group $\elp{T}$ as well as the full loop group $L_{\E}T$ such that $LT=(L_{\E}T)^{\gal(\E/\K)}$. Hence we have
        \begin{equation}\label{eqn:ltcirc}
        L^+T=(LT)^\circ=\left((L_{\E}T)^{\gal(\E/\K)}\right)^\circ=\left(\left((L_{\E}T)^\circ\right)^{\gal(\E/\K)}\right)^\circ=\left((L^+_{\E}T)^{\gal(\E/\K)}\right)^\circ.
        \end{equation}

    \noindent In fact, we will see in Remark \ref{rk:finitepi0} below that $\pi_0\left(L_{\E}^+(T)^{\gal(\E/\K)}\right)=\frac{L_{\E}^+(T)^{\gal(\E/\K)}}{L^+T}$ is finite. From Equation (\ref{eqn:ltcirc}) we have $$\pi_1(\lp{T})=\pi_1((({\elp{T})^{\gal(\E/\K)}})^{\circ})=\pi_1(({\elp{T})^{\gal(\E/\K)}}).$$ By Lemma \ref{gal} we get that 
        \begin{equation}\label{eq:pi1gamma}
            \pi_1(\lp{T})=\pi_1(\elp{T})^{\gal(\E/\K)}.
        \end{equation}
     Since $T$ is split over $\E$, the connected N\'eron $\Ocal_{\E}$-model of $T_{\E}$ is \(\text{Spec } \Ocal_{\E}[\cchg(T)]\). Hence $$\pi_1({\elp T})=\cchg(T)\otimes\pi_1(\elp\G_m).$$ By \cite[\S 2.5.10]{Serre1961} we have the canonical isomorphism of topological groups $\pi_1(\elp{\G_{m}})\xrightarrow[\cong]{\theta} {\gal_{\E}^{\ab}}.$ This gives the canonical isomorphism of topological groups: $$\pi_1(\elp{T})\xrightarrow[\cong]{\theta_{\E/\K}}\cchg(T)\otimes {\gal_{\E}^{\ab}}.$$ Now from \eqref{eq:pi1gamma} we obtain the first statement of the theorem.
   
    \par We now prove the commutativity of diagram \eqref{diag:transfer}. By \cite[\S 2.4 Prop. 7]{Serre1961} the following diagram is commutative:
    \begin{equation}\label{diag:Serre tran}
    \begin{tikzcd}
        \pi_1(\elp{\G_{m}})\arrow[r,hook]\arrow[d,"\theta_{\E/\K}"] & \pi_1(\lp_{\E'}{\G_{m}})\arrow[d,"\theta_{\E'/\K}"] & \\
      \gal_{\E}^{\ab}\arrow[r,"\tran"] & \gal_{\E'}^{\ab} & \end{tikzcd}
    \end{equation}
   We tensor the above square with $\cchg(T)$.  The map $\theta_{\E/\K}$ is a map of $\gal(\E/\K)$-modules hence we look at the $\gal(\E/\K)$-invariants of the domain and the target. We have $$\pi_1(\lp T)=(\cchg(T)\otimes\pi_1(\elp \G_m))^{\gal(\E/\K)}\mbox{ and }$$ $$\tran\left((\cchg(T)\otimes\gal_{\E}^{\ab})^{\gal(\E/\K)}\right)=({\cchg(T)\otimes\gal_{\E'}^{\ab}})^{\gal(\E'/\K)}.$$ Thus the commutativity of the previous diagram proves the commutativity of diagram \eqref{diag:transfer}.
   
    We now prove statement (ii). The map $f$ induces a map $\cchg(T_1)\to\cchg(T_2)$ of $\gal(\E/\K)$-modules, which we also denote by $f$. Since the connected N\'eron model is functorial, $f$ induces a map $\T_1\to\T_2$. This gives us a  map on the fundamental groups $\pi_1(f):\pi_1(\lp T_1)\to \pi_1(\lp T_2)$. 
    We then have the following commutative diagram:
    \begin{equation}
       \begin{tikzcd}
        \pi_1(\lp T_1)\arrow[r,"\pi_1(f)"]\arrow[d,equal] & \pi_1(\lp T_2)\arrow[d,equal]\\
        (\cchg(T_1)\otimes\pi_1(\elp \G_m))^{\gal(\E/\K)}\arrow[r,"f\otimes id"] & (\cchg(T_2)\otimes\pi_1(\elp \G_m))^{\gal(\E/\K)}.
    \end{tikzcd}  
    \end{equation}
    This proves the commutativity of diagram \eqref{diag:natural} as desired.

     We now prove statement (iii). We have $\gal(\E/\K')\leq\gal(\E/\K)$. Since $T'$ is the induced torus, we have $X^*(T')=\ind_{\gal(\E/\K')}^{\gal(\E/\K)}\Z$ as a $\gal({\E/\K})$-module. For any $\gal(\E/\K)$-module $M$ we have functorial identifications
     \begin{equation}
    \begin{split}
        M^{\gal(\E/\K')} & =\Hom_{\gal(\E/\K')}(\Z,M)\\
                         &=\Hom_{\gal{(\E/\K})}(\chg(T'),M)\cdots\mbox{by Frobenius reciprocity}\\
                         &=\Hom_{\gal(\E/\K)}(\Z,\cchg(T')\otimes M)\\
                         &=(\cchg(T')\otimes M)^{\gal(\E/\K)}.
    \end{split}
    \end{equation}
    Using the above for $M=\pi_1(\elp \G_m)$ and $M=\gal_{\E}^{\ab}$ gives us the commutativity of the right hand square  while the functoriality of $\theta$ (\cite[\S 2.4 Prop. 7]{Serre1961}) gives us the commutativity of the left square in following diagram:
    \begin{equation*}
    \begin{tikzcd}
        \pi_1(\lp_{\K'}{\G_{m}})\arrow[r,"\cong"]\arrow[d,"\theta_{\K'}"] & \pi_1(\elp \G_{m})^{\gal(\E/\K')}\arrow[d,"\theta_{\E}"]\arrow[r,"\cong"] & (\cchg(T')\otimes\pi_1(\elp \G_m))^{\gal(\E/\K)}\arrow[d,"\theta_{\E/\K}"]\\
      \gal_{\K'}^{\ab}\arrow[r,"\tran"',"\cong"] & (\gal_{\E}^{\ab})^{\gal(\E/\K')} \arrow[r,"\cong"]& (\cchg(T')\otimes\gal_{\E}^{\ab})^{\gal(\E/\K)}.
    \end{tikzcd}    
    \end{equation*}
    Statement (iii) then follows from the fact $\pi_1(\lp T')=\pi_1(\elp T')^{\gal({\E/\K})}=(\cchg(T')\otimes\pi_1(\elp \G_m))^{\gal(\E/\K)}.$
    \end{proof}
    \begin{remark}\label{rk:finitepi0}
    Since the torus $T$ splits over $\E$, we have the $\gal(\E/\K)$-equivariant short exact sequence
    \[0\to L^+_{\E}T=(L_{\E} T)^\circ\to L_{\E}T\to X_\ast(T)\to 0.\]
    Note that the norm map $T(\E)_{\gal(\E/\K)}\to T(\E)^{\gal(\E/\K)}=T(\K)$ is an isomorphism by the construction of the Kottwitz homomorphism $T(\K)\to X_*(T)_{\gal(\E/\K)}$ as described in \cite[Pf. of Prop. 11.1.1, pg. 448]{KP23}. Hence the Tate cohomology groups \(\widehat{H}^{-1}(\gal(\E/\K),T(\E))\) and \(\widehat{H}^{0}(\gal(\E/\K),T(\E))\) are trivial.
    The long exact sequence of Tate cohomology groups associated with the short exact sequence of $\gal(\E/\K)$-modules above gives us a natural identification\[\widehat{H}^0(\gal(\E/\K),T(\O_{\E}))\cong \widehat{H}^{-1}(\gal(\E/\K), \cchg(T)).\] The definition of these groups gives us \[T(\O_{\E})^{\gal(\E/\K)}/\nm(T(\O_{\E}))\cong\ker\left(\nm:\cchg(T)\to \cchg(T)^{\gal(\E/\K)}\right)\Big/\langle (1-\gamma)\chi|\gamma\in \gal(\E/\K),\chi\in \cchg(T)\rangle.\] 
    Now let \(n=|\gal(\E/\K)|\) and let \(\chi\in \cchg(T)\) with \(\nm(\chi)=\sum_{\gamma\in \gal(\E/\K)}\gamma\chi=0\). Then, \[n\chi=n\chi-\sum_{\gamma\in \gal(\E/\K)}\gamma\chi=\sum_{\gamma \in \gal(\E/\K)}(1-\gamma)\chi=0\in \widehat{H}^{-1}(\gal(\E/\K), \cchg(T)).\] In other words $\widehat{H}^{-1}(\gal(\E/\K), \cchg(T))$ is $n$-torsion as well as finitely generated, and hence finite. This implies \(T(\O_{\E})^{\gal(\E/\K)}/\nm(T(\O_{\K}))\) is finite and further implies  \(T(\O_{\E})^{\gal(\E/\K)}/(T(\O))\) is finite. From this we can deduce that in fact, $\pi_0\left(L_{\E}^+(T)^{\gal(\E/\K)}\right)=\widehat{H}^{-1}(\gal(\E/\K), \cchg(T))$ and that it is finite. Note also that $\pi_0(LT)=\pi_0\left((L_{\E}T)^{\gal(\E/\K)}\right)=X_*(T)_{\gal({\E/\K})}\supset \widehat{H}^{-1}(\gal(\E/\K), \cchg(T))$.
    \end{remark}
      
   \subsection{Proof of Theorem \ref{thm:main2} }\label{sec:pfmain2}

    In this section we will establish the inertial local Langlands correspondence for tori. For the multiplicative group $\G_m$ over $\K$, applying $\Hom_{\ell\text{-adic}}(\cdot,\mql)$ to \eqref{th} we get:
    $$\Hom_{\ell\text{-adic}}(\pi_1(\lp{\G_m}), \mql)\xto{\cong} \Hom_{\ell\text{-adic}}(\gal_{\K}^{\ab},\mql)=H^1_{\ell\text{-adic}}(\gal_{\K},{\G}_{m}(\ql)).$$
    Now consider induced tori. Let $\K'/\K$ be a finite separable extension. Consider the induced torus $T'=\res_{\K'/\K}\G_m$.  Theorem \ref{thm:fundamentalgroup}(iii) combined with the above gives us
    $$\Hom_{\ell\text{-adic}}(\pi_1(\lp{T'}),\mql)=\Hom_{\ell\text{-adic}}(\pi_1(\lp_{\K'}\G_m),\mql)\xrightarrow[\cong]{\theta_{\K'}}\Hom_{\ell\text{-adic}}(\gal_{\K'}^{\ab},\mql).$$
    On the other hand $\chg(T')=\ind_{\gal_{\K'}}^{\gal_{\K}}\chg(\G_m)=\Z[{\gal_{\K}}]\otimes_{\Z[\gal{\K'}]}\Z$ is the induced module as a $\gal_{\K}$-module. It is canonically isomorphic to the co-induced module $\Hom_{\gal_{\K'}}(\Z[{\gal_{\K}}],\chg(\G_m))$ as the index $[\gal_{\K}:\gal_{\K'}]$ is finite. Thus using Shapiro's lemma \cite[pg 117]{Serre1967} we have an isomorphism: 
    \begin{equation*}\label{eqn:Sh}
        H^1_{\ell\text{-adic}}(\gal_{\K},\widecheck{T'}(\ql))=H^1_{\ell\text{-adic}}(\gal_{\K},X^*(T')\otimes\mql)\xrightarrow[\sh]{\cong} H^1_{\ell\text{-adic}}(\gal_{\K'},\mql)=\Hom_{\ell\text{-adic}}(\gal^{\ab}_{\K'},\mql).
    \end{equation*}Combining with the previous equation we get the desired isomorphism:
      \begin{equation}\label{eqn:induced}
     \Hom_{\ell\text{-adic}}(\pi_1(\lp{T'}),\mql)\xto\cong H^1_{\ell\text{-adic}}(\gal_{\K},\widecheck{T'}(\ql)).   
      \end{equation}
    We now state and prove a more precise version of Theorem \ref{thm:main2} stated in the introduction. 
    Let $\mathfrak{T}$ denote the (additive) category of tori over $\K$.
    Consider the following two additive functors $\mathfrak{T}\to \mathcal{A}b^{\opp}$ given by
       \begin{equation}\label{eqn:functors}
        \begin{split}
        T\mapsto & \Hom_{\ell\text{-adic}}(\pi_1(\lp T),\mql)=\CS^+(T),\\
        T\mapsto &  H^1_{\ell\text{-adic}}(\gal_{\K},\dlt)=\{\mbox{inertial local Langlands parameters for }T\}.
      \end{split}
      \end{equation}
      \begin{theorem}\label{thm:inertial LLC}
        There is a unique additive natural isomorphism between the functors given in \eqref{eqn:functors}, i.e. given a torus $T$ over $\K$, we have a unique functorial isomorphism 
        $$\varphi_{T}:\CS^+(T)=\Hom_{\ell\text{-adic}}(\pi_1(\lp{T}),\mql)\xrightarrow{\cong} H^1_{\ell\text{-adic}}(\gal_{\K},\dlt)$$
        such that if $\E$ is a finite Galois extension of $\K$ and $T=\res_{\E/\K}\G_m$, then $\varphi_{T}$ is as obtained in \eqref{eqn:induced}. In fact for any finite separable extension $\K'$ of $\K$ and $T'=\res_{\K'/\K}\G_m$, the natural isomorphism $\varphi_{T'}$ is as given by \eqref{eqn:induced}.
    \end{theorem}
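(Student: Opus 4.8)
The plan is to realise both sides as right-exact additive functors on the category of $\Z[\Gamma]$-lattices, via $T\mapsto\chg(T)$, and then to use the formal principle that a natural transformation between right-exact functors is uniquely determined by, and can be reconstructed from, its values on free objects together with a choice of presentation. The torus-theoretic input is the elementary fact that every torus $T$ over $\K$, split over $\E/\K$ with $\Gamma=\gal(\E/\K)$, admits a two-term \emph{co-resolution by induced tori}: an exact sequence $1\to T\to Q^0\xrightarrow{\rho}Q^1$ (exact at $T$ and at $Q^0$, with $\rho$ not necessarily surjective) in which $Q^0,Q^1$ are finite products of tori $\res_{\E/\K}\G_m$. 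Indeed this is just the torus incarnation of a length-two free resolution $\chg(Q^1)\to\chg(Q^0)\to\chg(T)\to 0$ of the $\Z[\Gamma]$-lattice $\chg(T)$ — here $\chg(\res_{\E/\K}\G_m)=\Z[\Gamma]$, and the cokernel of a map of free lattices is again a lattice, so $\ker\rho$ is a torus.

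Next I would verify that both functors $T\mapsto\CS^+(T)=\Hom_{\ql}(\pi_1(L^+T),\mql)$ and $T\mapsto H^1_{\ql}(\gal_\K,\dlt)$ send such a co-resolution to a right-exact sequence
\[
\CS^+(Q^1)\to\CS^+(Q^0)\to\CS^+(T)\to 0,\qquad H^1_{\ql}(\gal_\K,\widecheck{Q^1}(\ql))\to H^1_{\ql}(\gal_\K,\widecheck{Q^0}(\ql))\to H^1_{\ql}(\gal_\K,\dlt)\to 0.
\]
For the first, Theorem \ref{thm:fundamentalgroup} replaces $\pi_1(L^+(-))$ by $(\cchg(-)\otimes\gal_\E^\ab)^\Gamma$; since $\cchg(T)\hookrightarrow\cchg(Q^0)$ with torsion-free cokernel, tensoring with $\gal_\E^\ab$ and taking $\Gamma$-invariants preserves left-exactness, giving $0\to\pi_1(L^+T)\to\pi_1(L^+Q^0)\to\pi_1(L^+Q^1)$; then the $\ell$-adic duality results of Appendix \ref{sec:ladicduality} (exactness of $\Hom_{\ql}(-,\mql)$ together with the property that $\ql$-characters extend along closed subgroup inclusions of the relevant profinite groups) yield the displayed right-exact sequence. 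For the second, I would tensor $\chg(Q^1)\to\chg(Q^0)\to\chg(T)\to 0$ with $\mql$ and feed the two resulting short exact sequences of $\gal_\K$-modules into the long exact sequence of continuous cochain cohomology; the essential point is that, $\K$ being strictly Henselian, $\gal_\K$ has cohomological dimension $\le 1$ (in the case of principal interest its wild inertia is free pro-$p$ and its tame quotient is $\prod_{\ell'\neq p}\Z_{\ell'}$), so $H^2_{\ql}(\gal_\K,-)$ vanishes on these modules and the connecting maps are zero.

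Granting this, uniqueness is immediate: since $\CS^+(Q^0)\to\CS^+(T)$ is surjective, any natural transformation is determined on $T$ by its value on the induced torus $Q^0$, which in turn — by additivity of both functors — is pinned down by the prescribed values \eqref{eqn:induced} on the tori $\res_{\E/\K}\G_m$. For existence I would first check, using the functoriality and transfer-compatibility of Serre's reciprocity isomorphism recorded in Theorem \ref{thm:fundamentalgroup}(iii) and the naturality of Shapiro's lemma, that \eqref{eqn:induced} assembles into a natural isomorphism $\varphi^{\mathrm{qt}}$ between the restrictions of the two functors to the subcategory of induced tori; then, for general $T$ with a co-resolution as above, define $\varphi_T$ to be the isomorphism $\operatorname{coker}\!\big(\CS^+(Q^1)\to\CS^+(Q^0)\big)\xrightarrow{\ \cong\ }\operatorname{coker}\!\big(H^1_{\ql}(\gal_\K,\widecheck{Q^1}(\ql))\to H^1_{\ql}(\gal_\K,\widecheck{Q^0}(\ql))\big)$ induced by $\varphi^{\mathrm{qt}}_{Q^0}$ and $\varphi^{\mathrm{qt}}_{Q^1}$. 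Independence of the chosen co-resolution, naturality and additivity then follow from the homotopy-uniqueness of free $\Z[\Gamma]$-lattice resolutions (it matters that we resolve by \emph{free} lattices, i.e. products of $\res_{\E/\K}\G_m$, rather than by arbitrary quasi-trivial tori, since permutation lattices are not projective). Taking the trivial co-resolution of $T=\res_{\E/\K}\G_m$ recovers \eqref{eqn:induced} in that case, and the assertion for $T'=\res_{\K'/\K}\G_m$ follows by comparing a free co-resolution of $T'$ with \eqref{eqn:induced} via the transfer diagrams of Theorem \ref{thm:fundamentalgroup}.

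The step I expect to be the main obstacle is the right-exactness of the character-sheaf functor: this is precisely where the $\ell$-adic Pontryagin duality of Appendix \ref{sec:ladicduality} must be invoked — ordinary Pontryagin duality does not suffice, as $\pi_1(L^+T)$ carries infinite pro-$p$ parts and we are taking $\mql$-valued characters — and one must confirm that all the profinite groups $\pi_1(L^+Q^i)$ appearing, as well as their relevant subquotients, lie in the class for which that duality is established. A secondary and more routine obstacle is the bookkeeping in the last step, i.e. checking that the constructed $\varphi_{\res_{\K'/\K}\G_m}$ agrees with \eqref{eqn:induced}; this reduces to the compatibility of Serre's $\theta$ and of Shapiro's isomorphism with transfer and norm maps, for which Theorem \ref{thm:fundamentalgroup} provides the relevant commutative diagrams.
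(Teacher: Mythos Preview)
Your approach is correct in outline and genuinely different from the paper's. The paper establishes existence of $\varphi_T$ \emph{directly}: it combines Theorem \ref{thm:fundamentalgroup} with a corestriction isomorphism (Proposition \ref{prop:cor}) whose proof rests on Tate--Nakayama duality and cup product with Serre's canonical class $u_{\E/\K}\in H^2(\Gamma,\gal_\E^{\ab})$, in the style of Milne/Labesse; uniqueness is then deferred to Yu's argument. By contrast you run Yu's template throughout: resolve $T$ by products of $\res_{\E/\K}\G_m$, verify that both functors are right-exact on such a co-resolution, and read off $\varphi_T$ from cokernels. Your method yields uniqueness essentially for free and avoids the canonical class entirely, at the price of needing a cohomological-dimension input; the paper's method is more explicit (one gets an actual cocycle-level formula via $\cor$) but the Tate--Nakayama diagram chase is heavier.

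Two technical points deserve more care. First, your right-exactness on the $H^1$ side hinges on $H^2_{\ql}(\gal_\K,-)=0$; the inequality $\operatorname{cd}_\ell(\gal_\K)\le 1$ is a statement about discrete $\ell$-torsion coefficients, while $\dlt$ carries the $\ell$-adic topology. You should observe that the short exact sequences $0\to I\otimes\mql\to\widecheck{Q^0}(\ql)\to\dlt\to 0$ are sequences of finite-dimensional $\mql$-vector spaces, hence topologically split, so the long exact sequence in continuous cohomology is available; the vanishing of the relevant $H^2$ then follows by filtering by $\Ocal_F$-lattices and invoking $\operatorname{cd}_\ell\le 1$ on the graded pieces (or by noting that the action factors through the finite quotient $\Gamma$ and $\dlt$ is uniquely divisible, which kills $H^{\ge 1}(\Gamma,-)$ in the Hochschild--Serre sequence and reduces you to the split case over $\E$). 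Second, your claim that $0\to\pi_1(L^+T)\to\pi_1(L^+Q^0)\to\pi_1(L^+Q^1)$ is exact needs the observation that in the dual resolution $0\to\cchg(T)\to\cchg(Q^0)\to\cchg(Q^1)$ the cokernel $\cchg(Q^1)/\operatorname{im}(\cchg(Q^0))$ is again torsion-free (it is $\Hom(\ker(\chg(Q^1)\to\chg(Q^0)),\Z)$), so that $\operatorname{Tor}_1^{\Z}$ with $\gal_\E^{\ab}$ vanishes and tensoring preserves left-exactness before taking $\Gamma$-invariants. With these points addressed, your argument goes through.
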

    
    The canonical isomorphism $\varphi_T$ is the inertial local Langlands correspondence for $\ql$-character sheaves on $\lp T$. This statement is similar to the statement of usual local Langlands correspondence for tori as given in \cite{JKYu[2004]}. We will give a proof of the above theorem along similar lines.
    We will first state and prove an auxiliary proposition before proving the theorem.
   \par Let $\E/\K$ be a finite Galois extension where $T$ splits and $\Gamma=\gal(\E/\K)$. From \eqref{galois} $\gal_{\E}^{\ab}$ is a finite index normal subgroup of $\mathscr{G}_{\E/\K}$. We choose $\{g_\gamma ,\gamma\in\Gamma\}$, a set of  right $\gal_{\E}^{\ab}$-coset representatives in $\mathscr{G}_{\E/\K}$. These are also the same as the left cosets. We have the corestriction map:
      \begin{equation}\label{eq:cormap}
\cor:\Hom(\gal_{\E}^{\ab},\dlt)\to H^1(\mathscr{G}_{\E/\K}, \dlt).
      \end{equation}
    It can be explicitly described as follows: For $\alpha\in \Hom(\gal_{\E}^{\ab},\dlt) $ and $g\in\mathscr{G}_{\E/\K}$
    \begin{equation}\label{eq:corestriction}
     \cor(\alpha)(g)=\sum_{\gamma\in\Gamma} g_{\gamma}\alpha(g_{\gamma}^{-1} g g_{\gamma'})  
     \end{equation} where $g_{\gamma'}$ is the representative such that $g_{\gamma}^{-1}g g_{\gamma'}\in \gal_{\E}^{\ab} $ (See \cite[\S 5]{Neu2008}). It can be shown that this map is independent of the choice of the coset representatives.

    \begin{proposition}\label{prop:cor}
	The corestriction map  \eqref{eq:cormap} gives the following isomorphisms of abelian groups: 
	 \begin{equation}
\Hom((\cchg(T)\otimes\gal_{\E}^{\ab})^{\Gamma},\mql)={\Hom(\gal_{\E}^{\ab},\dlt})_\Gamma \xrightarrow[\cong]{\cor} H^1(\mathscr{G}_{\E/\K}, \dlt),
	 \end{equation} 
	
       \begin{equation}
	 \Hom_{\ell\text{-adic}}((\cchg(T)\otimes\gal_{\E}^{\ab})^{\Gamma},\mql)={\Hom_{\ell\text{-adic}}(\gal_{\E}^{\ab},\dlt})_\Gamma \xrightarrow[\cong]{\cor} H^1_{\ell\text{-adic}}(\mathscr{G}_{\E/\K}, \dlt).
	 \end{equation} 
\end{proposition}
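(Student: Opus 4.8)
The plan is to reduce the two displayed equalities ("$=$") to a formal manipulation with $\Hom_{\Z}(-,\mql)$, and then to prove that corestriction is an isomorphism by feeding the defining properties of the canonical class recorded in \eqref{eq:canonicalclass} into the Lyndon--Hochschild--Serre machinery of the extension $1\to\gal_{\E}^{\ab}\to\mathscr{G}_{\E/\K}\to\Gamma\to1$, i.e.\ via Tate's theorem. For the equalities: since $\cchg(T)$ is a finite free $\Z$-module, $\dlt=\chg(T)\otimes\mql=\Hom_{\Z}(\cchg(T),\mql)$, so by adjunction $\Hom_{\Z}(\gal_{\E}^{\ab},\dlt)=\Hom_{\Z}(\gal_{\E}^{\ab}\otimes\cchg(T),\mql)$. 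As $\mql$ is divisible it is injective as a $\Z$-module, so $\Hom_{\Z}(-,\mql)$ is exact and, for the finite group $\Gamma$, interchanges invariants with coinvariants; this gives $\Hom_{\Z}(\gal_{\E}^{\ab},\dlt)_{\Gamma}=\Hom_{\Z}((\gal_{\E}^{\ab}\otimes\cchg(T))^{\Gamma},\mql)$, and the analogous statement for the $\ql$-continuous variant follows in the same way using the duality for profinite abelian groups proved in Appendix \ref{sec:ladicduality}. Since $\gal_{\E}^{\ab}$ acts trivially on $\dlt$ we also have $H^1(\gal_{\E}^{\ab},\dlt)=\Hom(\gal_{\E}^{\ab},\dlt)$, so what remains is to show that $\cor\colon H^1(\gal_{\E}^{\ab},\dlt)_{\Gamma}\to H^1(\mathscr{G}_{\E/\K},\dlt)$ is an isomorphism, and similarly with $\ql$-coefficients.

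Because $\mql$ is $\Z$-injective with trivial $\mathscr{G}_{\E/\K}$-action, there is a universal-coefficient identification $H^i(\mathscr{G}_{\E/\K},\dlt)\cong\Hom_{\Z}(H_i(\mathscr{G}_{\E/\K},\cchg(T)),\mql)$ under which cohomological corestriction becomes $\Hom_{\Z}(-,\mql)$ applied to the homological transfer $\operatorname{tr}\colon H_i(\mathscr{G}_{\E/\K},\cchg(T))\to H_i(\gal_{\E}^{\ab},\cchg(T))$. For $i=1$, $\gal_{\E}^{\ab}$ acts trivially on $\cchg(T)$, so $H_1(\gal_{\E}^{\ab},\cchg(T))=\gal_{\E}^{\ab}\otimes\cchg(T)$ and the transfer lands in the $\Gamma$-invariants; thus the proposition reduces to showing that
\[
\operatorname{tr}\colon H_1(\mathscr{G}_{\E/\K},\cchg(T))\ \longrightarrow\ (\gal_{\E}^{\ab}\otimes\cchg(T))^{\Gamma}=\pi_1(\lp T)
\]
is an isomorphism. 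This I would get from the homological five-term exact sequence
\[
H_2(\Gamma,\cchg(T))\xrightarrow{\ d\ }(\gal_{\E}^{\ab}\otimes\cchg(T))_{\Gamma}\longrightarrow H_1(\mathscr{G}_{\E/\K},\cchg(T))\longrightarrow H_1(\Gamma,\cchg(T))\longrightarrow0,
\]
whose edge differential $d$ is cap product with the extension class, which by Remark \ref{rk:canonicalclass} is the canonical class $u_{\E/\K}\in H^2(\Gamma,\gal_{\E}^{\ab})$. Now \eqref{eq:canonicalclass} says precisely that $u_{\E/\K}$ satisfies the hypotheses of Tate's theorem for $\Gamma$ acting on $\gal_{\E}^{\ab}=\pi_1(\elp\G_m)$, so cup/cap product with $u_{\E/\K}$ induces isomorphisms $\widehat{H}^{r}(\Gamma,\cchg(T))\xrightarrow{\cong}\widehat{H}^{r+2}(\Gamma,\gal_{\E}^{\ab}\otimes\cchg(T))$ for all $r$ (legitimate since $\cchg(T)$ is $\Z$-free). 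Taking $r=-3$ identifies $\operatorname{im}(d)$ with $\widehat{H}^{-1}(\Gamma,\gal_{\E}^{\ab}\otimes\cchg(T))=\ker(\overline{\nm})$ and $r=-2$ identifies $H_1(\Gamma,\cchg(T))$ with $\widehat{H}^0(\Gamma,\gal_{\E}^{\ab}\otimes\cchg(T))=\operatorname{coker}(\overline{\nm})$, where $\overline{\nm}\colon(\gal_{\E}^{\ab}\otimes\cchg(T))_{\Gamma}\to(\gal_{\E}^{\ab}\otimes\cchg(T))^{\Gamma}$ is the norm. The five-term sequence then exhibits $H_1(\mathscr{G}_{\E/\K},\cchg(T))$ as an extension of $\operatorname{coker}(\overline{\nm})$ by $\operatorname{im}(\overline{\nm})$ with the same pieces as $(\gal_{\E}^{\ab}\otimes\cchg(T))^{\Gamma}$; comparing via the transfer (using $\operatorname{tr}\circ\iota_{*}=\overline{\nm}$ and the five lemma) shows $\operatorname{tr}$ is the desired isomorphism, and dualizing gives both isomorphisms of the proposition.

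The step I expect to be the main obstacle is the bookkeeping, in two places. First, one must check that the spectral-sequence edge map is genuinely the cap product with $u_{\E/\K}$ and then apply Tate's theorem in its homological/cap-product incarnation with correct degree conventions. Second, and more seriously, one must match the resulting abstract isomorphism with the \emph{explicit} corestriction cocycle formula \eqref{eq:corestriction}: this is the classical compatibility of corestriction in group cohomology with the transfer (Verlagerung) homomorphism, which for $T=\G_m$ is exactly Serre's transfer isomorphism $\gal_{\K}^{\ab}\xrightarrow{\cong}(\gal_{\E}^{\ab})^{\Gamma}$ recalled in \S\ref{sec:pfmain1}. The cleanest way to pin this down in general is probably to verify the proposition directly for the induced tori $T=\res_{\E^{H}/\K}\G_m$, where Shapiro's lemma for the extension $1\to\gal_{\E}^{\ab}\to\mathscr{G}_{\E/\K}\to\Gamma\to1$ reduces everything to Serre's case, and then to transport to general $T$ using that both sides of the proposition are additive functors of $T$ agreeing on induced tori. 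Throughout, one must keep track of the $\ell$-adic topologies so that the isomorphism restricts correctly to the continuous subgroups on both sides; this is where the results of Appendix \ref{sec:ladicduality} are used.
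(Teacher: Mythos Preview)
Your overall strategy is correct and is essentially the homological dual of the paper's cohomological argument. Both proofs rest on the same two pillars: the Tate--Nakayama isomorphisms coming from the canonical class (your ``Tate's theorem'', the paper's Lemma~\ref{lemma:tH iso}), and a five-term exact sequence for the extension $1\to\gal_{\E}^{\ab}\to\mathscr{G}_{\E/\K}\to\Gamma\to1$. You use the homological five-term sequence and the transfer, then dualise via $\Hom(-,\mql)$; the paper works directly in cohomology with the inflation--restriction sequence and the explicit corestriction cocycle \eqref{eq:corestriction}. The identification $\operatorname{tr}\circ\iota_{*}=\overline{\nm}$ you invoke is precisely the dual of the paper's $\res\circ\cor=\nm$ (its Step~5), and your claim that the edge differential is cap product with $u_{\E/\K}$ corresponds to the paper's Steps~3--4, where it verifies by explicit cocycle computation that the outer squares of its comparison diagram commute (up to sign) with cup product by $u_{\E/\K}$.

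The difference in practice is that the paper does not defer this compatibility to a black box: most of its length is spent on exactly the ``bookkeeping'' you flag as the main obstacle. In particular, matching the spectral-sequence edge map with cap/cup by the canonical class, and then with the concrete corestriction formula, is not automatic; the paper handles it by a direct dimension-shifting computation following Milne. Your alternative route via induced tori and Shapiro's lemma is plausible but is not what the paper does, and would itself require checking that the resulting natural transformation agrees with \eqref{eq:corestriction}. Finally, for the $\ell$-adic statement the paper gives a specific argument (its Step~6): $\cor(f)$ is continuous iff $\nm(f)=\res\circ\cor(f)$ is, iff $f|_{(\gal_{\E}^{\ab}\otimes\cchg(T))^{\Gamma}}$ is, using that the norm image has finite index in the invariants. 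Your sketch should incorporate this, since Appendix~\ref{sec:ladicduality} alone gives exactness of $\Hom_{\ql}(-,\mql)$ on the relevant category but does not by itself identify which cohomology classes in $H^{1}(\mathscr{G}_{\E/\K},\dlt)$ are represented by continuous cocycles.
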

  We prove the following lemma before proving the proposition:
      \begin{lemma}\label{lemma:tH iso}
     	For each $r\in \Z$, we have an isomorphism: 
     	\begin{equation}
     	\tH^{r}(\Gamma, \Hom(\gal_{\E}^{\ab},\dlt))\to \tH^{r+2}(\Gamma,\dlt).   
     	\end{equation} 
     \end{lemma}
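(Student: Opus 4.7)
The plan is to realize the asserted isomorphism as cup product with the canonical class $u_{\E/\K}\in \tH^2(\Gamma,\gal_\E^{\ab})$ from \S\ref{sec:canonclass}, and then to upgrade this to an isomorphism by appealing to the Tate--Nakayama theorem. Concretely, the $\Gamma$-equivariant evaluation pairing
\[
\gal_\E^{\ab}\otimes_\Z \Hom(\gal_\E^{\ab},\dlt)\to \dlt,\qquad \alpha\otimes f\mapsto f(\alpha),
\]
induces, via cup product with $u_{\E/\K}$, a natural shift-by-two map on Tate cohomology which will serve as the asserted isomorphism $\tH^{-r}(\Gamma,\Hom(\gal_\E^{\ab},\dlt))\to \tH^{r+2}(\Gamma,\dlt)$. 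Setting up naturality and compatibility with the Tate-cohomology index conventions of \cite{Serre1967} is essentially bookkeeping once the cup product is in place.

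To show that this candidate map is an isomorphism, I would invoke the Tate--Nakayama theorem. By \eqref{eq:canonicalclass}, the canonical class $u_{\E/\K}$ satisfies the Tate conditions for the $\Gamma$-module $\gal_\E^{\ab}$: for every subgroup $H\leq \Gamma$, one has $\tH^1(H,\gal_\E^{\ab})=0$ and $\tH^2(H,\gal_\E^{\ab})=\Z/|H|\Z$ is generated by $\res u_{\E/\K}$. Tate's theorem then yields the dimension-shift isomorphism $\tH^r(H,\Z)\xrightarrow{\cong}\tH^{r+2}(H,\gal_\E^{\ab})$ for every $r$ and every $H$; the $\Hom$-variant of this theorem (obtained by replacing the coefficient $\Z$ by a general $\Gamma$-module $M$ and the cup product $\cup u$ by $\cup u$ followed by the evaluation $\gal_\E^{\ab}\otimes \Hom(\gal_\E^{\ab},M)\to M$) delivers precisely the shift-by-two isomorphism claimed, provided the target $M$ is suitably mild. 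In our situation, since $\dlt=\chg(T)\otimes \mql$ and $\mql$ is divisible as an abelian group, the $\Gamma$-module $\dlt$ is divisible, hence injective as an abelian group, which is more than enough.

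Alternatively, one could argue hands-on along the lines of Serre's proof of \eqref{th} recalled in \S\ref{sec:canonclass}: apply the exact functor $\Hom(-,\dlt)$ (exact because $\dlt$ is injective as an abelian group) to the two short exact sequences
\[
0\to \pi_1(\elp \G_m)\to \overline{\elp \G_m}(k)\to \elp \G_m(k)\to 0 \quad \text{and} \quad 0\to \elp \G_m(k)\to \E^\times\to \Z\to 0,
\]
together with the cohomological triviality of $\overline{\elp \G_m}(k)$ and $\E^\times$. Provided the resulting $\Hom$-modules $\Hom(\overline{\elp \G_m}(k),\dlt)$ and $\Hom(\E^\times,\dlt)$ remain cohomologically trivial, the two long exact sequences in Tate cohomology compose two single-dimension shifts into the claimed shift by two, with $\Hom(\Z,\dlt)=\dlt$ providing the right-hand side. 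The main obstacle along this direct route is exactly this preservation of cohomological triviality under $\Hom(-,\dlt)$: it is automatic when the first argument is $\Z[\Gamma]$-projective (equivalently $\Z$-free and $\Gamma$-cohomologically trivial, by Rim--Nakayama), but modules such as $\overline{\elp \G_m}(k)$ need not be $\Z$-free a priori. I would circumvent this either by leveraging the injectivity of $\dlt$ in a dimension-shifting argument to reduce to the projective case, or, more cleanly, by adopting the cup-product approach above, which relies only on the Tate conditions for $\gal_\E^{\ab}$ and the naturality of cup products in Tate cohomology.
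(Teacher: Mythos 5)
Your proposal correctly identifies the candidate isomorphism as cup product with the canonical class $u_{\E/\K}$ via the evaluation pairing $\gal_\E^{\ab}\otimes\Hom(\gal_\E^{\ab},\dlt)\to\dlt$; this is exactly the map the paper uses. The gap is in the verification that it is an isomorphism. You appeal to a ``$\Hom$-variant'' of the Tate--Nakayama theorem and assert, without proof or reference, that the $\Z$-injectivity of $\dlt$ is ``more than enough.'' No such variant appears in the sources you cite (Serre, Ch.\ IX \S 8 states the classical tensor form), and while the statement is in fact true, proving it requires real work: one would construct a cohomologically trivial module $X$ and a $\Z[\Gamma]$-free module $Y$ sitting in a 2-extension $0\to\gal_\E^{\ab}\to X\to Y\to\Z\to 0$ representing $u_{\E/\K}$, then use the $\Z$-injectivity of $\dlt$ to make $\Hom(-,\dlt)$ exact, check that $\Hom(X,\dlt)$ and $\Hom(Y,\dlt)$ remain cohomologically trivial (via Rim--Nakayama and the identification $\Hom_\Z(\Z[\Gamma],\dlt)$ with a coinduced, hence cohomologically trivial, module), and finally compose two one-step dimension shifts. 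Asserting the conclusion without this argument leaves a genuine hole; you correctly flag the analogous issue in your ``alternative route,'' but the same preservation-of-cohomological-triviality question is what needs resolving in the main route too.

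The paper avoids this entirely by taking a different path. Using $\dlt=\Hom(\cchg(T),\mql)$, it forms a commutative square of two cup-product pairings into $\tH^{-1}(\Gamma,\mql)$, with the right vertical arrow the identity and the inner vertical arrow the classical Tate--Nakayama dimension shift $u_{\E/\K}\cup-:\tH^{-r-3}(\Gamma,\cchg(T))\to\tH^{-r-1}(\Gamma,\gal_\E^{\ab}\otimes\cchg(T))$, which is licensed directly because $\cchg(T)$ is a $\Z$-free lattice. Non-degeneracy of both horizontal pairings (Tate--Nakayama duality) and commutativity (associativity of cup products) then force the outer vertical arrow $-\cup u_{\E/\K}$ to be an isomorphism. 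This uses only the standard tensor form of the theorem and sidesteps the need to establish any $\Hom$-coefficient variant. So your map is right, but your justification that it is an isomorphism is incomplete; to fill the gap you should either carry out the dimension-shifting argument sketched above or switch to the paper's duality-pairing argument.
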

     \begin{proof}
     	We have the following pairing:
     	\begin{equation*}
     	\begin{split}
     	\Hom(\cchg(T)\otimes\gal_{\E}^{\ab},\mql) \times \gal_{\E}^{\ab}\to & \Hom(\cchg(T),\mql)=\dlt \\
            (f,c) \mapsto & f(c\otimes\_) :\cchg(T)\to\mql.
     	\end{split}
     	\end{equation*}
           We also have the natural pairing
           \begin{equation}
            \cchg(T)\times\gal_{\E}^{\ab}\to  \cchg(T)\otimes\gal_{\E}^{\ab}.   
           \end{equation}
            These are bilinear pairings of $\Gamma$-modules. These pairings and the canonical class $u_{\E/\K}\in H^2(\Gamma, \gal_{\E}^{\ab})$ gives us the following maps:
     	\begin{equation}
     	\begin{split}
     	y\mapsto y\cup u_{\E/\K}&:\tH^r(\Gamma,\Hom(\cchg(T)\otimes\gal_{\E}^{\ab},\mql))\to \tH^{r+2}(\Gamma,\dlt)\\
     	y\mapsto y\cup u_{\E/\K}&:\tH^r(\Gamma, \cchg(T))\to\tH^{r+2}(\Gamma,\cchg(T)\otimes\gal_{\E}^{\ab}).
     	\end{split}
     	\end{equation} 
     	Consider the following diagram:
     	\begin{equation*}
     	\begin{tikzcd}[row sep=large, column sep=tiny]
     	\tH^r(\Gamma,\Hom(\cchg(T)\otimes\gal_{\E}^{\ab},\mql))\arrow[d,"\_ \cup u_{\E/\K}"] &\times & \tH^{-r-1}(\Gamma,\cchg(T)\otimes\gal_{\E}^{\ab})\arrow{rrrr} & & & & \tH^{-1}(\Gamma,\mql)\arrow[d,equal]  \\
     	\tH^{r+2}(\Gamma,\dlt) &\times & \tH^{-r-3}(\Gamma, \cchg(T))\arrow[u,"u_{\E/\K} \cup\_"]\arrow{rrrr} & & & & \tH^{-1}(\Gamma, \mql).
     	\end{tikzcd}   
     	\end{equation*}
     	From \eqref{eq:canonicalclass}, we see that all the assumptions for Tate-Nakayama theorem \cite[Ch IX,\S8]{Serre1967} are satisfied. Hence, using the Tate-Nakayama theorem, the second vertical map is an isomorphism. The pairings are non-degenerate by \cite[Lemma 8.2]{Mil2006}. The diagram commutes by the associativity of cup products, hence the leftmost map is an isomorphism.  This proves the result.
     \end{proof}
    
     \begin{proof}[Proof of Proposition \ref{prop:cor}]
     We will complete this proof in several steps; it is similar to the proof of \cite[8.6]{Mil2006}.\\
     Step 1 shows that \(\Hom((\cchg(T)\otimes\gal_{\E}^{\ab})^{\Gamma},\mql)={\Hom(\gal_{\E}^{\ab},\dlt})_\Gamma\) and that the corestriction map factors through \({\Hom(\gal_{\E}^{\ab},\dlt})_\Gamma\).\\
     In step 2, we look at diagram \eqref{diag:cor} and prove that the corestriction map being an isomorphism is equivalent to the commutativity or anti-commutativity of the three smaller squares in the diagram. This is where we use Lemma \ref{lemma:tH iso}.\\
     Steps 3, 4, and 5 prove the commutativity/anti-commutativity of the individual squares of the diagram in step 2. This completes the proof of the fact that \({\Hom(\gal_{\E}^{\ab},\dlt})_\Gamma\xrightarrow{\cor} H^1(\mathscr{G}_{\E/\K}, \dlt)\) is an isomorphism.\\
     Step 6 proves that \(\cor\) then gives an isomorphism \({\Hom_{\ell\text{-adic}}(\gal_{\E}^{\ab},\dlt})_\Gamma\xrightarrow[\cong]{\cor} H^{1}_{\ell\text{-adic}}(\mathscr{G}_{\E/\K}, \dlt)\). We now begin the proof.

    \noindent
    \textbf{Step 1:}  Observe that since $\mql$ is a divisible abelian group,
      \begin{equation}
        \begin{split}
          \Hom((\cchg(T)\otimes\gal_{\E}^{\ab})^{\Gamma},\mql) & =  {\Hom(\cchg(T)\otimes\gal_{\E}^{\ab},\mql)}_{\Gamma}\\
          & =  \Hom(\gal_{\E}^{\ab},\chg(T)\otimes\mql)_{\Gamma}\\
          & =  {\Hom(\gal_{\E}^{\ab},\dlt})_\Gamma.
        \end{split}
    \end{equation}
       The fundamental group $\pi_1(\lp T)\subset\cchg(T)\otimes\pi_1(\elp\G_m)$ is a pro-abelian group. We know that $\pi_1(\elp\G_m)\cong \gal^{\ab}_{\E}$ has a Noetherian $\ell$-primary part (\cite[\S 4]{Serre1961}) and hence $\pi_1(\lp T)\cong(\cchg(T)\otimes\gal_{\E}^{\ab})^{\Gamma}$ has a Noetherian $\ell$-primary part. Then using Proposition \ref{cexact} we obtain the following equality in the same way as above: $$\Hom_{\ell\text{-adic}}((\cchg(T)\otimes\gal_{\E}^{\ab})^{\Gamma},\mql)={\Hom_{\ell\text{-adic}}(\gal_{\E}^{\ab},\dlt})_{\Gamma}.$$
    We now show that the map $\cor:\Hom(\gal_{\E}^{\ab},\dlt)\to H^1(\mathscr{G}_{\E/\K}, \dlt)$ factors through ${\Hom(\gal_{\E}^{\ab},\dlt)}_\Gamma $.
    
      Let $g\in \mathscr{G}_{\E/\K} $ and $\gamma_{\circ} \in \Gamma$. The action of $\Gamma$ on $\Hom(\gal_{\E}^{\ab},\dlt)$ is given by $(\gamma_{\circ}\alpha)(g)=g_{\gamma_{\circ}}\alpha(g_{\gamma_{\circ}}^{-1}g g_{\gamma_{\circ}})$.
    We have
      \begin{equation*}
       \cor({\gamma_{\circ}}\alpha)(g)=\sum_{\gamma\in\Gamma}g_\gamma g_{\gamma_{\circ}}\alpha(g_{\gamma_{\circ}}^{-1}g_\gamma^{-1}g g_{\gamma'}g_{\gamma_{\circ}})
      \end{equation*}
      with $(g_{\gamma'}g_{\gamma_{\circ}})^{-1}g g_{\gamma'}g_{\gamma_{\circ}}\in \gal_{\E}^{\ab}$. Since $\{g_\gamma g_{\gamma_{\circ}}|\gamma\in\Gamma\}$  form a system of coset representatives for $\Gamma$, $$\cor({\gamma_{\circ}}\alpha)=\cor(\alpha).$$ 
      This shows that $\cor$ factors through ${\Hom(\gal_{\E}^{\ab},\dlt)}_\Gamma $.
     
    \noindent
    \textbf{Step 2:} To prove that the corestriction map is an isomorphism, it is enough to show that each square in the following diagram commutes/anti-commutes: 
	\begin{small}
	\begin{equation}\label{diag:cor}
      \begin{tikzcd}[column sep=tiny]
        0\arrow [r] & {\tH^{-1}(\Gamma, \Hom(\gal_{\E}^{\ab},\dlt))} \arrow[r] \arrow[d, "\cong","\_\cup u_{\E/\K}
     	"'] & \Hom(\gal_{\E}^{\ab},\dlt)_\Gamma \arrow[r] \arrow[d, "\cor"] & \Hom(\gal_{\E}^{\ab},\dlt)^\Gamma \arrow[r] \arrow[equal]{d} & {\tH^{0}(\Gamma, \Hom(\gal_{\E}^{\ab},\dlt))} \arrow[d, "\cong","\_\cup u_{\E/\K}"'] \\ 
	     0 \arrow[r] & H^1(\Gamma, \dlt) \arrow[r,"\infl"] & H^1(\mathscr{G}_{\E/\K}, \dlt) \arrow[r,"\res"] & {H^1(\gal_{\E}^{\ab}, \dlt)}^\Gamma \arrow[r] & H^2(\Gamma, \dlt).
       \end{tikzcd}
       \end{equation}
	 \end{small}
     The top row of the diagram above is the definition of Tate cohomology groups. The bottom row comes from the inflation-restriction sequence \cite[Ch VII, \S 6]{Serre1967} corresponding to the short exact sequence of $\Gamma$-modules \eqref{eqn:exactseq}.
     The rightmost and leftmost isomorphisms are given by Lemma \ref{lemma:tH iso}.

    \noindent
    \textbf{Step 3:} We show that the right-most square of the diagram \eqref{diag:cor} anti-commutes. 
    
    We first describe the map $H^1(\gal_{\E}^{\ab},\dlt)\to H^2(\Gamma,\dlt)$ explicitly. Let $\alpha$ be a 1-cocycle representing a class in $H^1(\gal_{\E}^{\ab},\dlt).$ Extend it to a cochain $\beta:\mathscr{G}_{\E/\K}\to \dlt$ by defining 
    $$\beta(\rho g_{\gamma})=\alpha(\rho) \mbox{ for }\rho\in \gal_{\E}^{\ab}, \gamma\in \Gamma.$$ Then $d\beta$ is a 2-cocycle which is inflation of an element in $H^2(\Gamma,\dlt)$. This element of $H^2(\Gamma,\dlt)$ is the image of $\alpha$. This map is called the transgression map.
     \par We have $\mathscr{G}_{\E/\K}=\bigsqcup_{\gamma\in\Gamma} \gal_{\E}^{\ab} g_{\gamma}$ where $g_{\gamma}$ are right or left coset representatives. Using Remark \ref{rk:canonicalclass} we have, $$g_\gamma.g_{\gamma'}=u_{\E/\K}(\gamma,\gamma')g_{\gamma.\gamma'}.$$
      Let $\alpha \in {H^1(\gal_{\E}, \dlt)}^\Gamma$ and $d\beta$ be the image of $\alpha$ under the transgression map. Then, 
     \begin{equation*}
       \begin{split}
         d\beta(\gamma,\gamma') & =d\beta(g_\gamma,g_{\gamma'})\\
        &  =\gamma\beta(g_\gamma')-\beta(g_\gamma.g_{\gamma'})+\beta(g_{\gamma'})\\
          & = 0-\alpha(u_{\E/\K}(\gamma,\gamma'))+0 = -\alpha(u_{\E/\K}(\gamma,\gamma')).
       \end{split}
    \end{equation*}
    \noindent
   \textbf{Step 4:} We now prove that the left-most square of the diagram \eqref{diag:cor} commutes. We first show that $\cor$ maps an element of $\tH^{-1}(\Gamma, \Hom(\gal_{\E}^{\ab},\dlt))$ into the subgroup $ H^1(\Gamma, \dlt)$ of $H^1(\mathscr{G}_{\E/\K}, \dlt)$.
    For this, let $\alpha\in \Hom(\gal_{\E}^{\ab},\dlt) $, let $\rho\in \gal_{\E}^{\ab}$ and $g \in \mathscr{G}_{\E/\K}$. Then 
      \begin{equation*}
        \begin{split}
        \cor(\alpha)(\rho g) & = \sum_{\gamma\in\Gamma} g_{\gamma}\alpha(g_\gamma^{-1}\rho g_\gamma g_\gamma^{-1} g g_{\gamma'})\\
          & = \sum_{\gamma\in\Gamma}\gamma\alpha(\rho) + (\cor(\alpha))(g)\\
          & = \nm(\alpha)(\rho)+ \cor(\alpha)(g).
        \end{split}
      \end{equation*}
      If $\alpha\in \tH^{-1}(\Gamma, \Hom(\gal_{\E}^{\ab},\dlt))$ 
      then $\nm(\alpha)(\rho)=0$	 hence $\cor(\alpha)$ depends only on the coset of $g$ i.e. $\cor(\alpha)$ arises as inflation from an element of $H^1(\Gamma, \dlt)$.
      
       Now we show that the restriction of $\cor$ to $\tH^{-1}(\Gamma, \Hom(\gal_{\E}^{\ab},\dlt))$ is $\_\cup u_{\E/\K}$. Let $x\in\Gamma$ be a fixed element, as $\cor(\alpha)$ arises as inflation we can evaluate it at $x$. Since $g_xg_{x^{-1}\gamma}=u_{\E/\K}(x,x^{-1}\gamma)g_\gamma$, 
      $g_{\gamma'}$ in this case is $g_{x^{-1}\gamma}$.
     \begin{equation}\label{c1}
         \begin{split}
          \cor(\alpha)(x) &= \sum_{\gamma\in\Gamma} g_{\gamma}\alpha(g_{\gamma}^{-1} g_{x}g_{x^{-1}\gamma})\\
          &= \sum_{\gamma\in\Gamma} g_{\gamma}\alpha(g_{\gamma}^{-1}g_{x}g_{x^{-1}\gamma}g_{\gamma}^{-1}g_{\gamma})\\
           &= \sum_{\gamma\in\Gamma} (\gamma\alpha)(g_{x}g_{x^{-1}\gamma}g_{\gamma}^{-1})\\
           &= \sum_{\gamma\in\Gamma} (\gamma\alpha)(u_{\E/\K}(x,x^{-1}\gamma)).
         \end{split}
     \end{equation}
    
     As in Theorem 8.6 \cite{Mil2006} we use dimension shifting in order to make computations with cup products easier. We have the exact sequence:
     \begin{equation}
       0\to I_\Gamma\to\Z[\Gamma]\to\Z\to0.
     \end{equation}
     This gives that the following sequences are exact: 
     \begin{equation}
         0\to I_\Gamma\otimes \dlt\to\Z[\Gamma]\otimes \dlt\to\dlt\to0.
     \end{equation}
     \begin{equation}
          0\to I_\Gamma\otimes \Hom(\gal_{\E}^{\ab}, \dlt)\to\Z[\Gamma]\otimes  \Hom(\gal_{\E}^{\ab}, \dlt)\to  \Hom(\gal_{\E}^{\ab}, \dlt)\to0.
      \end{equation}
     Which gives the following diagram of connecting homomorphisms:
    \begin{equation}\label{diag:connectinghom}
	\begin{tikzcd}
	\tH^{-1}(\Gamma, \Hom(\gal_{\E}^{\ab},\dlt)) \arrow[r,"d^{-1}"]\arrow[d,"\_\cup u_{\E/\K}"] & \tH^{0}(\Gamma,\Hom(\gal_{\E}^{\ab},\dlt\otimes I_{\Gamma}))\arrow[d,"\_\cup u_{\E/\K}\otimes 1"] & \\
	\tH^1(\Gamma,\dlt)\arrow[r,"d^1"]& \tH^2(\Gamma,\dlt\otimes I_{\Gamma}).
	\end{tikzcd}
     \end{equation}
    The map $\_\cup u_{\E/\K}$ is the unique map that makes the diagram \eqref{diag:connectinghom} commute. Hence it is enough to show that the diagram \eqref{diag:connectinghom} commutes with $\cor$ in the place of $\_\cup u_{\E/\K}$. Let $\alpha\in \tH^{-1}(\Gamma, \Hom(\gal_{\E}^{\ab},\dlt))$ we have $\nm(\alpha)=0$ then,
    \begin{equation*}
     \begin{split}
        d^{-1}(\alpha) &=\nm(\alpha\otimes 1)\\
        &=\nm(\alpha\otimes 1)-\nm(\alpha)\otimes 1\\
        &=\sum_{\gamma\in\Gamma}\gamma\alpha\otimes(\gamma-1)
     \end{split}
     \end{equation*}
     and, 
     \begin{equation}
     d^1(\beta)(\gamma_1,\gamma_2)=\gamma_1\beta(\gamma_2)\otimes(\gamma_1-1)
     \end{equation}
     for $\gamma_1,\gamma_2\in\Gamma$.
     Now using \eqref{c1},
     \begin{equation}\label{eq:step4-1}
        (d^1\circ \cor\alpha)(\gamma_1,\gamma_2)=\sum_{\gamma\in\Gamma}\gamma_1.(\gamma\alpha)(u_{\E/\K}(\gamma_1,\gamma_2))\otimes(\gamma_1-1)
     \end{equation}
     and, 
     \begin{equation}\label{eq:step4-2}
      (d^{-1}\alpha\cup(u_{\E/\K}\otimes1))(\gamma_1,\gamma_2)=\sum_{\gamma\in\Gamma}\gamma\alpha(u_{\E/\K}(\gamma_1,\gamma_2))\otimes(\gamma-1).
     \end{equation}
     It is enough to show that the above equations are same.

     \noindent
    A co-chain in $C^1(\Gamma, \dlt\otimes I_{\Gamma})$ is of the form $\sum_{\gamma\in\Gamma}F_\gamma\otimes(\gamma-1)$ with $F_\gamma$ a map $\Gamma\to \dlt$.

     \noindent
     Let $\gamma_1,\gamma_2\in\Gamma$. A co-boundary in $B^2(\Gamma,\dlt\otimes I_\Gamma)$ can be written as: $$d \left(\sum_{\gamma\in\Gamma}F_\gamma\otimes(\gamma-1)\right)(\gamma_1,\gamma_2)=\sum_{\gamma\in\Gamma}(\gamma_1.F_{\gamma_1^{-1}\gamma}(\gamma_2)-F_\gamma(\gamma_1\gamma_2)+F_\gamma(\gamma_1))\otimes(\gamma-1)-\sum_{\gamma\in\Gamma}\gamma_1F_\gamma(\gamma_2)\otimes(\gamma_1-1).$$
     The above expression is obtained using:
     \begin{equation*}
       \begin{split}
       \gamma_1\left(\sum_{\gamma\in\Gamma}F_\gamma(\gamma_2)\otimes(\gamma-1)\right)&=\sum_{\gamma\in\Gamma}\gamma_1.F_\gamma(\gamma_2)\otimes(\gamma_1\gamma-\gamma_1)\\
        &= \sum_{\gamma\in\Gamma}\gamma_1.F_\gamma(\gamma_2)\otimes(\gamma_1\gamma-1)-\sum_{\gamma\in\Gamma}\gamma_1.F_\gamma(\gamma_2)\otimes(\gamma_1-1)\\
        &=\sum_{\gamma\in\Gamma}\gamma_1.F_{\gamma^{-1}\gamma}(\gamma_2)\otimes(\gamma-1)-\sum_{\gamma\in\Gamma}\gamma_1.F_\gamma(\gamma_2)\otimes(\gamma_1-1).
        \end{split}
     \end{equation*}
    Let $F_\gamma(\gamma_2)=(\gamma\alpha)(u_{\E/\K}(\gamma_2,\gamma_2^{-1}\gamma))$. Then,
    \begin{center}
	\begin{equation*}
	d\left(\sum_{\gamma\in\Gamma}F_\gamma\otimes(\gamma-1)-(d^{-1}\alpha)\cup(u_{\E/\K}\otimes 1)+d^1\circ\cor(\alpha)\right)(\gamma_1,\gamma_2)=
	\end{equation*}
    \end{center}

    \begin{equation*}
      \sum_{\gamma\in\Gamma}(\gamma\alpha)(\gamma_1 u_{\E/\K}(\gamma_2,\gamma_2^{-1}\gamma_1^{-1}\gamma).u_{\E/\K}(\gamma_1\gamma_2,\gamma_2^{-1}\gamma_1^{-1}\gamma)^{-1}.u_{\E/\K}(\gamma_1,\gamma^{-1}\gamma).u_{\E/\K}(\gamma_1,\gamma_2)^{-1}))\otimes (\gamma-1).
     \end{equation*}
     Using $h$ for $\gamma_2^{-1}\gamma_1^{-1}\gamma$ this becomes,
     \begin{equation*}
     \sum_{\gamma\in\Gamma}(\gamma\alpha)(\gamma_1 u_{\E/\K}(\gamma_2,h).u_{\E/\K}(\gamma_1\gamma_2,h)^{-1}.u_{\E/\K}(\gamma_1,\gamma_2 h).u_{\E/\K}(\gamma_1,\gamma_2)^{-1})\otimes(\gamma-1).
      \end{equation*}
      Each term in the above sum is zero because $u_{\E/\K}$ is a 2-cocycle. Therefore,
      \begin{equation*}
       d\left(\sum_{\gamma\in\Gamma}F_\gamma\otimes(\gamma-1)\right)=(d^{-1}\alpha)\cup(u_{\E/\K}\otimes 1)-(d^1\circ\cor(\alpha)).
       \end{equation*}
      This shows \eqref{eq:step4-1} and \eqref{eq:step4-2} are the same.

      \noindent
	 \textbf{Step 5:}
      To show that the middle square of the diagram \eqref{diag:cor} commutes, it is enough to show that
  	the composition:
  	\begin{equation*}
  	H^1(\gal_{\E}^{\ab},\dlt)\xrightarrow{\cor} H^1(\mathscr{G}_{\E/\K},\dlt)\xrightarrow{\res} H^1(\gal_{\E}^{\ab},\dlt) 
  	\end{equation*}
  	is the norm map ($\nm$)  for the action of $\Gamma$.
   \par This follows as, for $\alpha\in Z^1(\gal_{\E}^{\ab},\dlt)$ and $g\in\mathscr{G}_{\E/\K}$,
  	\begin{equation*}
  	\cor(\alpha)(g)=\sum_{\gamma\in\Gamma} g_{\gamma}\alpha(g_{\gamma}^{-1} g g_{\gamma'}).
  	\end{equation*}
  	If $g\in \gal_{\E}^{\ab}$ this becomes $\cor(\alpha)(g)=\sum_{\gamma\in\Gamma} g_{\gamma}\alpha(g_{\gamma}^{-1} g g_{\gamma})=\nm(\alpha)(g).$

    Thus all the above steps collectively show that 
     \begin{equation}
       \cor: {\Hom(\gal_{\E}^{\ab},\dlt})_\Gamma \to H^1(\mathscr{G}_{\E/\K}, \dlt)
      \end{equation}  
      is an isomorphism.\\
      \noindent
     \textbf{Step 6:}
      We will now prove that the following map is an isomorphism:
     \begin{equation}
       \cor: {\Hom_{\ell\text{-adic}}(\gal_{\E}^{\ab},\dlt})_\Gamma \to H^1_{\ell\text{-adic}}(\mathscr{G}_{\E/\K}, \dlt).
      \end{equation} 
    
     $\gal_{\E}^{\ab}$ is a finite index normal subgroup of $\mathscr{G}_{\E/\K}$. Thus for $f\in \Hom(\gal_{\E}^{\ab},\dlt)$, $\cor(f)$ is continuous iff its restriction to $\gal_{\E}^{\ab}$ is continuous i.e. $\res\circ\cor(f)$ is continuous. Step 5 then gives us, $\cor(f)$ is continuous iff $\nm(f)$ is continuous. 
    
     Since $\Hom_{\ell\text{-adic}}(\gal_{\E}^{\ab},\dlt)_{\Gamma}\cong\Hom_{\ell\text{-adic}}({(\cchg(T)\otimes\gal_{\E}^{\ab})}^\Gamma, \mql)$ as $\Gamma$- modules we have,
      \begin{equation*}
          \begin{split}
          \nm(f) \text{ is continuous} \Leftrightarrow & f|_{\nm(\cchg(T)\otimes\gal_{\E}^{\ab})} \text{ is continuous}. \\
          f|_{\nm(\cchg(T)\otimes\gal_{\E}^{\ab})} \text{ is continuous}\Leftrightarrow &
          f|_{{(\cchg(T)\otimes\gal_{\E}^{\ab})}^\Gamma} \text{ is continuous}.
          \end{split}
      \end{equation*}
      The last implication follows as $\nm(\cchg(T)\otimes \gal_{\E}^{ab})$ is a finite index subgroup of ${(\cchg(T)\otimes \gal_{\E}^{ab})}^\Gamma$ ( Prop. 4 \cite[\S 2.3]{Serre1961}). Thus we finally have,
      \begin{equation}
          \cor(f)\text{ is continuous} \Leftrightarrow f|_{{(\cchg(T)\otimes \gal_{\E}^{ab})}^\Gamma} \text{ is continuous}.
      \end{equation}
      \par Any $f\in \Hom_{\ell\text{-adic}}({(\cchg(T)\otimes \gal_{\E}^{ab})}^\Gamma,\mql) $ comes from a continuous element of $\Hom(\cchg(T)\otimes\gal_{\E}^{\ab},\mql)$ i.e. an element of $\Hom_{\ell\text{-adic}}(\cchg(T)\otimes\gal_{\E}^{\ab},\mql)$ using Proposition \ref{cexact}.  This completes the proof.
      
   \end{proof}   
    We will now prove the Theorem \ref{thm:inertial LLC}
    \begin{proof}[Proof of Theorem \ref{thm:inertial LLC}]
    Let $\E/\K$ be a finite Galois extension where the torus $T$ splits and $\Gamma$ denote the Galois group $\gal(\E/\K)$. Theorem \ref{thm:fundamentalgroup} gives us an isomorphism:
    \begin{equation*}
       \Hom_{\ell\text{-adic}}(\pi_1(\lp{T}),\mql)\cong\Hom_{\ell\text{-adic}}((\cchg(T)\otimes\gal_{\E}^{\ab})^\Gamma,\mql).
     \end{equation*}
     Proposition \ref{prop:cor} then gives 
      \begin{equation*}
       \Hom_{\ell\text{-adic}}(\pi_1(\lp{T}),\mql)\cong \Hom_{\ell\text{-adic}}((\cchg(T)\otimes\gal_{\E}^{\ab})^\Gamma,\mql)\xrightarrow[\cong]{\cor}H^1_{\ell\text{-adic}}(\mathscr{G}_{\E/\K},\dlt).
     \end{equation*}
      Since $\gal_{\E}$ acts trivially on $\dlt$, pull-back along the quotient map  $\gal_{\K}\to \mathscr{G}_{\E/\K}$ gives:
                 $$H^1_{\ell\text{-adic}}(\gal_{\K}, \dlt)=H^1_{\ell\text{-adic}}(\mathscr{G}_{\E/\K}, \dlt).$$ 
     This gives us the required isomorphism:
       \begin{equation*}
       \varphi_{T}:\Hom_{\ell\text{-adic}}(\pi_1(\lp{T}),\mql)\xrightarrow{\cong} H^1_{\ell\text{-adic}}(\gal_{\K},\dlt).
     \end{equation*}
     The isomorphism in the Theorem \ref{thm:main1} is functorial in $T$. It can also be seen that the corestriction map in the Proposition \ref{prop:cor} is functorial. Hence $\varphi_T$ is a morphism of functors.

     Let $\K'/\K$ be a finite separable extension. Let $\E/\K$ be a finite Galois extension such that $\E\supset\K'$. Let $T'=\res_{\K'/\K}\G_m$. Thus $\chg(T')=\ind_{\gal_{\K'}}^{\gal_{\K}}\chg(\G_m)$ as a $\gal_{\K}$-module. Hence $\widecheck{T'}(\ql)=\ind_{\gal_{\K'}}^{\gal_{\K}}\chg(\G_m)
     \otimes\mql$. We will now prove that the morphism $\varphi_{T'}$ is the same as that constructed in \eqref{eqn:induced}. Note that we have
     $$ H^1_{\ell\text{-adic}}(\gal_{\E},\widecheck{T'}(\ql))=\Hom_{\ell\text{-adic}}(\gal_{\E},\widecheck{T'}(\ql)),$$  $$H^1_{\ell\text{-adic}}(\gal_{\K'},\mql)=\Hom_{\ell\text{-adic}}(\gal_{\K'},\mql).$$
      Consider the diagram:
      \begin{equation}
         \begin{tikzcd}
         H^1_{\ell\text{-adic}}(\gal_{\E},\widecheck{T'}(\ql))\arrow[r,"\cor_{\gal_{\E}}^{\gal_{\K}}"]\arrow[d,"\cor_{\gal_{\E}}^{\gal_{\K'}}"]& H^1_{\ell\text{-adic}}(\gal_{\K},\widecheck{T'}(\ql))\arrow[d,equal]\\
         H^1_{\ell\text{-adic}}(\gal_{\K'},\widecheck{T'}(\ql))\arrow[r,"\cor_{\gal_{\K'}}^{\gal_{\K}}"]\arrow[d] & H^1_{\ell\text{-adic}}(\gal_{\K},\widecheck{T'}(\ql))\arrow[d,equal]\\
         H^1_{\ell\text{-adic}}(\gal_{\K'},\mql) & H^1_{\ell\text{-adic}}(\gal_{\K},\widecheck{T'}(\ql)).\arrow[l,"\sh"] 
         \end{tikzcd}
       \end{equation}
       The map \(\sh\) is due to Shapiro's lemma as in Equation \eqref{eqn:Sh}.
      The top part of the  diagram is commutative as $\cor_{\gal_{\E}}^{\gal_{\K}}=\cor_{\gal_{\K'}}^{\gal_{\K}}\circ\cor_{\gal_{\E}}^{\gal_{\K'}}$. The left-most map in the bottom part of the diagram is obtained using Frobenius reciprocity. The bottom part of the diagram is commutative as it is commutative for $0$-th cohomology groups(See \cite[\S 7.7]{JKYu[2004]}).\\
      This proves the required claim.
      
      The fact that such a $\varphi_T$ is uniquely determined can be proved using a proof similar to that in \cite[\S 7.6]{JKYu[2004]}.
    \end{proof}
    
   \subsection{Proof of Theorem \ref{thm:main3}}\label{sec:pfmain3}
    Theorem \ref{thm:main3} relates the usual local Langlands correspondence  \eqref{eq:LLCfortori} for tori over $K$ to the inertial local Langlands correspondence for tori over $\K$. We first relate the reciprocity map in local class field theory and the inertial local class field theory map constructed by Serre \cite{Serre1961}.
    \par Recall that here $K$ denotes a local field with finite residue field $\Fq$ and that $\K$ is the completion of maximal unramified extension of $K$. Let $E/K$ be a finite Galois extension. Hence $\E/\K$ is a finite Galois extension. Let $K'$ be the maximal unramified extension of $K$ inside $E$, hence $\E\supset \K'=\K$. The Galois groups  $\gal(\E/\K)$ and $\gal(E/K')$ are isomorphic via the restriction map. Denote  $\gal(\E/\K)=\gal(E/K')$ by $\Gamma$ 
    and the group $\gal(E/K)$ by $\wGamma$. Let the residue field of $E, K'$ be $\F_{q^n}$. The Galois group $\gal(K'/K)$ is isomorphic to the Galois group $\gal(\F_{q^n}/\Fq)$ and hence is a cyclic group $\langle \Fr \rangle$ of order $n$ generated by $\Fr$. We have the short exact sequence:
    \begin{equation}
        0\to\Gamma\to\wGamma\to \langle \Fr \rangle\to 0.
    \end{equation}
     The residue field of each of $\K,\K'$ and $\E$ is $k=\Fqcl$. 
       
    The kernel of the $\Fq$-Lang isogeny $\Lang_{K}:\lp\G_m\to \lp\G_m$ is $\lp \G_m(\Fq)=\Ocal_{K}^{\times}$.
    Hence by Lemma \ref{lemma:pi1to fqpts} we have a surjective homomorphism, $\p_{K}:\pi_1(\lp{\G_m})\twoheadrightarrow \Ocal_{K}^{\times}$. We have the following lemma:
    \begin{lemma}
    The following diagram is commutative
    \begin{center}
          \begin{tikzcd}
              \pi_{1}(\elp{\G_{m}})\arrow[r,"\p_{E}", two heads]\arrow[d,"\nm_{\Gamma}"]&\Ocal_{E}^{\times}\arrow[d,"\nm_{\Gamma}"]\\
              \pi_1(\lp{\G_m})\arrow[r,"\p_{K'} ", two heads]&\Ocal_{K'}^{\times}.
          \end{tikzcd}
      \end{center}  
     Here $\nm_{\Gamma}$ is the norm map for the group $\Gamma$. Note that $E/K'$ is a totally ramified finite Galois extension.
      \end{lemma}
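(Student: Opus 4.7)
The plan is to realize both surjections $\p_E$ and $\p_{K'}$ as boundary maps of a single morphism of short exact sequences in $\mathscr{P}$, obtained from the $\F_{q^n}$-Lang isogeny. Concretely, since $E/K'$ is totally ramified with residue extension of degree one, both $\elp\G_m$ and $\lp\G_m = \lp_{\K'}\G_m$ are equipped with an $\F_{q^n}$-Frobenius endomorphism $\Fr^n$ (coming from the $\F_{q^n}$-structure on $\Ocal_E$ and $\Ocal_{K'}$, respectively). By Lang's theorem (Theorem \ref{thm:langprofin}), the $n$-th Lang isogeny $\Lang_n$ fits into short exact sequences in $\mathscr{P}$:
\[
0\to \Ocal_{E}^{\times}\to \elp\G_m\xrightarrow{\Lang_n}\elp\G_m\to 0 \quad\text{and}\quad 0\to \Ocal_{K'}^{\times}\to \lp\G_m\xrightarrow{\Lang_n}\lp\G_m\to 0,
\]
and the maps $\p_E$ and $\p_{K'}$ are the canonical projections $\pi_1(\cdot)\twoheadrightarrow \pi_1(\cdot)_{\Fr^n}$ furnished by Lemma \ref{lemma:pi1to fqpts} applied to these sequences.

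Next, I would argue that the norm morphism $\nm_{\Gamma}$, defined on the pro-algebraic groups by $x\mapsto \prod_{\gamma\in\Gamma}\gamma(x)$, provides a morphism between these two short exact sequences. Since $\elp\G_m$ is connected, the image of $\nm_{\Gamma}:\elp\G_m\to (\elp\G_m)^{\Gamma}$ is connected, hence lies in the connected component $\lp\G_m$, so $\nm_{\Gamma}$ is well-defined as a map $\elp\G_m\to \lp\G_m$. The key point (and the only real thing to check) is that $\Fr^n$ commutes with the $\Gamma$-action: indeed, $\Gamma=\gal(E/K')$ fixes $K'$ pointwise, so the $\F_{q^n}$-Frobenius (which lifts the Frobenius of the residue field $\F_{q^n}$ of $K'$) commutes with every element of $\Gamma$. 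A direct calculation then gives
\[
\Lang_n(\nm_{\Gamma}(x)) \;=\; \prod_{\gamma\in\Gamma}\gamma(x)\cdot \Fr^n\!\left(\prod_{\gamma\in\Gamma}\gamma(x^{-1})\right) \;=\; \prod_{\gamma\in\Gamma}\gamma\bigl(x\,\Fr^n(x^{-1})\bigr) \;=\; \nm_{\Gamma}(\Lang_n(x)),
\]
so $\nm_{\Gamma}$ intertwines the two Lang isogenies.

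Finally, applying the left-exact functor $\pi_1$ to this morphism of short exact sequences and using the long exact sequence \eqref{eq:les} yields a commutative diagram
\[
\xymatrix{
0\ar[r] & \pi_1(\elp\G_m)\ar[r]^{\Lang_n}\ar[d]^{\nm_\Gamma} & \pi_1(\elp\G_m)\ar[r]^{\p_E}\ar[d]^{\nm_\Gamma} & \Ocal_E^\times \ar[r]\ar[d]^{\nm_\Gamma} & 0\\
0\ar[r] & \pi_1(\lp\G_m)\ar[r]^{\Lang_n} & \pi_1(\lp\G_m)\ar[r]^{\p_{K'}} & \Ocal_{K'}^\times \ar[r] & 0,
}
\]
whose right-most square is precisely the square of the lemma. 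Note that the norm map on $\Ocal_E^\times$ obtained here as $\pi_0(\nm_\Gamma)$ agrees with the usual norm $\nm_\Gamma$ on units, since it is induced from $x\mapsto\prod_\gamma \gamma(x)$ on $k$-points. I do not anticipate a serious obstacle; the whole content is the commutation of $\Fr^n$ with $\Gamma$ (guaranteed by the definition of $K'$), after which functoriality of $\pi_1$ does the rest.
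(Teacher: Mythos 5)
Your proof is correct and follows essentially the same approach as the paper: form the morphism of $\F_{q^n}$-Lang short exact sequences given by $\nm_\Gamma$ (which requires the compatibility of the norm with the $\F_{q^n}$-structure, i.e. that $\Fr^n$ commutes with $\Gamma$), then apply $\pi_1$ and read off the right-hand square. The paper states the compatibility without spelling out the calculation $\Lang_n\circ\nm_\Gamma = \nm_\Gamma\circ\Lang_n$, which you make explicit; otherwise the arguments coincide.
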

       \begin{proof}
     Observe that $\elp \G_m(\F_{q^n})=\Ocal_{E}^{\times}$ and $\lp \G_m(\F_{q^n})=\Ocal_{K'}^{\times}$.  The norm map $\nm_{\Gamma}:\elp \G_m\to \lp \G_m$ is compatible with the $\F_{q^n}$-structures. Hence, the following diagram is commutative, and the lemma follows:
    \begin{equation}\label{eq:LangE/K}
      \begin{tikzcd}
      0\arrow[r] &\Ocal_{E}^{\times}\arrow[r]\arrow[d,"\nm_{\Gamma}"] &\elp \G_m \arrow[r,"\Lang_E"]\arrow[d,"\nm_{\Gamma}"] \arrow[r] & \elp \G_m \arrow[r]\arrow[d,"\nm_{\Gamma}"] &0\\
      0\arrow[r] &\Ocal_{K'}^{\times}\arrow[r] &\lp \G_m\arrow[r,"\Lang_{K'}"] &\lp \G_m \arrow[r] &0.
      \end{tikzcd}
    \end{equation}
    \end{proof}
    
    \noindent 
    The previous lemma gives us a homomorphism:
      $$\pi_1(\lp{\G_m})/\nm_{\Gamma}(\pi_1(\elp{\G_{m}}))\xrightarrow[]{{\p}_{K'}}\Ocal_{K'}^{\times}/\nm_{\Gamma}(\Ocal_{E}^{\times}).$$
    
    \noindent
     Thus for any finite Galois extension $E/K$, we have a homomorphism $$\gal(\E/\K)^{\ab}\xrightarrow[\cong]{\delta_{\E/\K}}\pi_1(\lp{\G_m})/\nm_{\Gamma}(\pi_1(\elp{\G_{m}}))\xrightarrow[]{{\p}_{K'}}\Ocal_{K'}^{\times}/\nm_{\Gamma}(\Ocal_{E}^{\times})\hookrightarrow K'^{\times}/\nm_{\Gamma}(E^{\times}).$$
      Recall that $\delta_{\E/\K}:\gal(\E/\K)^{\ab}\xrightarrow[\cong]{\delta_{\E/\K}}\pi_1(\lp{\G_m})/\nm_{\Gamma}(\pi_1(\elp{\G_{m}}))$ is as in \eqref{eq:defn-delta}. We then have the following lemma:
     \begin{lemma}\label{lemma:Dwork}
     Let $E/K$ be a finite abelian Galois extension. Let $K'$ be as mentioned above. 
     Then the following diagram commutes:
     \begin{center}
         \begin{tikzcd}
            \gal(\E/\K)\arrow[r,"\delta_{\E/\K}","\cong"']\arrow[d,"\res","\cong"'] & \pi_1(\lp{\G_m})/\nm_{\Gamma}(\pi_1(\elp{\G_{m}}))\arrow[d,"\p_{K'}"]\\
            \gal(E/K')\arrow[r,"\re^{-1}","\cong"'] & K'^{\times}/\nm_{\Gamma}(E^{\times}).
         \end{tikzcd}
     \end{center}
     Here $\re$ denotes the reciprocity map \eqref{rec}.
     \end{lemma}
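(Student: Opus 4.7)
The plan is to interpret both the composition $\p_{K'}\circ\delta_{\E/\K}$ and the map $\re^{-1}$ as cup products with canonical 2-cohomology classes, and then to reduce the commutativity of the diagram to a single identification of these classes under the Lang projection $\p_E:\pi_1(\elp\G_m)\onto \Ocal_E^\times$.

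First I would unpack the two sides. By \eqref{eq:defn-delta}, Serre's map $\delta_{\E/\K}$ is cup product with the canonical class $u_{\E/\K}\in \tH^2(\Gamma,\pi_1(\elp\G_m))$. Since $E/K'$ is a totally ramified Galois extension with group $\Gamma$, classical local class field theory identifies $\re^{-1}:\gal(E/K')^{\ab}\xrightarrow{\cong} K'^\times/\nm_\Gamma(E^\times)$ with cup product with Tate's local fundamental class $u_{E/K'}\in \tH^2(\Gamma,E^\times)$. Using the short exact sequence $0\to \Ocal_E^\times\to E^\times\to \Z\to 0$, total ramification implies that $u_{E/K'}$ lifts canonically to a class in $\tH^2(\Gamma,\Ocal_E^\times)$ whose cup product with $\overline 1\in\tH^{-2}(\Gamma,\Z)$ lands inside the subgroup $\Ocal_{K'}^\times/\nm_\Gamma(\Ocal_E^\times)\hookrightarrow K'^\times/\nm_\Gamma(E^\times)$. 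I continue to write $u_{E/K'}$ for this lift.

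Next, naturality of cup product along the $\Gamma$-equivariant surjection $\p_E$ yields the commutative square
\[
\xymatrix@C=3em{
\tH^{-2}(\Gamma,\Z) \ar[r]^-{\_\cup u_{\E/\K}} \ar@{=}[d] & \tH^0(\Gamma,\pi_1(\elp\G_m))\ar[d]^-{\p_E}\\
\tH^{-2}(\Gamma,\Z) \ar[r]^-{\_\cup \p_E(u_{\E/\K})} & \tH^0(\Gamma,\Ocal_E^\times),
}
\]
where by the previous lemma the right-hand vertical map, upon taking $\tH^0$, is exactly the $\p_{K'}$ appearing in the statement of Lemma \ref{lemma:Dwork}. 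Commutativity of the lemma's diagram therefore reduces to the single equality
\[
\p_E(u_{\E/\K})=u_{E/K'}\quad\text{in }\tH^2(\Gamma,\Ocal_E^\times).
\]

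The main obstacle is precisely this identification of canonical classes, which is the substantive content of the lemma. I would establish it by running Serre's construction through $\p_E$ and comparing to the classical construction. Recall that $u_{\E/\K}$ is obtained by chasing $\overline 1\in \tH^0(\Gamma,\Z)$ along the connecting isomorphisms associated with
\[
0\to \Ocal_\E^\times\to \E^\times\to \Z\to 0\qquad\text{and}\qquad 0\to \pi_1(\elp\G_m)\to \overline{\elp\G_m}(k)\to \Ocal_\E^\times\to 0,
\]
which are isomorphisms by cohomological triviality of $\E^\times$ and of $\overline{\elp\G_m}(k)$. Under $\p_E$, the universal-cover sequence maps to the Lang sequence $0\to \Ocal_E^\times \to \Ocal_\E^\times\xrightarrow{\Lang_E}\Ocal_\E^\times\to 0$, whose middle term is cohomologically trivial over $\Gamma$ by Lang's theorem. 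A dimension-shifting or direct cocycle computation comparing the two parallel resolutions then shows that $\p_E(u_{\E/\K})$ equals the class obtained by applying the connecting map of $0\to \Ocal_E^\times\to E^\times\to \Z\to 0$ to $\overline 1$, which by definition is the lift of $u_{E/K'}$ to $\tH^2(\Gamma,\Ocal_E^\times)$. This is the explicit identity underlying Dwork's description of the reciprocity map in the totally ramified case, which justifies the name of the lemma.
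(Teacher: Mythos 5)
Your high-level strategy is genuinely different from the paper's: you recast both maps as cup products with $2$-classes and try to reduce commutativity to a single identity $\p_E(u_{\E/\K})=u_{E/K'}$ in $\tH^2(\Gamma,\Ocal_E^\times)$. The paper instead argues at the cocycle level, directly quoting Dwork's explicit formula for $\re^{-1}$ and Serre's explicit description of $\p_{K'}\circ\delta_{\E/\K}$ (from the proof of Proposition~10 in \cite[\S2.5]{Serre1961}) and checking that they agree term-by-term. Your reduction is a sensible conceptual move, and the two approaches would buy roughly the same thing if completed, but yours is not actually completed, and one step in it is wrong.

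The gap is in the sentence ``Under $\p_E$, the universal-cover sequence maps to the Lang sequence $0\to\Ocal_E^\times\to\Ocal_\E^\times\xrightarrow{\Lang_E}\Ocal_\E^\times\to 0$, whose middle term is cohomologically trivial over $\Gamma$ by Lang's theorem.'' The middle term here is $\Ocal_\E^\times=\elp\G_m(k)$, and it is \emph{not} cohomologically trivial over $\Gamma$. Indeed, Serre's construction recalled in \S\ref{sec:canonclass} of the paper \emph{relies} on the isomorphism $\tH^q(H,\Z)\xrightarrow{\cong}\tH^{q+1}(H,\elp\G_m(k))$ for all $q$ and all $H\subset\Gamma$; in particular $\tH^1(\Gamma,\Ocal_\E^\times)\cong\Z/|\Gamma|\Z\neq 0$. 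Lang's theorem only provides surjectivity of the Lang isogeny (hence exactness of the Lang sequence); it says nothing about vanishing of the $\Gamma$-Tate cohomology of the middle term, which concerns the Galois action of $\Gamma=\gal(\E/\K)$ rather than the Frobenius. Without that triviality, the dimension shift you describe (``comparing the two parallel resolutions'') does not produce the asserted identity of classes, and the promised ``direct cocycle computation'' is never carried out. That cocycle computation is exactly the content of Dwork's theorem, which the paper invokes directly; as written, your proof defers the entire substance of the lemma to an unjustified step.
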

   \begin{proof}
     The extension $E/K'$ is a totally ramified abelian extension. Consider the diagram \ref{eq:LangE/K}. We use the following description of the map $\re^{-1}$ given by Dwork's theorem, i.e. \cite[Ch.XIII, \S5, Cor. to Thm. 2]{Serre1967} (see also \cite{Dwork1958}): Let $\sigma\in \Gamma=\gal(\E/\K)=\gal(E/{K'})$. Then $\sigma(\varpi_{\E})\varpi_{\E}^{-1}\in \Ocal_{\E}^\times$ where $\varpi_{\E}$ is a uniformizer element in $\Ocal_{\E}$. By Lang's theorem, there exists a $y\in \Ocal_{\E}^\times$ such that $\Lang_E(y)=y\Fr_{E}(y^{-1})=\sigma(\varpi_{\E})\varpi_{\E}^{-1}$. Then by {\it loc. cit.}, $\re^{-1}(\sigma)$ is represented by $\nm_\Gamma(y)\in \left(\Ocal_{\K}^\times\right)^{\Fr_{K'}}=\Ocal_{K'}^\times\subset {K'}^\times$. In other words $\re^{-1}(\sigma)$ is given by the coboundary map associated with the diagram \ref{eq:LangE/K} applied to the element $\sigma(\varpi_{\E})\varpi_{\E}^{-1}$ which lies in the kernel of the rightmost vertical map in diagram \ref{eq:LangE/K}.

     Now $\delta_{\E/\K}(\sigma)$ is defined in \cite[\S2.5]{Serre1961} in the same way by applying the coboundary map (to the same element $\sigma(\varpi_{\E})\varpi_{\E}^{-1}$ as above), but now associated with the analogous diagram
     \begin{equation}\label{eq:UniversalcoverE/K}
      \begin{tikzcd}
      0\arrow[r] &\pi_1(L^+_{\E}\G_m)\arrow[r]\arrow[d,"\nm_{\Gamma}"] &\overline{\elp \G_m} \arrow[r,"\pi_{\E}"]\arrow[d,"\nm_{\Gamma}"] \arrow[r] & \elp \G_m \arrow[r]\arrow[d,"\nm_{\Gamma}"] &0\\
      0\arrow[r] &\pi_1(L^+\G_m)\arrow[r] &\overline{\lp \G_m}\arrow[r,"\pi_{\K}"] &\lp \G_m \arrow[r] &0
      \end{tikzcd}
    \end{equation} involving the universal covers $\pi_{\E},\pi_{\K}$ in place of the $\F_{q^n}$-Lang isogenies $L_E,L_{K'}$ in diagram \ref{eq:LangE/K}. Since the universal covers factor through the Lang isogenies, we have a commutative diagram combining \ref{eq:LangE/K},\ref{eq:UniversalcoverE/K} with natural maps from the objects in \ref{eq:UniversalcoverE/K} to the corresponding objects in \ref{eq:LangE/K}. Hence the coboundary map associated with \ref{eq:LangE/K} equals the coboundary map associated with \ref{eq:UniversalcoverE/K} composed with $\p_{K'}$ as desired.
    \end{proof}

     We can now compare the maps \ref{rec} and \ref{th}. We use $\delta$ to denote the map: $\gal_{\K}^{\ab}\to\pi_1(\lp\G_m)$ induced by the system of maps $\{\delta_{\E/\K}\}.$ Recall that $\delta=\theta^{-1}$ where $\theta$ is the map defined by Serre i.e. \eqref{th}.
     \begin{lemma}
      The following diagram is commutative:
     \begin{equation}\label{diagram:Serre reciprocity}
        \begin{tikzcd}
          \gal_{\K}^{\ab} \arrow[r,
            "\delta=\theta^{-1}","\cong"'] \arrow[d] & \pi_1(\lp \mathbb{G}_m) \arrow[d,"\p_K"] \\
          \gal_{K}^{\ab} & K^\times\arrow[l,"\re"'].
        \end{tikzcd}
     \end{equation}
     \end{lemma}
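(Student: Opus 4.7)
The plan is to verify commutativity modulo each finite abelian Galois extension $E/K$ and then pass to the inverse limit, combining Lemma \ref{lemma:Dwork} with two compatibilities: a factorization of $\p_K$ through $\p_{K'}$ via the norm, and the classical norm compatibility of the reciprocity map $\re$.

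Fix a finite abelian Galois extension $E/K$ with maximal unramified subextension $K'\subseteq E$, set $\E=E\cdot\K$, $\Gamma=\gal(\E/\K)=\gal(E/K')$, $n=[K':K]$, and let $\iota:\gal(E/K')\hookrightarrow \gal(E/K)$ be the natural inclusion. Since $\gal_K^{\ab}$ is the inverse limit of the finite quotients $\gal(E/K)$ and $\pi_1(\lp\G_m)$ is the inverse limit of the groups $\pi_1(\lp\G_m)/\nm_\Gamma(\pi_1(\elp\G_m))$, it suffices to show that for every $\sigma\in\gal_{\K}^{\ab}$ one has
\[
\re_K\bigl(\p_K(\delta(\sigma))\bigr)\;=\;\sigma|_E\quad\text{in }\gal(E/K).
\]

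The first key ingredient is the identity
\[
\p_K\;=\;\nm_{K'/K}\circ\p_{K'}\;:\;\pi_1(\lp\G_m)\longrightarrow \mathcal{O}_K^\times,
\]
where $\nm_{K'/K}$ is the field-theoretic norm on units. This follows from the commutative square in Theorem \ref{th: quasi alg sh-fn}(iii) (equivalently Theorem \ref{thm:sh-fn}(iii)) applied to $G=\lp\G_m$ and the integer $n$: read in the direction of Lang projections rather than pullbacks of characters, that square factors $\p_K$ as $\pi_1(\lp\G_m)\xrightarrow{\p_{K'}}\lp\G_m(\F_{q^n})\xrightarrow{\nm_n}\lp\G_m(\F_q)$, and for $\G_m$ the geometric norm $\nm_n=\prod_{s=1}^{n}\Fr^s$ on $\mathcal{O}_{K'}^\times=\lp\G_m(\F_{q^n})$ is identified with the classical norm $\nm_{K'/K}$ by the functoriality of the Greenberg construction applied to the norm map $\G_{m,\mathcal{O}_{K'}}\to\G_{m,\mathcal{O}_K}$. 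The second ingredient is the standard local class field theory compatibility $\re_K\circ\nm_{K'/K}=\iota\circ\re_{K'}$ on $\mathcal{O}_{K'}^\times$ modulo norms from $E$.

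Combining these two compatibilities with Lemma \ref{lemma:Dwork} applied to $E/K$, for $\sigma\in\gal_{\K}^{\ab}$ we compute in $\gal(E/K)$,
\[
\re_K(\p_K(\delta(\sigma)))=\re_K\bigl(\nm_{K'/K}(\p_{K'}(\delta_{\E/\K}(\sigma)))\bigr)=\iota\bigl(\re_{K'}(\p_{K'}(\delta_{\E/\K}(\sigma)))\bigr)=\iota(\sigma|_E)=\sigma|_E,
\]
which is precisely the image of $\sigma$ under $\gal_{\K}^{\ab}\to\gal_K^{\ab}\to\gal(E/K)$. Taking the inverse limit over finite abelian $E/K$ yields commutativity of \eqref{diagram:Serre reciprocity}. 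The delicate step I expect is the first compatibility, namely the identification of Lusztig's geometric norm $\nm_n$ with the field-theoretic norm $\nm_{K'/K}$ on units through the Greenberg functor; once that matching is in place, the remainder is a routine combination of Dwork's formula (Lemma \ref{lemma:Dwork}) with the classical norm compatibility of the reciprocity map.
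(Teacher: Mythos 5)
Your proposal is correct and follows essentially the same route as the paper: both reduce to finite abelian quotients, invoke Lemma~\ref{lemma:Dwork} for the comparison over $K'$, observe that $\p_K$ factors as the geometric norm $\nm_n = \nm_{K'/K}$ composed with $\p_{K'}$ (the paper states this directly after \eqref{eq:defnp_K}; you derive it from Theorem~\ref{th: quasi alg sh-fn}(iii)), and finish with the classical norm-functoriality of the reciprocity map. The only cosmetic difference is that you record the argument as a chain of equalities while the paper stacks it as a three-row diagram.
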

     \begin{proof}
     Let $E/K$ be a finite abelian Galois extension. Let $K'$ be maximal unramified extension of $K$ inside $E$. Consider the following diagram:
     \begin{center}
        \begin{tikzcd}
          \gal({\E}/{\K}) \arrow[r,"\delta_{\E/\K}"] \arrow[d,"\res","\cong"'] & \pi_1(\lp{\G_m})/\nm_{\Gamma}(\pi_1(\elp{\G_{m}}))\arrow[d,"\p_{K'}"]\\
           \gal({E}/{K'})\arrow[r,"\re^{-1}"]\arrow[d,hook] & K'^\times/\nm_{\Gamma}(E^\times)\arrow[d,"\nm_{\wGamma/\Gamma}"]\\
          \gal({E}/{K})\arrow[r,"\re^{-1}"] & K^\times/\nm_{\Gamma}(E^\times).
        \end{tikzcd}
     \end{center}
      The top part of the diagram is commutative by the previous lemma. The bottom part of the diagram is commutative by functorial properties of reciprocity map (See \cite[Ch XI]{Serre1967}).
     Thus the above diagram is commutative for any finite abelian Galois extension $E$ of $K$. Observe that the map ${\p}_K:\pi_1(\lp{\G_m})\twoheadrightarrow \Ocal_{K}^{\times}$ (defined using Lemma \ref{lemma:pi1to fqpts}) is the same as the composition:
     \begin{equation}\label{eq:defnp_K}
         \pi_1(\lp{\G_m})\twoheadrightarrow\Ocal_{K'}^{\times}\twoheadrightarrow \Ocal_{K}^{\times}.
     \end{equation}
     Here the second map is $\nm_{\wGamma/\Gamma}$ i.e. the norm map for the quotient group $\wGamma/\Gamma=\langle\Fr \rangle$. The commutativity of the diagram in the statement then follows.
      \end{proof}
      Consider the torus $T$ defined over $K$ and split over a finite Galois extension $E/K$.
      Theorem \ref{thm:sh-fn} for the pro-algebraic group $\lp{T}$ gives the following isomorphisms:
     \begin{equation*}
     \Hom_{\sm}(\lp{T}(\mathbb{F}_{q}),\mql)=
     \Hom_{\ell\text{-adic}}(\lp{T}(\mathbb{F}_{q}),\mql)\xrightarrow{\cong}  \Hom_{\ell\text{-adic}}(\pi_{1}(\lp{T}),\mql)^{\Fr}. \\
     \end{equation*} 
      The above isomorphism then gives us:
      \begin{equation*}
     \Hom_{\sm}(\T(\Ocal),\mql)=
     \Hom_{\ell\text{-adic}}(\T(\Ocal),\mql)\xrightarrow{\cong}  \Hom_{\ell\text{-adic}}(\pi_{1}(\lp{T}),\mql)^{\Fr}. \\
     \end{equation*} 
     Next, taking $\Fr$-invariants of the statement of Theorem \ref{thm:inertial LLC} gives:
     \begin{equation*}
       \Hom_{\ell\text{-adic}}(\pi_1(\lp{T}),\mql)^{\Fr}\xrightarrow[\varphi_T]{\cong}  H^1_{\ell\text{-adic}}(\gal_{\K}, \dlt)^{\Fr}.
     \end{equation*}
     Thus we have the following isomorphism:
      \begin{equation}\label{eq:inertialLLC}
      \Hom_{\sm}( \T(\Ocal),\mql)=\Hom_{\ell\text{-adic}}( \T(\Ocal),\mql)\xrightarrow[\psi]{\cong} H^1_{\ell\text{-adic}}(\gal_{\K},\dlt)^{\Fr}.     
      \end{equation}
     The usual local Langlands correspondence can also be stated as:
     \begin{equation}\label{eq:ql-LLC}
      \Hom_{\sm}(T(K),\mql)=\Hom_{\ell\text{-adic}}(T(K),\mql)\xrightarrow[\phi]{\cong} H^1_{\ell\text{-adic}}(W_{K},\dlt)
     \end{equation}
     Notice that $\dlt$ has the $\ell$-adic topology. This does not make a difference as mentioned in the introduction. 
     Having stated these correspondences we prove the following proposition which proves the commutativity of the second square in Theorem \ref{thm:main3}. 

     \begin{proposition}\label{propo:LLC}
     The following diagram commutes.
     \begin{center}
        \begin{tikzcd}
         \Hom_{\sm}( \T(\Ocal),\mql)\arrow[r,"\psi","\cong"'] & H^1_{\ell\text{-adic}}(\gal_{\K},\dlt)^{\Fr}\\
         \Hom_{\sm}(T(K),\mql)\arrow[r,"\phi","\cong"']\arrow[u,"\res"] & H^1_{\ell\text{-adic}}(W_{K},\dlt)\arrow[u,"\res"]
        \end{tikzcd}
     \end{center}
     Here $\phi$ is the local Langlands correspondence \eqref{eq:ql-LLC}, $\psi$ is the map in \eqref{eq:inertialLLC} and $\res$ is the restriction map.
     \end{proposition}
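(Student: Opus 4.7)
The plan is to reduce the commutativity of the square to the case of induced tori $T=\res_{K'/K}\G_m$, for $K'/K$ a finite separable extension, and then to unwind both $\phi$ and $\psi$ via Shapiro's lemma in order to apply the preceding comparison lemma (diagram~\ref{diagram:Serre reciprocity}) with $K$ replaced by $K'$.

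First I would observe that both $\phi$ and $\psi$, as well as the two vertical restriction maps in the square, are natural transformations between additive functors of the torus $T$. For $\phi$ this naturality is built into the classical local Langlands correspondence for tori (cf.~\cite{JKYu[2004]}); for $\psi$ it follows from Theorem~\ref{thm:inertial LLC} combined with the naturality of the sheaf--function correspondence of Theorem~\ref{thm:sh-fn}; for the two restrictions it is immediate from the functoriality of the connected N\'eron model and the fact that the $\gal_{\K}$-action on $\dlt$ is inherited from that of $W_K$. Moreover, as in the uniqueness arguments used in the proof of Theorem~\ref{thm:inertial LLC} (and in \cite{JKYu[2004]} for $\phi$), both $\phi$ and $\psi$ are uniquely characterized as natural transformations by their values on induced tori. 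Hence the two compositions $\psi\circ\res$ and $\res\circ\phi$ agree on all tori $T$ over $K$ if and only if they agree on induced tori $T=\res_{K'/K}\G_m$.

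For such an induced torus, $T(K)=K'^\times$ and $\mathcal{T}(\Ocal)=\Ocal_{K'}^\times$, and the left vertical arrow is the usual restriction of smooth characters. On the bottom row, Shapiro's lemma identifies $H^{1}_{\ql}(W_{K},\dlt)\cong H^{1}_{\ql}(W_{K'},\mql)$, under which $\phi$ becomes pullback along the classical reciprocity map $\re_{K'}:K'^\times\xto{\cong} W_{K'}^{\ab}$. On the top row, combining Theorem~\ref{thm:fundamentalgroup}(iii), Theorem~\ref{thm:inertial LLC}(iii), the sheaf--function isomorphism of Theorem~\ref{thm:sh-fn} applied to the multiplicative group over the maximal unramified completion of $K'$, together with a standard fixed-point argument accounting for the Frobenius-permutation of the components of $K'\otimes_K\K$, one identifies $H^{1}_{\ql}(\gal_{\K},\dlt)^{\Fr}$ with the $\Fr_{K'}$-invariant continuous $\ql$-character group of the absolute Galois group of that unramified completion; under this identification $\psi$ becomes pullback along Serre's reciprocity $\theta_{K'}^{-1}$ for $K'$ composed with the sheaf--function projection $\p_{K'}$, and the right vertical restriction corresponds to the restriction of characters dual to the inclusion of the absolute Galois group into the Weil group of $K'$.

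With all these translations in place, the commutativity of the square reduces, after taking Pontryagin duals, to exactly the commutativity of diagram~\ref{diagram:Serre reciprocity} with $K$ replaced by $K'$, which has already been established in the lemma immediately preceding this proposition. The main obstacle will be the careful bookkeeping of the various Shapiro-type identifications and their compatibility with the passage to $\Fr$-invariants, with the restriction maps, and with the sheaf--function correspondence; once the reduction to the $\G_m$-case over $K'$ is set up correctly, the final step is a direct appeal to the preceding lemma.
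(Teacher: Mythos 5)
Your proposal takes a genuinely different route from the paper. The paper proves the proposition directly for an arbitrary torus $T$ with splitting field $E/K$: it expands both $\psi$ and $\phi$ through their corestriction descriptions relative to $E$ (as in Theorem~\ref{thm:inertial LLC} on the top and the Langlands/Yu construction on the bottom), producing a ladder of three squares whose leftmost square is handled by the sheaf--function correspondence, whose rightmost square is a direct cocycle computation with coset representatives, and whose middle square is exactly Lemma~\ref{lemma:diag2commutativity} (itself a consequence of diagram~\ref{diagram:Serre reciprocity}). You instead reduce the whole statement to induced tori $\res_{K'/K}\G_m$ by a naturality argument, then unwind both rows via Shapiro's lemma and appeal to diagram~\ref{diagram:Serre reciprocity} for $K'$. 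Both routes ultimately rest on the same comparison lemma, so the core idea is shared; what differs is whether one works directly through the splitting field or first reduces to the induced case.

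Two points in your reduction step deserve more care than you give them. First, the cited uniqueness statements (Theorem~\ref{thm:inertial LLC} and \cite[\S 7.6]{JKYu[2004]}) assert that a natural \emph{isomorphism} with prescribed values on induced tori is unique; they do not directly apply to the composites $\psi\circ\res$ and $\res\circ\phi$, which are not isomorphisms. What you actually need is a propagation principle: if two natural transformations of additive contravariant functors agree on induced tori, they agree everywhere. For a torus $T$ split by $E$ this follows from the embedding $T\hookrightarrow\res_{E/K}(T_E)$ into a sum of induced tori together with the fact that restriction $\Hom_{\sm}\bigl(\res_{E/K}(T_E)(K),\mql\bigr)\to\Hom_{\sm}(T(K),\mql)$ is \emph{surjective} (divisibility of $\mql$ and extension of characters from a closed subgroup); without explicitly verifying this surjectivity the naturality argument does not close. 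Second, the identification of $H^1_{\ql}(\gal_{\K},\dlt)^{\Fr}$ for $T=\res_{K'/K}\G_m$ with the $\Fr_{K'}$-invariant character group of the inertia of $K'$ is a Mackey-decomposition-plus-Frobenius-permutation argument on the factors of $K'\otimes_K\K$; you flag this as ``bookkeeping'' but it is genuinely the technical heart of your version of the proof, and its compatibility with $\psi$ (i.e.\ with Theorem~\ref{thm:fundamentalgroup}(iii) and the sheaf--function correspondence over the unramified completion of $K'$) needs to be checked explicitly rather than asserted. Once both of these gaps are filled, your strategy should go through and is arguably the more ``standard'' Langlands-style argument; the paper's direct expansion through the splitting field avoids the surjectivity/exactness input at the cost of a longer diagram chase.
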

     We shall prove the proposition after proving the next lemma.\\
     We have the homomorphisms  $\p_E:\pi_1(\elp\G_m)\to \Ocal_{E}^{\times}\hookrightarrow E^{\times}$ (Lemma \ref{lemma:pi1to fqpts}) and $\gal_{\E}\hookrightarrow W_E$. These give us:
     \begin{equation}
        \begin{split}
            \Hom_{\ell\text{-adic}}(E^\times, \dlt) &\xrightarrow{\p_E}  \Hom_{\ell\text{-adic}}(\pi_1(\elp \mathbb{G}_m),\dlt)_{\Gamma}\\
            \Hom_{\ell\text{-adic}}(W_{E}^{\ab},\dlt)=\Hom_{\ell\text{-adic}}(W_E,\dlt)& \xrightarrow{\res} \Hom(\gal_{\E},\dlt)_{\Gamma}=\Hom_{\ell\text{-adic}}(\gal_{\E}^{\ab},\dlt)_{\Gamma}.
        \end{split}
     \end{equation}
     Abusing the notation, we continue to use $\p_E$ to denote the map induced by $\p_E$. We then have:
     \begin{equation}
     \begin{split}
          \Hom_{\ell\text{-adic}}(E^\times, \dlt)_{\wGamma} \xrightarrow{\nm_{\wGamma/\Gamma}}  \Hom_{\ell\text{-adic}}(E^\times, \dlt)_{\Gamma}\xrightarrow{\p_E} \Hom_{\ell\text{-adic}}(\pi_1(\elp \mathbb{G}_m),\dlt)_{\Gamma} \\
          \Hom_{\ell\text{-adic}}(W_{E}^{\ab},\dlt)_{\wGamma}  \xrightarrow{\nm_{\wGamma/\Gamma}} \Hom_{\ell\text{-adic}}(W_{E}^{\ab},\dlt)_{\Gamma}\xrightarrow{\res}\Hom_{\ell\text{-adic}}(\gal_{\E}^{\ab},\dlt)_{\Gamma}.
     \end{split}
     \end{equation}
     We denote the above maps by $\alpha$ and $\beta$ respectively, i.e.
     \begin{equation}\label{eq:defnalpha}
       \begin{split}
          \alpha := & \nm_{\wGamma/\Gamma}\circ\p_E\\
        \beta := & \nm_{\wGamma/\Gamma}\circ\res.
       \end{split}
     \end{equation}
     \begin{lemma}\label{lemma:diag2commutativity}
     The homomorphisms $\alpha$ and $\beta$ defined in \eqref{eq:defnalpha} make the following diagram commute. 
     \begin{equation}\label{Diagram2}
        \begin{tikzcd}
         (\Hom_{\ell\text{-adic}}(\pi_1(\elp \mathbb{G}_m),\dlt)_{\Gamma})^{\Fr}\arrow[r,"\theta_{\E}^{-1}"]& (\Hom_{\ell\text{-adic}}(\gal_{\E}^{\ab},\dlt)_{\Gamma})^{\Fr}\\
          \Hom_{\ell\text{-adic}}(E^\times, \dlt)_{\wGamma} \arrow[r,"\re_{E}^{-1}"]\arrow[u,"\alpha"] & \Hom_{\ell\text{-adic}}(W_{E}^{\ab},\dlt)_{\wGamma}\arrow[u,"\beta"].
        \end{tikzcd}
     \end{equation}
     
     Here $\re_{E}^{-1}$ denotes the map induced by the inverse of the reciprocity map for the field $E$. The map $\theta_{\E}$ is the map \ref{th} for the field $\E$. 
    \end{lemma}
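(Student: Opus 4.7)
The plan is to deduce the commutativity of \eqref{Diagram2} from the $E$-analog of the diagram \eqref{diagram:Serre reciprocity}. Since $\E = E \cdot \K$ is unramified over $E$ and has algebraically closed residue field $\Fqcl$, the field $\E$ is the completion of the maximal unramified extension of $E$. Hence the construction in the lemma immediately preceding \eqref{diagram:Serre reciprocity}, applied with $E$ in place of $K$, yields the identity
\[
\re_E \circ \p_E = \iota \circ \theta_\E
\]
as maps $\pi_1(\elp\G_m) \to W_E^{\ab}$, where $\iota \colon \gal_\E^{\ab} \hookrightarrow W_E^{\ab}$ is induced by the inclusion $\gal_\E \subset W_E$.

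Next I would dualize this identity by applying $\Hom_\ql(\cdot,\dlt)$. For $h \in \Hom_\ql(E^\times,\dlt)$ and $\sigma \in \gal_\E^{\ab}$,
\[
\theta_\E^{-1}\bigl(\p_E^*(h)\bigr)(\sigma) = h\bigl(\p_E(\theta_\E^{-1}(\sigma))\bigr) = h\bigl(\re_E^{-1}(\iota(\sigma))\bigr) = \res\bigl(\re_E^{-1,*}(h)\bigr)(\sigma),
\]
which establishes the pre-coinvariant identity $\theta_\E^{-1} \circ \p_E^* = \res \circ \re_E^{-1,*}$ as homomorphisms $\Hom_\ql(E^\times,\dlt) \to \Hom_\ql(\gal_\E^{\ab},\dlt)$.

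Finally I would pass to coinvariants in order to recover the maps $\alpha$ and $\beta$ from \eqref{eq:defnalpha}. The group $\wGamma = \gal(E/K)$ acts naturally on $E^\times$, $W_E^{\ab}$, $\pi_1(\elp\G_m)$, and $\gal_\E^{\ab}$, and each of the maps $\p_E$, $\theta_\E$, $\re_E$, and $\iota$ is $\wGamma$-equivariant (the $\wGamma/\Gamma$-action being the residual Frobenius action). Consequently the dualized maps $\p_E^*$, $\theta_\E^{-1}$, $\re_E^{-1,*}$ and $\res$ all commute with the norm $\nm_{\wGamma/\Gamma}$. Combining this equivariance with the pre-coinvariant identity yields $\theta_\E^{-1} \circ \alpha = \beta \circ \re_E^{-1}$, i.e.\ the commutativity of \eqref{Diagram2}.

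The main conceptual input is the group-level compatibility $\re_E \circ \p_E = \iota \circ \theta_\E$ over the local field $E$, which is a verbatim transcription of the proof of \eqref{diagram:Serre reciprocity} with $E$ replacing $K$; once this is granted, the lemma reduces to naturality of dualization and the norm. The only point that requires genuine care is the $\wGamma$-equivariance bookkeeping, so that all the operations commute with $\nm_{\wGamma/\Gamma}$; this follows from the functoriality of Serre's $\theta_\E$ and the reciprocity map $\re_E$ in the local field $E$.
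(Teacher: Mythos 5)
Your proposal is correct and takes essentially the same approach as the paper: reduce to the $E$-analogue of diagram \eqref{diagram:Serre reciprocity} (Serre's compatibility between $\theta$ and the reciprocity map), dualize, and pass to coinvariants. The paper's own proof is terser — it simply cites \eqref{diagram:Serre reciprocity} for the field $E$ and the identity $\delta=\theta^{-1}$ — whereas you spell out the pre-coinvariant identity $\theta_\E^{-1}\circ\p_E^* = \res\circ\re_E^{-1,*}$ and the $\wGamma$-equivariance bookkeeping needed to commute everything with $\nm_{\wGamma/\Gamma}$, which is exactly the content left implicit in the paper.
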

    \begin{proof}
     The definition of the homomorphisms $\alpha$ and $\beta$ gives that, $$\im(\alpha)\subset(\Hom_{\ell\text{-adic}}(\pi_1(\elp \mathbb{G}_m),\dlt)_{\Gamma})^{\Fr}$$ and $$\im(\beta)\subset(\Hom_{\ell\text{-adic}}(\gal_{\E}^{\ab},\dlt)_{\Gamma})^{\Fr}.$$
      The commutativity of this diagram then follows by commutativity of the diagram \eqref{diagram:Serre reciprocity} for the field $E$ and the fact that $\delta$ is the inverse of the map $\theta$ constructed by Serre.
     \end{proof}    
     \noindent We now prove the proposition \ref{propo:LLC},  
     \begin{proof}[Proof of Proposition \ref{propo:LLC}]
     We have the following diagram:
     \begin{equation*}
    \scriptstyle
      \begin{tikzcd}[column sep=small]
       \Hom_{\ell\text{-adic}}( \T(\Ocal),\mql)\arrow[r,"\gamma"'] & (\Hom_{\ell\text{-adic}}(\pi_1(\elp\mathbb{G}_m),\dlt)_\Gamma)^{\Fr}\arrow[r,"\scriptscriptstyle \theta_{E}^{-1}"']& (\Hom_{\ell\text{-adic}}(\gal_{\E}^{\ab},\dlt)_{\Gamma})^{\Fr} \arrow[r,"\scriptscriptstyle \cor_{\Gamma}"']& H^{1}(\gal_{\K},\dlt)^{\Fr}\\
       \Hom_{\ell\text{-adic}}(T(K),\mql)\arrow[r]\arrow[u,"\res"] & \Hom_{\ell\text{-adic}}(E^\times, \dlt)_{\widetilde{\Gamma}}\arrow[u,"\alpha"]\arrow[r,"\re_{E}^{-1}"]\arrow[u,"\alpha"] & \Hom_{\ell\text{-adic}}(W_{E}^{\ab},\dlt)_{\wGamma}\arrow[u,"\beta"]\arrow[r,"\scriptscriptstyle \cor_{\wGamma}"]\arrow[u,"\beta"] & H^1(W_{E/K},\dlt)\arrow[u].
      \end{tikzcd}
     \end{equation*}
     \textbf{Step 1:} Consider the left-most square in the diagram. Since the residue field of $K'$ is $\mathbb{F}_{q^n}$, $\lp{T}(\mathbb{F}_{q^n})=\T(\Ocal_{K'})$. Define $\gamma$ as the following composition:
        \begin{equation}
        \begin{split}
        \Hom_{\ell\text{-adic}}( \T(\Ocal),\mql) \xrightarrow{\widehat{\nm}} \Hom_{\ell\text{-adic}}(\T(\Ocal_{K'}),\mql)^{\Fr} \to (\Hom_{\ell\text{-adic}}(\pi_1(\lp T),\mql)^{\Fr_{K'}})^{\Fr}=\\
         \Hom_{\ell\text{-adic}}(\pi_1(\elp T)^{\Gamma},\mql)^{\Fr} \to (\Hom_{\ell\text{-adic}}(\pi_1(\elp T), \mql)_{\Gamma}) ^{\Fr} \to (\Hom_{\ell\text{-adic}}(\pi_1(\elp\mathbb{G}_m) ,\dlt)_{\Gamma}) ^{\Fr}.
     \end{split}
     \end{equation}
     The first and second map is obtained using Theorem \ref{thm:sh-fn}. The  third map is obtained using Theorem \ref{thm:main1}. \\
     It can be seen using Theorem \ref{thm:sh-fn} that the composition of all maps in the top row of the above diagram is $\psi$ (\ref{eq:inertialLLC}). \\
     Recall that $\alpha=\nm_{\wGamma/\Gamma}\circ\p_E$  \eqref{eq:defnalpha}. 
     The map in Theorem \ref{thm:sh-fn} is obtained by pulling back along $\p_E$, hence it follows that this square commutes. 

     \noindent
     \textbf{Step 2:} We now consider the rightmost square. The inclusion map $\gal_{\K}\hookrightarrow W_K$ induces a map $\mathscr{G}_{\E/\K}\to W_{E/K}$. This gives us a map $H^1(W_{E/K},\dlt)\to H^1(\mathscr{G}_{\E/\K},\dlt).$ The right-most square can be expanded as
     \begin{equation}\label{diag:square2expanded}
       \begin{tikzcd}
      {(\Hom_{\ell\text{-adic}}(\gal_{\E}^{\ab},\dlt)_{\Gamma})}^{\Fr}\arrow[r,"\cor_{\Gamma}"] & H^1_{\ell\text{-adic}}(\mathscr{G}_{\E/\K},\dlt)^{\Fr}\arrow[r,"\sim"] & H^1_{\ell\text{-adic}}(\gal_{\K},\dlt)^{\Fr}\\
      \Hom_{\ell\text{-adic}}(W_{E}^{\ab},\dlt)_{\wGamma}\arrow[r,"\cor_{\wGamma}"]\arrow[u,"\beta"] & 
       H^1_{\ell\text{-adic}}(W_{E/K},\dlt)\arrow[u] \arrow[r,"\sim"] & 
       H^1_{\ell\text{-adic}}(W_{K},\dlt)\arrow[u,"\res"].
       \end{tikzcd}
     \end{equation}
      Here $\cor_{\Gamma}$ and $\cor_{\wGamma}$ denote the corestriction map for $\Gamma$ and $\wGamma$ respectively. Recall that $\beta$ is as defined in \eqref{eq:defnalpha}.\\
      The group \(\Gamma\) is isomorphic to the quotient \(\mathscr{G}_{\E/\K}/\gal_{\E}^{\ab}\) and the group \(\wGamma\) is isomorphic to the quotient \(W_{E/K}/ W_{E}^{\ab}\).
      If $\{g_{\gamma}|\gamma\in\Gamma \}$ are the coset representatives for \(\gal_{\E}^{\ab}\) in $\mathscr{G}_{\E/\K}$, then $\{\Fr^i.g_{\gamma}|\gamma\in\Gamma, 0\leq i \leq n \}$  are the coset representatives for $W_{E}^{\ab}$ in \(W_{E/K}\). This shows that the first square in the diagram \eqref{diag:square2expanded} above is commutative and hence the diagram \eqref{diag:square2expanded} is commutative. 
     This shows that the rightmost square commutes.\\
     The commutativity of the middle square follows from the Lemma \ref{lemma:diag2commutativity}. 
     This proves the proposition. 
     \end{proof}
    
      \subsubsection{On the Kottwtiz homomorphism for tori}\label{sec:kottwitz}
     Let $T$, $K$ and $\Ocal$ be as in the previous section. Let $E/K$ be a finite Galois extension where $T$ splits.  We recall the definition of the Kottwitz homomorphism as given in \cite{Kottwitz[1997]}. Let $\E$ denote the completion of maximal unramified extension of $E$. We have the valuation  map $\E^{\times}\to\Z$ which gives ,
      ${\E}^{\times}\otimes\cchg(T)\to\cchg(T)$. Observe that this map is $\Gamma=\gal(\E/\K)$-equivariant. Using ${\E}^{\times}\otimes\cchg(T)=T(\E)$ we get a map
      $$T(\E)_{\Gamma}\to\cchg(T)_{\Gamma}. $$
     It can be shown that $T(\E)_{\Gamma}\cong T(\K)$ (\cite[\S 11.1]{KP23}). Passing to the $\Fr$-invariants we have a homomorphism 
     $$\kappa:T(K)\to(\cchg(T)_{\Gamma})^{\Fr}.$$ This is called the Kottwitz homomorphism. It is continuous, surjective, with kernel $\T(\Ocal)$ where $\T$ is the connected N\'eron model of $T$. It can also be shown that its construction is independent of the choice of $E$ (\cite[\S 11.1]{KP23}). The Kottwitz homomorphism along with Lang's theorem gives us the following short exact sequence:
      \begin{equation}
          0\to \ \T(\Ocal)\to T(K)\xrightarrow[]{\kappa} (\cchg(T)_{\Gamma})^{\Fr}\to 0.
      \end{equation}
      We now apply the functor $\Hom_{\ell\text{-adic}}(\cdot ,\mql)$ to the above short exact sequence. $T(K)$ is a locally profinite group. $\T(\Ocal)$ is a subgroup of $T(K)$ with $T(K)/\T(\Ocal)$ discrete. Thus any character of $\T(\Ocal)$ can be extended to $T(K).$ Also, a character is continuous on $T(K)$ iff it is continuous on $\T(\Ocal)$. Hence the following sequence is exact: 
      \begin{equation}\label{eq:Kott dual}
          0\to\Hom_{\ell\text{-adic}}((\cchg(T)_{\Gamma})^{\Fr},\mql)\to\Hom_{\ell\text{-adic}}(T(K),\mql)\to \Hom_{\ell\text{-adic}}(\ \T(\Ocal),\mql)\to 0.
      \end{equation}
     We have the short exact sequence, 
     \begin{equation}
            0\to\gal_{\K}\to W_K \to \Z \to 0.
     \end{equation}
     Consider the inflation-restriction sequence obtained from above sequence:
     \begin{equation}\label{eq:inf-res}
            0\to H^1(\Z,\dlt^{\gal_{\K}})\to H^1_{\ell\text{-adic}}(W_K,\dlt)\to H^1_{\ell\text{-adic}}(\gal_{\K},\dlt)^{\Fr}.
     \end{equation}
     \begin{lemma}\label{lemma:shortexactiden}
     There is a canonical identification: $$\Hom_{\ell\text{-adic}}((\cchg(T)_{\gal_{\K}})^{\Fr},\mql)\xrightarrow[]{\cong}  H^1(\Z,\dlt^{\gal_{\K}}).$$
     \end{lemma}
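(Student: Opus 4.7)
The plan is to factor the asserted isomorphism as a composition of three elementary identifications: an adjunction, the standard cohomology of $\Z$, and an exactness property of $\Hom(-,\mql)$.

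First, I would reinterpret $\dlt^{\gal_\K}$ as a Hom-group. Since $\cchg(T)$ is finitely generated and free with $\chg(T) = \Hom(\cchg(T), \Z)$, there is a canonical $\gal_\K$-equivariant identification
\[
\dlt = \chg(T) \otimes_\Z \mql \cong \Hom(\cchg(T), \mql),
\]
where $\gal_\K$ acts through its finite quotient $\gal(\E/\K)$ on $\cchg(T)$ and trivially on $\mql$. Taking $\gal_\K$-invariants and invoking the adjunction between coinvariants and the inclusion of trivial $\gal_\K$-modules yields
\[
\dlt^{\gal_\K} = \Hom_{\gal_\K}(\cchg(T), \mql) = \Hom(\cchg(T)_{\gal_\K}, \mql).
\]

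Next, I would compute the target $H^1(\Z, \dlt^{\gal_\K})$. Using the standard length-one free resolution $0 \to \Z[\Z] \xrightarrow{t-1} \Z[\Z] \to \Z \to 0$, for any $\Z$-module $N$ one has $H^1(\Z, N) = N_\Fr = N/(\Fr-1)N$; the discreteness of $\Z$ makes cocycle continuity automatic. Writing $M := \cchg(T)_{\gal_\K}$, this gives $H^1(\Z, \dlt^{\gal_\K}) = \Hom(M, \mql)_\Fr$. On the other hand, the four-term exact sequence
\[
0 \to M^\Fr \to M \xrightarrow{\Fr-1} M \to M_\Fr \to 0
\]
remains exact after applying $\Hom(-,\mql)$, because $\mql$ is a divisible and hence injective $\Z$-module. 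Reading off the cokernel of the middle map, together with the observation that the image of $(\Fr-1)^*$ on $\Hom(M,\mql)$ is precisely $\Hom(M/M^\Fr, \mql)$, delivers
\[
\Hom(M, \mql)_\Fr \cong \Hom(M^\Fr, \mql).
\]

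Finally, since $M^\Fr$ is a finitely generated (hence discrete) abelian group, every homomorphism $M^\Fr \to \mql$ is automatically $\ell$-adically continuous, so the right-hand side above coincides with $\Hom_{\ql}((\cchg(T)_{\gal_\K})^\Fr, \mql)$. Composing the three canonical identifications produces the desired isomorphism. I expect the main obstacle to be purely bookkeeping: one must carefully verify that the operator $(\Fr-1)^*$ on $\Hom(M,\mql)$ arising from dualizing $\Fr-1$ on $M$ matches, up to the standard sign/inverse convention for $\Z$-actions on dual modules, the operator whose cokernel computes $H^1(\Z,\Hom(M,\mql))$. Beyond this, no deeper input than the injectivity of $\mql$ and the cohomological dimension of $\Z$ is needed.
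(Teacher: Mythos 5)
Your proof is correct and follows essentially the same route as the paper's: you establish the identification $\dlt^{\gal_\K}\cong\Hom(\cchg(T)_{\gal_\K},\mql)$ via the coinvariants adjunction, compute $H^1(\Z,-)$ as $\Fr$-coinvariants, and then use the injectivity of $\mql$ to dualize the exact sequence defining $M^{\Fr}$ and $M_{\Fr}$. The paper runs the same chain of identifications (in the opposite direction, starting from the left-hand side) and is somewhat terser; your version merely makes the role of the injective $\Z$-module $\mql$ and the resolution of $\Z$ more explicit, and correctly flags the sign/automorphism bookkeeping needed to match $(\Fr-1)^*$ with the coboundary operator.
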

     \begin{proof}
     Since $(\cchg(T)_{\gal_{\K}})^{\Fr}$ is a discrete group, $\Hom_{\ell\text{-adic}}((\cchg(T)_{\gal_{\K}})^{\Fr},\mql)=\Hom((\cchg(T)_{\gal_{\K}})^{\Fr},\mql)$. Thus:
     \begin{equation}\label{eq:tailcalculation}
        \begin{split}
         \Hom_{\ell\text{-adic}}((\cchg(T)_{\gal_{\K}})^{\Fr},\mql) &= \Hom(\cchg(T)_{\gal_{\K}},\mql)_{\Fr}\\
          &=\Hom(\cchg(T),\mql)^{\gal_{\K}}/(\Fr-id)\cdot\Hom(\cchg(T),\mql)^{\gal_{\K}} \\
          & = \Hom(\Z,\dlt)^{\gal_{\K}}/(\Fr-id)\cdot\Hom(\Z,\dlt)^{\gal_{\K}}\\
          & = \dlt^{\gal_{\K}}/(\Fr-id)\cdot\dlt^{\gal_{\K}} = H^1(\Z,\dlt^{\gal_{\K}}).
        \end{split}
     \end{equation}
     \end{proof}
     \noindent We now complete the proof of Theorem \ref{thm:main3}
    
     \begin{proof}[Proof of Theorem \ref{thm:main3}]
    Using Proposition \ref{propo:LLC}, it follows that the second square in the diagram in the statement of Theorem \ref{thm:main3} commutes.
     We now show that the first square commutes. The left-most isomorphism is the one constructed in Lemma \ref{lemma:shortexactiden}. We have the short exact sequence:
     $$1\to W_{E}^{\ab}\to W_{E/K}\to \wGamma\to 1.$$ Hence we write 
     $W_{E/K}=\bigcup_{\gamma\in\wGamma} W_{E}^{\ab} w_\gamma $, where $w_\gamma$ are right/left coset representatives of $W_{E}^{\ab}$ in $W_{E/K}$.
     \newline The isomorphism $\phi$ is the following composition: $$\Hom_{\ell\text{-adic}}(T(K),\mql)\to\Hom_{\ell\text{-adic}}(E^{\times},\dlt)_{\wGamma}\xrightarrow[]{\re_{E}^{-1}}\Hom_{\ell\text{-adic}}(W_{E}^{\ab},\dlt)_{\wGamma}\xrightarrow[]{\cor_{\wGamma}}\Hom_{\ell\text{-adic}}(W_{E/K},\dlt).$$
     Consider the following composition,
      $$\Hom_{\ell\text{-adic}}(E^{\times},\dlt)_{\wGamma}\xrightarrow[]{\re_{E}^{-1}}\Hom_{\ell\text{-adic}}(W_{E}^{\ab},\dlt)_{\wGamma}\xrightarrow[]{\cor_{\wGamma}}\Hom_{\ell\text{-adic}}(W_{E/K},\dlt).$$ 
      For any $f\in\Hom_{\ell\text{-adic}}(E^{\times},\dlt)_{\wGamma}$ and $w\in W_{E/K}$, $$\cor_{\wGamma}(r_E^{-1}(f))(w)=\sum_{\gamma\in\wGamma}w_{\gamma}f(r_{E}^{-1}(w_{\gamma}^{-1}w w_{\gamma'}))$$ where $\gamma'$ is such that $w_{\gamma}^{-1}w w_{\gamma'}\in W_{E}^{\ab}$. 
      If the action of $\wGamma$ on $\im(f)$ is trivial,
      \begin{equation*}
         \begin{split}
           \cor_{\wGamma}(r_E^{-1}(f))(w)= & \sum_{\gamma\in\wGamma}w_{\gamma}f(r_{E}^{-1}(w_{\gamma}^{-1}w w_{\gamma'}))\\
             &=f(r_{E}^{-1}( \prod_{\gamma\in\wGamma}w_{\gamma}^{-1}w w_{\gamma'}))\\
             &=f(r_{E}^{-1}\tran(w))\\
             &=f(r_{K}^{-1}(w)).
         \end{split}
      \end{equation*}
    The last step follows from the commutativity of diagram: 
     \begin{center}
     \begin{tikzcd}
     E^{\times}\arrow[r,"r_{E}"]& W_{E}^{\ab}\\
     K^{\times}\arrow[r,"r_{K}"]\arrow[u,hook]& W_{K}^{\ab}\arrow[u,"\tran"]
     \end{tikzcd}
     \end{center}
     Let $\widehat{\kappa}$ denote the map induced on the $\ell$-adic dual by the Kottwitz homomorphism. 
     It now only remains to show that for $f\in\Hom_{\ell\text{-adic}}((\cchg(T)_{\Gamma})^{\Fr},\mql)=\Hom_{\ell\text{-adic}}((\cchg(T)_{\gal_{\K}})^{\Fr},\mql)$, $\wGamma$ acts trivially on $\im(\widehat{\kappa}(f))$.
    Starting with an element $f\in\Hom_{\ell\text{-adic}}((\cchg(T)_{\Gamma})^{\Fr},\mql)\cong \Hom_{\ell\text{-adic}}(\Z,\dlt^{\Gamma})_{\Fr}$ we compute its image in  $\Hom_{\ell\text{-adic}}(T(K),\mql)\cong\Hom_{\ell\text{-adic}}({(E^{\times}})^{\wGamma},\dlt)$ explicitly.  We identify $\dlt$ with $\Hom_{\ell\text{-adic}}(\cchg(T),\mql)$ then for $e\in {(E^{\times})}^{\wGamma}$ and $\lambda\in \cchg(T)$, $$\widehat{\kappa}(f)(e)(\lambda)=f(\lambda^{\val(e)})$$ where $\val$ is the valuation on the field $K$. Observe that then $\im(f)$ has trivial action of $\wGamma$.
    This shows that the first diagram commutes. Thus the following diagram is commutative.
    \begin{center}
     \begin{tikzcd}
     0\arrow[r]&\Hom_{\ell\text{-adic}}((\cchg(T)_{\gal_{\K}})^{\Fr},\mql)\arrow[r,"\widehat{\kappa}"]\arrow[d,"\cong"]&\Hom_{\sm}(T(K),\mql)\arrow[r,"\res"]\arrow[d,"\phi"] & \Hom_{\sm}(\ \T(\Ocal),\mql)\arrow[r]\arrow[d,"\psi"] & 0\\
     0\arrow[r]& H^{1}(\Z,(\dlt)^{\gal_{\K}})\arrow[r] & H^1_{\ell\text{-adic}}(W_K,\dlt)\arrow[r,"\res"]& H^1_{\ell\text{-adic}}(\gal_{\K},\dlt)^{\Fr}\arrow[r] &  0.
     \end{tikzcd}
      \end{center}
 \end{proof}

    \appendix
    
    \section[l-adic Pontryagin duality for abelian profinite groups with Noetherian l- primary part]{$\ell$-adic Pontryagin duality for abelian profinite groups with Noetherian $\ell$- primary part}\label{sec:ladicduality}
    Let $\mathscr{P}_{0}$ denote the category of abelian profinite groups. This category can also be described as the category of zero dimensional commutative pro-algebraic groups. It is an abelian category. Any $A\in \mathscr{P}_{0}$, can be decomposed uniquely as $A=\prod_{p} A_p$, where the product is taken over all prime numbers $p$ and $A_p$ denotes the $p$-primary part of $A$, i.e. all  finite quotients of $A_p$ have order a power of $p$. Note that the $p$-primary part $A_p$ is naturally a $\Z_p$-module. Also, any homomorphism  $A\to B$, in $\mathscr{P}_{0}$ can be decomposed into a product of homomorphisms between the corresponding $p$-primary components (see \cite[\S 4]{Serre1960}). Let $\mathcal{A}b$ denote the category of abelian groups. We have a contravariant  functor to the category of abelian groups, i.e. a functor $$\mathscr{P}_{0}\to\mathcal{A}b^{\opp}$$  $$A\mapsto \Hom(A,\mql),$$ where $\ell$ is a fixed prime number. Since $\mql$ is a divisible group, this functor is exact.
    \par Let ${\mathscr{P}_{0}}_{\ell}$ denote the full subcategory of $\mathscr{P}_{0}$ formed by the abelian profinite groups whose $\ell$-primary part is Noetherian as a $\Z_{\ell}$-module. 
    \par As before $\Hom_{\ell\text{-adic}}(A,\mql)$ consists of the homomorphisms which are continuous for the $\ell$-adic topology on $\mql$. It follows from Lemma \ref{lemma:profinitecon} for any such homomorphism $\rho$,
    $\im\rho \subseteq F$ where $F$ is a finite extension of ${\mathbb{Q}}_{\ell}$. We then have the following
     
    \begin{proposition}\label{cexact}
	The functor $${\mathscr{P}_{0}}_{\ell} \to \mathcal{A}b^{\opp}$$ $$A\mapsto \Hom_{\ell\text{-adic}}(A,\mql)$$ is exact.
	\end{proposition}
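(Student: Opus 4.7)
The plan is to exploit the canonical decomposition $A = A_{\ell} \oplus \prod_{l \neq \ell} A_{l}$ of any abelian profinite group and to handle the two summands by separate arguments. Since morphisms in $\mathscr{P}_{0}$ respect this decomposition into primary parts, a short exact sequence in ${\mathscr{P}_{0}}_{\ell}$ breaks up into a direct sum of short exact sequences on the primary components, and $\Hom_{\ql}(-,\mql)$ splits accordingly as a direct product. Exactness of the functor therefore reduces to establishing it separately on the $\ell$-primary part and on the prime-to-$\ell$ part. Left exactness is formal in both cases, so the real content is the surjectivity of the restriction map $\Hom_{\ql}(B,\mql) \to \Hom_{\ql}(A,\mql)$ in each case.

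For the prime-to-$\ell$ part I would show that every continuous character $\rho : \prod_{l \neq \ell} A_{l} \to \mql$ is in fact smooth. By Remark \ref{rmk:profinitecon} its image lies in $\mathcal{O}_{F}^{\times}$ for some finite extension $F/\Q_{\ell}$, and for $l \neq \ell$ the pro-$l$ part of $\mathcal{O}_{F}^{\times}$ is a finite subgroup of $\mu_{q-1}(F)$. Moreover any nontrivial root of unity $\zeta$ of order coprime to $\ell$ satisfies $|\zeta - 1|_{\ell} = 1$, so a sufficiently small $\ell$-adic neighborhood of $1$ meets the set of such $\zeta$ only at $1$. Combined with the continuity of $\rho$ on the profinite product, this forces $\rho$ to vanish on $A_{l}$ for all but finitely many $l$ and to have finite image on the rest. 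Hence $\ql$-continuous characters here agree with smooth characters into the prime-to-$\ell$ roots of unity, and exactness follows from the classical Pontryagin duality for (pro-non-$\ell$) abelian profinite groups.

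For the $\ell$-primary part, the Noetherian hypothesis together with the closure of finitely generated $\Z_{\ell}$-modules under extensions ensures that $A_{\ell}, B_{\ell}, C_{\ell}$ are all finitely generated $\Z_{\ell}$-modules. Any continuous $\rho : A_{\ell} \to \mql$ takes values in the pro-$\ell$ part $\mathcal{M} := \mu_{\ell^{\infty}} \cdot (1 + \mathfrak{m}_{\overline{\Z}_{\ell}})$ of $\mql$, and by density of $\Z$ in $\Z_{\ell}$ it is automatically $\Z_{\ell}$-linear. The crucial structural fact is that $\mathcal{M}$ is a divisible, hence injective, $\Z_{\ell}$-module: $\mu_{\ell^{\infty}}$ is visibly $\ell$-divisible, and for any $v \in 1 + \mathfrak{m}_{\overline{\Z}_{\ell}}$ the polynomial $T^{\ell} - v$ admits a root in $1 + \mathfrak{m}_{\overline{\Z}_{\ell}}$ (either by a Newton polygon computation or via the $\ell$-adic logarithm after enlarging the base). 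Consequently $\rho$ extends $\Z_{\ell}$-linearly to some $\tilde{\rho} : B_{\ell} \to \mathcal{M}$; the image lies in the pro-$\ell$ part of $\mathcal{O}_{F'}^{\times}$ for some finite $F'/\Q_{\ell}$ by finite generation of $B_{\ell}$, and $\tilde{\rho}$ is then automatically continuous, because every $\Z_{\ell}$-linear map between finitely generated $\Z_{\ell}$-modules with their canonical topologies is continuous. I expect the main technical point to be the $\ell$-divisibility of $1 + \mathfrak{m}_{\overline{\Z}_{\ell}}$, which genuinely requires passage to the algebraic closure; once this is secured, the rest is a combination of injectivity for divisible $\Z_{\ell}$-modules and the automatic continuity of $\Z_{\ell}$-linear maps between finitely generated modules.
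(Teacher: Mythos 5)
Your proof is correct, and it takes a genuinely different route from the paper for the $\ell$-primary part. The paper's argument there is a hands-on element-by-element extension: pick $b\in B_\ell$, observe $\langle b\rangle\cap A_\ell$ is either finite or of the form $b^n\Z_\ell$, extend by choosing an appropriate $\ell$-power root in a finite extension $F'/F$, and repeat, with the Noetherian hypothesis guaranteeing termination after finitely many such steps. You instead invoke a structural fact once and for all: the pro-$\ell$ part $1+\mathfrak{m}_{\overline{\Z}_\ell}$ of $\overline{\Z}_\ell^\times$ is a divisible, hence injective, $\Z_\ell$-module (your $\mathcal{M}=\mu_{\ell^\infty}\cdot(1+\mathfrak{m}_{\overline{\Z}_\ell})$ is in fact just $1+\mathfrak{m}_{\overline{\Z}_\ell}$), so an abstract $\Z_\ell$-linear extension $\tilde\rho$ exists, and you then recover continuity a posteriori from finite generation plus automatic continuity of $\Z_\ell$-linear maps between finitely generated $\Z_\ell$-modules. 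This is cleaner and makes the role of divisibility explicit, though it leans on two points you should spell out if you were to write it up fully: that the $\Z_\ell$-linearity of the original $\rho$ is forced by density of $\Z$ in $\Z_\ell$ and continuity of the $\Z_\ell$-action on $\mathcal{O}_F^\times$, and that the profinite topology on the Noetherian $\Z_\ell$-module $B_\ell$ coincides with its $\ell$-adic topology. Your $\ell$-divisibility claim for $1+\mathfrak{m}_{\overline{\Z}_\ell}$ is fine, but the cleanest verification is just the residue-field argument: an $\ell$-th root $u$ of $v\in1+\mathfrak{m}_{\overline{\Z}_\ell}$ exists in $\overline{\Q}_\ell$ by algebraic closedness, is a unit since $|u|^\ell=|v|=1$, and reduces to $\bar u=1$ in $\overline{\F}_\ell$ because $\bar u^\ell=1$ there forces $\bar u=1$ in characteristic $\ell$; the Newton polygon or $\exp$/$\log$ route is more work than needed. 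For the prime-to-$\ell$ part your argument and the paper's are essentially the same observation (the prime-to-$\ell$ part of $\mathcal{O}_F^\times$ is the finite group $\mu_{q_F-1}$, so the character is smooth), though you take a small detour through the discreteness of roots of unity near $1$ rather than just noting directly that a continuous homomorphism from a pro-(prime-to-$\ell$) group to $\mathcal{O}_F^\times$ must land in the pro-(prime-to-$\ell$) part, which is finite.
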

	\begin{proof} 
	Let $A\subseteq B$ be objects in ${\mathscr{P}_{0}}_{\ell}$. Given a continuous homomorphism $f:A\to \mql$, we must show that we can extend it to a homomorphism $B\to\mql$. We decompose \(A=\prod_{p\neq \ell} A_{p}\times A_{\ell} \subseteq \prod_{p\neq \ell} B_{p}\times B_{\ell}=B\) into their primary parts. 
    
    Since $A$ is compact $\im(f)\subseteq\Ocal_{F}^{\times}$, where $F$ is a finite extension of $\mathbb{Q}_\ell$ by Lemma \ref{lemma:profinitecon}. Since the co-prime to $\ell$ part of \(\Ocal_{F}^{\times}\) is finite, \(f(\prod_{p\neq \ell}A_{p})\) is finite. Hence $f(A_p)$ must be trivial for all but finitely many primes $p$. For all of these primes, we extend $f|_{A_p}$ to the trivial character of $B_p$. 
    
    This leaves the prime $\ell$, and a finite set $S$ of primes not equal to $\ell$. Since $f(A_p)$ is finite for each $p\in S$, $f_{A_p}$ is in fact a smooth character, which can be extended to a smooth character $B_p\to \mql$ by the exactness of the usual Pontryagin duality for pro-finite groups.  
    
    
    Finally, consider the prime $\ell$. Let $b\in B_{\ell}$ which is a finitely generated $\Z_{\ell}$-module.  Then the $\Z_{\ell}$-submodule $\Z_{\ell}\cdot b  \cap A_{\ell}\subset \Z_{\ell}\cdot b$ is generated by a single element of the form $\ell^nb\in A_{\ell}$. Hence we may extend the character $f|_{\Z_{\ell}\cdot b  \cap A_{\ell}}:\Z_{\ell}\cdot\ell^nb\to \mql$ to all of $\Z_{\ell}\cdot b$. Thus we can extend the character $f:A_{\ell}\to \mql$ to $A_{\ell}+\Z_{\ell}\cdot b$. Using the Noetherian hypothesis, we can extend the character to $B_\ell$. 
    
    Combining the above three steps, we can extend $f:A\to \mql$ to all of $B$. Observe that the character obtained is continuous and the image is contained in a finite extension of $\Q_{\ell}$. 
	\end{proof}

   \section{Lang's Theorem for connected  pro-algebraic groups}\label{sec:langproalg}
Let $k$ be the algebraic closure of a finite field $\Fq$. Let $G$ be a connected (but not necessarily commutative) pro-algebraic group over $k$ equipped with an $\Fq$-structure given by $\Fr:G\to G$. Namely, suppose that $G=\varprojlim\limits_{i\in I} G/H_i$ over a directed set $I$, where each $H_i$ is a $\Fr$-stable normal group subscheme of $G$ such that the quotient $G/H_i$ exists and is a connected quasi-algebraic group. Throughout the paper, we have used the following Lang's Theorem for connected pro-algebraic groups. We include a quick proof of this well known result for completeness.
\begin{theorem}\label{thm:langprofin}
    Let $G$ be a connected pro-algebraic group over $k$ equipped with an $\Fq$-Frobenius $\Fr:G\to G$ as above. Then the Lang map $\Lang:G\to G$ defined by $g\mapsto g\Fr(g^{-1})$ is surjective.
\end{theorem}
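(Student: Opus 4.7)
The plan is to bootstrap from the classical Lang theorem (applied to each connected quasi-algebraic quotient $G/H_i$) up to the inverse limit by a compactness argument on finite fibres. Since $G$ is a perfect group scheme over the algebraically closed field $k$, surjectivity of $\Lang:G\to G$ can be checked on $k$-points, and everything commutes with $\varprojlim$ by construction.

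First I would fix $g\in G(k)$ and, for each $i\in I$, denote by $g_i\in (G/H_i)(k)$ its image. Each $G/H_i$ is a connected quasi-algebraic group with a compatible $\Fq$-structure, so the classical Lang theorem gives that the Lang map $\Lang_i:G/H_i\to G/H_i$ is surjective; in particular the fibre
\[
F_i:=\Lang_i^{-1}(g_i)\subset (G/H_i)(k)
\]
is nonempty. Next, $F_i$ is a torsor under $(G/H_i)(k)^{\Fr}=(G/H_i)(\Fq)$, which is a \emph{finite} group since $G/H_i$ is quasi-algebraic over $\Fq$. Hence each $F_i$ is a nonempty finite set. The transition morphisms $G/H_j\onto G/H_i$ (for $H_j\subseteq H_i$) are $\Fr$-equivariant and carry $\Lang_j$ to $\Lang_i$, so they restrict to maps $F_j\to F_i$ making $\{F_i\}_{i\in I}$ into a cofiltered inverse system of nonempty finite sets.

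Now I would invoke the standard fact that a cofiltered inverse limit of nonempty finite sets is nonempty: in each $F_i$ the intersection of the decreasing (over $j\ge i$) images of $F_j$ stabilises to a nonempty subset, so the Mittag-Leffler condition holds and $\varprojlim_i F_i\ne\emptyset$. Since $G(k)=\varprojlim_i(G/H_i)(k)$ and the system $\{F_i\}$ sits inside the inverse system defining $G(k)$, any element of $\varprojlim_i F_i$ corresponds to some $h\in G(k)$ satisfying $\Lang(h)=g$.

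The only step that is even mildly subtle is the compactness/Mittag-Leffler step, but here it is essentially trivial because all of the $F_i$ are \emph{finite}; the connectedness of $G/H_i$ is precisely what guarantees this (via classical Lang) together with the finiteness of $\Fq$-points of a quasi-algebraic group. Finally, since $\Lang$ is a morphism of perfect group schemes over $k=\overline{\F}_q$ and $k$ is algebraically closed, surjectivity on $k$-points (verified above, levelwise and compatibly) implies surjectivity as a morphism of pro-algebraic groups, completing the proof.
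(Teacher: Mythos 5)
Your proof is correct and follows essentially the same argument as the paper: apply classical Lang's theorem to each connected quasi-algebraic quotient $G/H_i$, observe that each fibre of the Lang map is a torsor under the finite group $(G/H_i)(\Fq)$, and conclude via non-emptiness of a cofiltered inverse limit of non-empty finite sets. The extra remarks you include (explicit $\Fr$-equivariance of the transition maps, Mittag-Leffler, reduction to $k$-points) are all fine and merely spell out details the paper leaves implicit.
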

\begin{proof}
    Let $h=(h_i)_{i\in I}\in \varprojlim G/H_i=G$. Each $G/H_i$ is connected. By the finite dimensional Lang's theorem, the Lang map $\Lang:G/H_i\to G/H_i$ is surjective, i.e. $\Lang^{-1}(h_i)\subset G/H_i$ is non-empty. Moreover, $\Lang^{-1}(h_i)\subset G/H_i$ is a right $(G/H_i)(\Fq)$-torsor and hence is a finite non-empty set. The sets $(\Lang^{-1}(h_i))_{i\in I}$ form an inverse system of non-empty finite sets. Hence their inverse limit $\varprojlim\Lang^{-1}(h_i)\subset \varprojlim G/H_i$ is also non-empty. This proves that $\Lang:G\to G$ is surjective as desired. 
\end{proof}

\bibliographystyle{plain}
\bibliography{papers}

@preamble{"\providecommand{\MR}[1]{}"}

@incollection {JKYu[2004],
    AUTHOR = {Yu, Jiu-Kang},
     TITLE = {On the local {L}anglands correspondence for tori},
 BOOKTITLE = {Ottawa lectures on admissible representations of reductive
              {$p$}-adic groups},
    SERIES = {Fields Inst. Monogr.},
    VOLUME = {26},
     PAGES = {177--183},
 PUBLISHER = {Amer. Math. Soc., Providence, RI},
      YEAR = {2009},
   MRCLASS = {11S37 (22E50)},
  MRNUMBER = {2508725},
MRREVIEWER = {Michael M. Schein},
       DOI = {10.1090/fim/026/07},
       URL = {https://doi.org/10.1090/fim/026/07},
}

@article {Boy2013,
    AUTHOR = {Boyarchenko, Mitya},
     TITLE = {Character sheaves and characters of unipotent groups over
              finite fields},
   JOURNAL = {Amer. J. Math.},
  FJOURNAL = {American Journal of Mathematics},
    VOLUME = {135},
      YEAR = {2013},
    NUMBER = {3},
     PAGES = {663--719},
      ISSN = {0002-9327},
   MRCLASS = {20G05 (20G40)},
  MRNUMBER = {3068399},
MRREVIEWER = {Karine Sorlin},
       DOI = {10.1353/ajm.2013.0023},
       URL = {https://doi.org/10.1353/ajm.2013.0023},
}

@incollection {JKYu[2015],
    AUTHOR = {Yu, Jiu-Kang},
     TITLE = {Smooth models associated to concave functions in
              {B}ruhat-{T}its theory},
 BOOKTITLE = {Autour des sch\'{e}mas en groupes. {V}ol. {III}},
    SERIES = {Panor. Synth\`eses},
    VOLUME = {47},
     PAGES = {227--258},
 PUBLISHER = {Soc. Math. France, Paris},
      YEAR = {2015},
   MRCLASS = {14L15 (20E42 20G25)},
  MRNUMBER = {3525846},
MRREVIEWER = {Zinovy Reichstein},
}

@article {Kottwitz[1997],
    AUTHOR = {Kottwitz, Robert E.},
     TITLE = {Isocrystals with additional structure. {II}},
   JOURNAL = {Compositio Math.},
  FJOURNAL = {Compositio Mathematica},
    VOLUME = {109},
      YEAR = {1997},
    NUMBER = {3},
     PAGES = {255--339},
      ISSN = {0010-437X},
   MRCLASS = {20G25 (11S25 14F30 14L05)},
  MRNUMBER = {1485921},
MRREVIEWER = {Guy Rousseau},
       DOI = {10.1023/A:1000102604688},
       URL = {https://doi.org/10.1023/A:1000102604688},
}

@book {Neu1992,
    AUTHOR = {Neukirch, J\"{u}rgen},
     TITLE = {Algebraic number theory},
    SERIES = {Grundlehren der mathematischen Wissenschaften [Fundamental
              Principles of Mathematical Sciences]},
    VOLUME = {322},
      NOTE = {Translated from the 1992 German original and with a note by
              Norbert Schappacher,
              With a foreword by G. Harder},
 PUBLISHER = {Springer-Verlag, Berlin},
      YEAR = {1999},
     PAGES = {xviii+571},
      ISBN = {3-540-65399-6},
   MRCLASS = {11Rxx (11-02 11S15 11S31 14C40)},
  MRNUMBER = {1697859},
MRREVIEWER = {Cornelius Greither},
       DOI = {10.1007/978-3-662-03983-0},
       URL = {https://doi.org/10.1007/978-3-662-03983-0},
}

@article {Gr1965,
    AUTHOR = {Greenberg, Marvin J.},
     TITLE = {Perfect closures of rings and schemes},
   JOURNAL = {Proc. Amer. Math. Soc.},
  FJOURNAL = {Proceedings of the American Mathematical Society},
    VOLUME = {16},
      YEAR = {1965},
     PAGES = {313--317},
      ISSN = {0002-9939},
   MRCLASS = {14.05 (13.10)},
  MRNUMBER = {190169},
MRREVIEWER = {R. C. Hartshorne},
       DOI = {10.2307/2033870},
       URL = {https://doi.org/10.2307/2033870},
}

@article{Dwork1958,
  title={Norm residue Symbol in local number Fields},
  author={Bernard Dwork},
  journal={Abhandlungen aus dem Mathematischen Seminar der Universit{\"a}t Hamburg},
  year={1958},
  volume={22},
  pages={180-190}
}

@article {BG2018,
    AUTHOR = {Bertapelle, Alessandra and Gonz\'{a}lez-Avil\'{e}s, Cristian D.},
     TITLE = {The {G}reenberg functor revisited},
   JOURNAL = {Eur. J. Math.},
  FJOURNAL = {European Journal of Mathematics},
    VOLUME = {4},
      YEAR = {2018},
    NUMBER = {4},
     PAGES = {1340--1389},
      ISSN = {2199-675X},
   MRCLASS = {14G20 (11G25)},
  MRNUMBER = {3866700},
MRREVIEWER = {Jie Shu},
       DOI = {10.1007/s40879-017-0210-0},
       URL = {https://doi.org/10.1007/s40879-017-0210-0},
}

@article {Gr1961,
    AUTHOR = {Greenberg, Marvin J.},
     TITLE = {Schemata over local rings},
   JOURNAL = {Ann. of Math. (2)},
  FJOURNAL = {Annals of Mathematics. Second Series},
    VOLUME = {73},
      YEAR = {1961},
     PAGES = {624--648},
      ISSN = {0003-486X},
   MRCLASS = {14.40},
  MRNUMBER = {126449},
MRREVIEWER = {P. Cartier},
       DOI = {10.2307/1970321},
       URL = {https://doi.org/10.2307/1970321},
}

@InProceedings{Del77,
author="Deligne, Pierre",
title="Applications de la formule des traces aux sommes trigonom{\'e}trigues",
booktitle="Cohomologie Etale",
year="1977",
publisher="Springer Berlin Heidelberg",
address="Berlin, Heidelberg",
pages="168--232",
abstract="Dans cet expos{\'e}, j'explique comment la formule des traces permet de calculer ou d'{\'e}tudier diverses sommes trigonom{\'e}triques et comment, jointe {\`a} la conjecture de Weil, elle peut permettre de les majorer.",
isbn="978-3-540-37507-4"
}

@incollection{Lus06,
    AUTHOR = {Lusztig, G.},
     TITLE = {Character sheaves and generalizations},
 BOOKTITLE = {The unity of mathematics},
    SERIES = {Progr. Math.},
    VOLUME = {244},
     PAGES = {443--455},
 PUBLISHER = {Birkh\"{a}user Boston, Boston, MA},
      YEAR = {2006},
   MRCLASS = {20G05 (20G40 20G99)},
  MRNUMBER = {2181813},
MRREVIEWER = {Bhama Srinivasan},
       DOI = {10.1007/0-8176-4467-9\_12},
       URL = {https://doi.org/10.1007/0-8176-4467-9_12},
}

@misc{BD06,
  doi = {10.48550/ARXIV.MATH/0609769},
  
  url = {https://arxiv.org/abs/math/0609769},
  
  author = {Boyarchenko, Mitya and Drinfeld, Vladimir},
  
  keywords = {Representation Theory (math.RT), Algebraic Geometry (math.AG), FOS: Mathematics, FOS: Mathematics},
  
  title = {A motivated introduction to character sheaves and the orbit method for unipotent groups in positive characteristic},
  
  publisher = {arXiv},
  
  year = {2006},
  
  copyright = {arXiv.org perpetual, non-exclusive license}
}

@book{Mil2006,
author={J.S. Milne},
title={Arithmetic Duality Theorems},
year={2006},
publisher={BookSurge, LLC},
edition={Second},
pages={viii+339},
isbn={1-4196-4274-X}
}

@book {Serre1967,
    AUTHOR = {Serre, Jean-Pierre},
     TITLE = {Local fields},
    SERIES = {Graduate Texts in Mathematics},
    VOLUME = {67},
      NOTE = {Translated from the French by Marvin Jay Greenberg},
 PUBLISHER = {Springer-Verlag, New York-Berlin},
      YEAR = {1979},
     PAGES = {viii+241},
      ISBN = {0-387-90424-7},
   MRCLASS = {12Bxx},
  MRNUMBER = {554237},
}

@article{Serre1960,
     author = {Serre, Jean-Pierre},
     title = {Groupes proalg\'ebriques},
     journal = {Publications Math\'ematiques de l'IH\'ES},
     pages = {5--67},
     publisher = {Institut des Hautes \'Etudes Scientifiques},
     volume = {7},
     year = {1960},
     zbl = {0097.35901},
     mrnumber = {22 #9493},
     language = {fr},
     url = {http://www.numdam.org/item/PMIHES_1960__7__5_0/}
}

@article {Serre1961,
    AUTHOR = {Serre, Jean-Pierre},
     TITLE = {Sur les corps locaux \`a corps r\'{e}siduel alg\'{e}briquement clos},
   JOURNAL = {Bull. Soc. Math. France},
  FJOURNAL = {Bulletin de la Soci\'{e}t\'{e} Math\'{e}matique de France},
    VOLUME = {89},
      YEAR = {1961},
     PAGES = {105--154},
      ISSN = {0037-9484},
   MRCLASS = {10.67 (10.68)},
  MRNUMBER = {142534},
MRREVIEWER = {P. Samuel},
       URL = {http://www.numdam.org/item?id=BSMF_1961__89__105_0},
}

@article {Boy2010,
    AUTHOR = {Boyarchenko, Mitya},
     TITLE = {Characters of unipotent groups over finite fields},
   JOURNAL = {Selecta Math. (N.S.)},
  FJOURNAL = {Selecta Mathematica. New Series},
    VOLUME = {16},
      YEAR = {2010},
    NUMBER = {4},
     PAGES = {857--933},
      ISSN = {1022-1824,1420-9020},
   MRCLASS = {20G05 (20G40)},
  MRNUMBER = {2734333},
MRREVIEWER = {Mikhail\ Viktorovich\ Ignatyev},
       DOI = {10.1007/s00029-010-0036-9},
       URL = {https://doi.org/10.1007/s00029-010-0036-9},
}

@article {Desh:17,
    AUTHOR = {Deshpande, Tanmay},
     TITLE = {Character sheaves on neutrally solvable groups},
   JOURNAL = {Represent. Theory},
  FJOURNAL = {Representation Theory. An Electronic Journal of the American
              Mathematical Society},
    VOLUME = {21},
      YEAR = {2017},
     PAGES = {534--589},
   MRCLASS = {20C33},
  MRNUMBER = {3733826},
MRREVIEWER = {Anne-Marie H. Aubert},
       DOI = {10.1090/ert/510},
       URL = {https://doi.org/10.1090/ert/510},
}

@article {Desh:16,
    AUTHOR = {Deshpande, Tanmay},
     TITLE = {Minimal idempotents on solvable groups},
   JOURNAL = {Selecta Math. (N.S.)},
  FJOURNAL = {Selecta Mathematica. New Series},
    VOLUME = {22},
      YEAR = {2016},
    NUMBER = {3},
     PAGES = {1613--1661},
      ISSN = {1022-1824},
   MRCLASS = {14L17 (14G17 18D10 20G05)},
  MRNUMBER = {3518560},
MRREVIEWER = {Alexander Yu. Luzgarev},
       DOI = {10.1007/s00029-016-0229-y},
       URL = {https://doi.org/10.1007/s00029-016-0229-y},
}

@article {GL,
    AUTHOR = {Gabber, Ofer and Loeser, Fran\c{c}ois},
     TITLE = {{Faisceaux pervers {$l$}-adiques sur un tore}},
   JOURNAL = {Duke Math. J.},
  FJOURNAL = {Duke Mathematical Journal},
    VOLUME = {83},
      YEAR = {1996},
    NUMBER = {3},
     PAGES = {501--606},
      ISSN = {0012-7094},
   MRCLASS = {14F05 (14G20)},
  MRNUMBER = {1390656},
MRREVIEWER = {Abdellah Mokrane},
       DOI = {10.1215/S0012-7094-96-08317-9},
       URL = {https://doi.org/10.1215/S0012-7094-96-08317-9},
}

@article {CR18,
    AUTHOR = {Cunningham, Clifton and Roe, David},
     TITLE = {From the function-sheaf dictionary to quasicharacters of
              {$p$}-adic tori},
   JOURNAL = {J. Inst. Math. Jussieu},
  FJOURNAL = {Journal of the Institute of Mathematics of Jussieu. JIMJ.
              Journal de l'Institut de Math\'{e}matiques de Jussieu},
    VOLUME = {17},
      YEAR = {2018},
    NUMBER = {1},
     PAGES = {1--37},
      ISSN = {1474-7480},
   MRCLASS = {14F05 (14L15 22E50)},
  MRNUMBER = {3742553},
MRREVIEWER = {Alan Koch},
       DOI = {10.1017/S1474748015000286},
       URL = {https://doi.org/10.1017/S1474748015000286},
}

@incollection {Langlands1997,
    AUTHOR = {Langlands, R. P.},
     TITLE = {Representations of abelian algebraic groups},
 BOOKTITLE = {Olga Taussky-Todd: in memoriam},
 PUBLISHER = {Mathematical Sciences Publishers, Berkeley, CA},
   JOURNAL = {Pacific J. Math.},
  FJOURNAL = {Pacific Journal of Mathematics},
      YEAR = {1997},
     PAGES = {231--250},
      ISSN = {0030-8730},
   MRCLASS = {11R39 (22E55)},
  MRNUMBER = {1610871},
MRREVIEWER = {Volker J. Heiermann},
       DOI = {10.2140/pjm.1997.181.231},
       URL = {https://doi.org/10.2140/pjm.1997.181.231},
}

@book {KP23,
    AUTHOR = {Kaletha, Tasho and Prasad, Gopal},
     TITLE = {Bruhat-{T}its theory---a new approach},
    SERIES = {New Mathematical Monographs},
    VOLUME = {44},
 PUBLISHER = {Cambridge University Press, Cambridge},
      YEAR = {2023},
     PAGES = {xxx+718},
      ISBN = {978-1-108-83196-3},
   MRCLASS = {20E42 (11F70 20G25 22E50)},
  MRNUMBER = {4520154},
}

@article {Suz20,
    AUTHOR = {Suzuki, Takashi},
     TITLE = {Grothendieck's pairing on {N}\'eron component groups: {G}alois
              descent from the semistable case},
   JOURNAL = {Kyoto J. Math.},
  FJOURNAL = {Kyoto Journal of Mathematics},
    VOLUME = {60},
      YEAR = {2020},
    NUMBER = {2},
     PAGES = {593--716},
      ISSN = {2156-2261,2154-3321},
   MRCLASS = {14F20 (11G10 11S25 14K05)},
  MRNUMBER = {4094746},
MRREVIEWER = {Alan\ Koch},
       DOI = {10.1215/21562261-2019-0045},
       URL = {https://doi.org/10.1215/21562261-2019-0045},
}

@article {Beg80,
    AUTHOR = {B\'egueri, Lucile},
     TITLE = {Dualit\'e{} sur un corps local \`a{} corps r\'esiduel
              alg\'ebriquement clos},
   JOURNAL = {M\'em. Soc. Math. France (N.S.)},
  FJOURNAL = {M\'emoires de la Soci\'et\'e{} Math\'ematique de France.
              Nouvelle S\'erie},
    NUMBER = {4},
      YEAR = {1980/81},
     PAGES = {121},
      ISSN = {0037-9484},
   MRCLASS = {12B25 (12B20 14K15 14L20)},
  MRNUMBER = {615883},
MRREVIEWER = {Ernst-Wilhelm\ Zink},
}

@article {Kam2009,
    AUTHOR = {Kamgarpour, Masoud},
     TITLE = {Stacky abelianization of algebraic groups},
   JOURNAL = {Transform. Groups},
  FJOURNAL = {Transformation Groups},
    VOLUME = {14},
      YEAR = {2009},
    NUMBER = {4},
     PAGES = {825--846},
      ISSN = {1083-4362,1531-586X},
   MRCLASS = {20G15 (14L10 14L15)},
  MRNUMBER = {2577200},
MRREVIEWER = {Evgenii\ L.\ Bashkirov},
       DOI = {10.1007/s00031-009-9067-8},
       URL = {https://doi.org/10.1007/s00031-009-9067-8},
}

@Inbook{Neu2008,
author={Neukirch, J{\"u}rgen
and Schmidt, Alexander
and Wingberg, Kay},
title={Cohomology of Profinite Groups},
bookTitle={Cohomology of Number Fields},
year={2008},
publisher={Springer Berlin Heidelberg},
address={Berlin, Heidelberg},
pages={3--96},
isbn={978-3-540-37889-1},
doi={10.1007/978-3-540-37889-1_1},
url={https://doi.org/10.1007/978-3-540-37889-1_1}
}
 \end{document}